\title{Bessel SPDEs and renormalised local times}
\author{Henri Elad Altman}
\author{Lorenzo Zambotti}
\address{Sorbonne Universit\'{e} \\
          Laboratoire de Probabilit\'{e}s Statistique et Mod\'{e}lisation\\
          4 Pl. Jussieu, 75005 Paris, France}
\email{eladaltman@lpsm.paris, zambotti@lpsm.paris}
\date{\today}
\newfont{\indic}{bbmss12}
\def\un#1{\hbox{{\indic 1}$_{#1}$}}
\newcommand{\R}{\mathbb R}
\newcommand{\N}{\mathbb N}
\newcommand{\Z}{\mathbb Z}
\newcommand{\E}{\mathbb E}
\newcommand{\gep}{\varepsilon}
\renewcommand{\d}{\, \mathrm{d}}
\theoremstyle{plain}
\newtheorem{thm}{Theorem}[section]
\newtheorem{lm}[thm]{Lemma}
\newtheorem{prop}[thm]{Proposition} 
\newtheorem{cor}[thm]{Corollary} 
\theoremstyle{definition}
\newtheorem{df}[thm]{Definition}
\newtheorem{rk}[thm]{Remark}
\numberwithin{equation}{section}
\begin{document}


\begin{abstract}
In this article, we prove integration by parts formulae (IbPFs) for the laws of Bessel bridges from $0$ to $0$ over the interval $[0,1]$ of dimension smaller than $3$. As an application, we construct a weak version of a stochastic PDE having the law of a one-dimensional Bessel bridge (i.e. the law of a reflected Brownian bridge) as reversible measure, the dimension 1 being particularly relevant in view of applications to scaling limits of dynamical critical pinning models.
We also exploit the IbPFs to conjecture the structure of the stochastic PDEs associated with Bessel bridges of all dimensions smaller than $3$. 
\end{abstract}

\maketitle

\section{Introduction} 

The classical stochastic calculus due to Kiyosi It\^o was originally created as a tool to 
define and solve stochastic differential equations (SDEs). In classical monographs on the subject, 
see e.g. \cite{revuz2013continuous,KS,RW2}, Bessel processes play a prominent role as a fundamental 
example on which the extraordinary power of the theory can be tested. 

Stochastic partial differential equations (SPDEs) were invented around fifty years ago as a natural function-valued analog of SDEs, and are by now a well-established field which is increasingly active
and lively. SPDEs driven by a space-time white noise have recently received much attention, because
they are naturally associated with {\it ultraviolet divergences} and {\it renormalisation}, phenomena
which are now mathematically well-understood in many circumstances using the recent theories of regularity structures
\cite{Hairer2014d,BHZ} and of paracontrolled distributions \cite{GIP}. 

In particular, the classical stochastic calculus for semimartingales and SDEs has no analog for
space-time-white-noise driven SPDEs, despite some early and more recent attempts \cite{Zambotti2006,Bellingeri}, because of the divergences created by the white noise. A partial
substitute is given by the Fukushima stochastic calculus associated with Dirichlet forms \cite{fukushima2010dirichlet,ma2012introduction}, but the formulae that one obtains are often less explicit than one would hope. The marvellous power of the It\^o calculus for the study of fine properties of semimartingales remains without proper analog in genuinely infinite-dimensional processes.

In this paper we discuss a particular class of equations which seems a natural analog of Bessel
processes in the context of SPDEs driven by a space-time white noise. As we explain below, the 
standard approach to Bessel processes does not work at all for these Bessel SPDEs, and we have to
apply a different method, with necessarily weaker results, at least in comparison with the finite-dimensional situation. 
We will rely on Dirichlet forms methods and on integration by parts formulae on path spaces. These will include distributional terms - rather than $\sigma$-finite measures - as in the theory of white noise calculus \cite{hida}.

The processes that we consider have interesting path properties, as it is the case for Bessel processes, but with the enhanced richness of 
infinite-dimensional objects, see e.g. \cite{zambotti2017random} for a recent account. We hope that this work will further motivate the study of
infinite-dimensional stochastic calculus, which is still in its infancy.

\subsection{From Bessel SDEs to Bessel SPDEs}

A squared Bessel process of dimension $\delta\geq 0$ is defined as the unique continuous non-negative process
$(Y_t)_{t\geq 0}$ solving the SDE
\begin{equation}\label{sqB1}
Y_t=Y_0+\int_0^t2\sqrt{Y_s} \d B_s +\delta \,t, \quad t\geq 0, \qquad (\delta\geq 0)
\end{equation}
for $Y_0 \geq 0$, where $(B_t)_{t\geq 0}$ is a standard Brownian motion. Squared Bessel processes enjoy a remarkable additivity property (see \cite{shiga1973bessel} and \eqref{additivity_sqred_bes} below), and play a prominent in several areas of probability theory. For instance, in population dynamics, they arise as the scaling limit of Galton-Watson processes with immigration. On the other hand, they play an important role in the
study of the fine properties of Brownian motion, see e.g. the sections VI.3 and XI.2 in \cite{revuz2013continuous}. Moreover their fascinating behavior at the boundary point 0 can be studied in great detail, see e.g. \cite{zambotti2017random} for a recent account.

Well-posedness of the SDE \eqref{sqB1} satisfied by $(Y_t)_{t\geq 0}$ follows from the classical Yamada-Watanabe theorem \cite[Theorem IX.3.5]{revuz2013continuous}. If we consider the {\it Bessel process} $X_t:=\sqrt{Y_t}$, $t\geq 0$, the situation is more involved. For $\delta>1$, by the It\^o formula, $X$ is solution to
\begin{equation}\label{sde1}
X_t=X_0+\frac{\delta-1}2\int_0^t \frac1{X_s}\d s+ B_t, \quad t\geq 0, \qquad (\delta>1)
\end{equation}
and this equation satisfies pathwise uniqueness and existence of strong solutions since the drift is monotone decreasing, see Prop 3.1 in \cite{zambotti2017random} or section V.48 in \cite{RW2}. On the other hand, for $\delta=1$, $X=\sqrt{Y}$ satisfies
\[
X_t=X_0+L_t+ B_t, \qquad \quad t\geq 0, \qquad (\delta=1)
\]
where $(L_t)_{t\geq 0}$ is continuous and monotone non-decreasing, with $L_0=0$ and
\begin{equation}\label{sde2}
X\geq 0, \qquad \int_0^\infty X_s \d L_s=0.
\end{equation}
In other words $X$ is a reflecting Brownian motion, and the above equation has a unique solution by the Skorokhod Lemma \cite[Lemma VI.2.1]{revuz2013continuous}.

For $\delta\in (0,1)$, the situation is substantially more difficult and it turns out that the relation \eqref{sde1} is not valid anymore in this regime. 
One can show, see e.g. \cite[Proposition 3.12]{zambotti2017random}, that 
$X$ admits {\it diffusion local times}, namely continuous processes $(\ell^a_t)_{t\geq 0,a\geq 0}$ such that
\begin{equation}\label{otfo}
\int_0^t \varphi(X_s)\d s =\int_0^\infty \varphi(a) \, \ell^a_t \, a^{\delta-1} \d a,
\end{equation}
for all Borel $\varphi:\mathbb{R}_+\to\mathbb{R}_+$, and that $X$ satisfies
\begin{equation}\label{sde3}
X_t=X_0+\frac{\delta-1}2\int_0^\infty\frac{\ell^a_t-\ell^0_t}a \, a^{\delta-1}\d a+B_t, \quad t\geq 0, \qquad (0<\delta<1).
\end{equation}
Note that by the occupation time formula \eqref{otfo} we have
\[
\begin{split}
&\int_0^\infty\frac{\ell^a_t-\ell^0_t}a \, a^{\delta-1}\d a = \lim_{\varepsilon\downarrow 0}
\int_\varepsilon^\infty\frac{\ell^a_t-\ell^0_t}a \, a^{\delta-1}\d a =
\\ & = \lim_{\varepsilon\downarrow 0} \left(\int_0^t \un{(X_s\geq\varepsilon)} \frac1{X_s}\d s 
- \ell^0_t \int_\varepsilon^\infty a^{\delta-2}\d a\right)
\end{split}
\]
and in the latter expression both terms diverge as $\varepsilon\downarrow 0$, while the difference
converges since $|\ell^a_t-\ell^0_t|\lesssim a^{1-\frac\delta2-\kappa}$ for any $\kappa>0$: this is why we speak of {\it renormalised local times}. 

The formula \eqref{sde3} is not really an SDE, and to our knowledge one cannot (so far) characterize $X$ 
as the unique process satisfying this property, unless one manages to prove that $X^2$ is a solution
to \eqref{sqB1}. 
We stress again that the relation between \eqref{sqB1} and \eqref{sde1}-\eqref{sde2}-\eqref{sde3} is based on It\^o's stochastic
calculus. 

In a series of papers \cite{Z01,zambotti2002integration,zambotti2003integration,zambotti2004occupation} the second author of this article studied a class of stochastic partial 
differential equations (SPDEs) with analogous properties. For a parameter $\delta>3$ the equation, that we call {\it Bessel SPDE}, is
\begin{equation}\label{spde>3}
\left\{ \begin{array}{ll}
{\displaystyle
\frac{\partial u}{\partial t}=\frac 12
\frac{\partial^2 u}{\partial x^2}
 + \frac {\kappa(\delta)}{2 \, u^3} 
 + \xi 
}
\\ \\
u(0,\cdot)=u_0, \ u(t,0)=u(t,1)=0
\end{array} \right.
\qquad \qquad (\delta>3)
\end{equation}
where $u\geq 0$ is continuous and $\xi$ is a space-time white noise on $\mathbb{R}_+\times[0,1]$, and
\begin{equation}\label{kappadelta}
\kappa(\delta) := \frac{(\delta-3)(\delta-1)}{4}.
\end{equation}
As $\delta\downarrow 3$, the solution to \eqref{spde>3} converges to the solution of the Nualart-Pardoux equation \cite{nualart1992white}, namely the random obstacle problem
\begin{equation}\label{spde=3}
\left\{ \begin{array}{ll}
{\displaystyle
\frac{\partial u}{\partial t}=
\frac 12\frac{\partial^2 u}{\partial x^2}
 + \eta+ \xi }
\\ \\
u(0,\cdot)=u_0, \ u(t,0)=u(t,1)=0
\\ \\
u\geq 0, \ d\eta\geq 0, \
\int_{\mathbb{R}_+\times[0,1]} u\, \d\eta=0,
\end{array} \right.
\qquad \qquad (\delta=3)
\end{equation}
where $\eta$ is a Radon measure on $]0,\infty[\,\times\,]0,1[$. The unique invariant measure of \eqref{spde>3} for $\delta>3$, respectively \eqref{spde=3}, is the Bessel bridge of dimension $\delta$, resp. 3. In other words, the invariant measure has the law
of $(X_t)_{t\in[0,1]}$ conditioned to return to 0 at time 1, where $X$ solves \eqref{sde1} with $X_0=0$ and $\delta>3$, respectively $\delta=3$.
Equation \eqref{spde=3} also describes the fluctuations of an effective $(1+1)$ interface model near a wall \cite{funakiolla,funakistflour} and also arises as the scaling limit of several weakly asymmetric interface models, see \cite{etheridge2015scaling}. 

While \eqref{spde>3} for $\delta>3$ is the analog of \eqref{sde1} for $\delta>1$, \eqref{spde=3} is the 
analog of \eqref{sde2}. The analogy can be justified in terms of scaling invariance: the equations
\eqref{sde1} and \eqref{sde2} are invariant (in law) under the rescaling $X_t\mapsto \lambda^{-1} X_{\lambda^2t}$ for $\lambda>0$, while \eqref{spde>3} and \eqref{spde=3} are invariant under $u(t,x)\mapsto\lambda^{-1} u(\lambda^4t,\lambda^2x)$ (apart from the fact that the space interval changes
from $[0,1]$ to $[0,\lambda^{-2}]$).

It has been an open problem for over 15 years to complete the above picture. Namely, what is an SPDE whose invariant measure is the Bessel bridge of dimension $\delta\in\,]0,3[$  ? Is it an SPDE analogue of \eqref{sde3} ? 

We stress that equations \eqref{spde>3} and \eqref{spde=3} enjoy nice properties (pathwise uniqueness, continuity with respect to initial data, the Strong Feller property) because
of the {\it dissipative}, namely monotone non-increasing, character of the drift. This is however 
true only as long as the coefficient $\kappa(\delta)$ is positive, and fails 
for $\delta\in\,]1,3[$. In the regime $\delta<3$, we shall see that even the notion of solution
becomes highly non-trivial, as for Bessel processes in the regime $\delta<1$. The nice properties mentioned above may still be true but the known 
techniques become ineffective.

This problem is particularly interesting for $\delta=1$, which corresponds to the reflecting Brownian bridge as an invariant measure. Indeed, the reflecting Brownian bridge arises as the scaling limit of critical pinning models, see \cite{dgz}, \cite[Chapter 15.2]{funakistflour} and \cite{fattler2016construction,grothaus18feller}.
Dynamical pinning models are believed to have a scaling limit, which would be an infinite-dimensional diffusion having the law of a reflecting Brownian motion as reversible measure. What kind of SPDE that limit should satisfy has however remained a very open question so far. Another application of a Bessel SPDE corresponding to $\delta=1$ could be the description of the scaling limits of the spin flip dynamics considered in \cite{caputo2008approach}. 

Note that the one-dimensional trick of considering a power of $u$, in this case for instance $v:=u^4$, in order to find a more tractable SPDE fails
because one obtains rather frightening equations of the form
\[
\frac{\partial v}{\partial t}=\frac 12
\frac{\partial^2 v}{\partial x^2}
 + 2\,\kappa(\delta) - \frac3{8 \, v} \, : \left(\frac{\partial v}{\partial x}\right)^2: 
 + 4v^{\frac34}\, \xi 
\]
where the $: \ :$ notation denotes a KPZ-type renormalisation. Even the theory of regularity structures 
\cite{Hairer2014d} does not cover this kind of equations,
due to the non-Lipschitz character of the coefficients. One could hope that a Yamada-Watanabe result could be proved for this class of equations;
it is an inspiring fact that the exponent $\frac34$ in the noise-term is known to be critical for pathwise uniqueness of parabolic SPDEs (without the KPZ-type term), see \cite{mytnik1,mytnik2}. This approach is, at present, completely out of reach.

Therefore, in this paper, rather than tackling these difficulties, we answer the above questions by exploiting the specific, very nice structure underlying Bessel processes. More precisely, we derive integration by parts formulae for the law of Bessel bridges of dimension $\delta<3$. These formulae turn out to involve the laws of pinned Bessel bridges (or, more precisely, the measures $\Sigma^\delta_r(\,\cdot\,|\,a)$ defined in \eqref{Sigma} below) which should correspond to the local times of the solution $u$ to our would-be SPDEs, that is the process $(\ell^a_{t,x})_{a \geq 0}$ defined, at least formally, by
\begin{equation}
\label{otf}
\int_0^t \varphi(u(s,x))\d s =\int_0^\infty \varphi(a) \, \ell^a_{t,x} \, a^{\delta-1} \d a,
\end{equation}
for all Borel $\varphi:\mathbb{R}_+ \to \mathbb{R}_+$. Some explicit computations on the measures $\Sigma^\delta_r(\,\cdot\,|\, a)$ suggest that this process should moreover have a vanishing first-order derivative at $0$, that is
\begin{equation}
\label{vanishing_derivative}
\frac{\partial}{\partial a} \ell^{a}_{t,x}\, \biggr\rvert_{a=0} = 0, \quad \quad t \geq 0, \quad x \in (0,1). 
\end{equation}
Finally, the integration by parts formulae that we find enable us to identify the structure of the corresponding Bessel SPDEs. Thus,  in view of these formulae, for $1<\delta<3$, the SPDE should have the form
\begin{equation}\label{1<spde<3}
\frac{\partial u}{\partial t}=\frac 12
\frac{\partial^2 u}{\partial x^2}
 + \frac {\kappa(\delta)}{2}\frac\partial{\partial t}\int_0^\infty \frac1{a^3}\left(\ell^a_{t,x}-\ell^0_{t,x}\right)  a^{\delta-1}\d a  + \xi, \qquad (1<\delta<3).
\end{equation}
Then \eqref{1<spde<3} is the SPDE analog of \eqref{sde3}. On the other hand, for $\delta=1$, we find that the SPDE should be of the form
\begin{equation}\label{spde=1}
\frac{\partial u}{\partial t}=\frac 12
\frac{\partial^2 u}{\partial x^2}
 - \frac{1}{8} \frac\partial{\partial t}\frac{\partial^{2}}{\partial a^{2}} \, \ell^{a}_{t,x}\, \biggr\rvert_{a=0} + \xi, \qquad (\delta=1),
\end{equation}
while for $0<\delta<1$
\begin{equation}\label{0<spde<1}
\begin{split}
& \frac{\partial u}{\partial t}=\ \frac 12
\frac{\partial^2 u}{\partial x^2} + \xi
\qquad\qquad\qquad\qquad\qquad\qquad\qquad (0<\delta<1)
\\ & + \frac {\kappa(\delta)}{2}\frac\partial{\partial t}\int_0^\infty \frac1{a^3}\left(\ell^a_{t,x}-\ell^0_{t,x}-\frac{a^2}2\frac{\partial^{2}}{\partial a^{2}} \, \ell^{a}_{t,x}\, \biggr\rvert_{a=0}\right)  a^{\delta-1}\d a.
\end{split}
\end{equation}
In \eqref{1<spde<3}, as in \eqref{sde3}, we have a Taylor expansion at order 0 of the local times functions 
$a\mapsto\ell^a$. By contrast, equations \eqref{spde=1} and \eqref{0<spde<1} have no analog in the context of 
one-dimensional Bessel processes. In \eqref{0<spde<1} the Taylor expansion is at order 2, while \eqref{spde=1} is 
a limit case, like \eqref{spde=3}. 

In all the above, we say "the SPDE should have the form..." since existence and uniqueness of solutions to such equations are still
open problems, as we discuss below. In the case $\delta=1$, we show below that our integration by parts formula and Dirichlet
forms techniques allow to construct a Markov process $(u_t)_{t \geq 0}$ with the reflected Brownian bridge as reversible measure, and satisfying a modified version of equation \eqref{spde=1} above, namely
\begin{equation}\label{formal1}
\frac{\partial u}{\partial t}=\frac 12
\frac{\partial^2 u}{\partial x^2}
 - \frac{1}{4} \, \underset{\epsilon \to 0}{\lim} \, \rho''_{\epsilon}(u) + \xi, 
 \qquad (\delta=1)
\end{equation}
where $\rho_{\epsilon}(x) = \frac{1}{\epsilon} \rho(\frac{x}{\epsilon})$ is a smooth approximation of the Dirac measure at $0$, see Theorem \ref{fukushima_decomposition} for the precise statements. Note that
\eqref{formal1} is a weak form of \eqref{spde=1}, since the former does not require the existence of the local
time process $\ell^a$. Similar arguments allow us to treat the case $\delta=2$: this will be done in a forthcoming article.
In the cases $\delta\in\,]0,3[\,\setminus\{1,2\}$, we do not know how to prove that the associated Dirichlet form is well-defined and
associated with a Markov process (namely that it is closable and quasi-regular); once this is done, our integration by parts formulae
allow to show that the associated Markov process satisfies \eqref{1<spde<3} or \eqref{0<spde<1} according to the value of $\delta$.

Although they seem quite different, all the above SPDEs can be written in a unified way as follows. 
We introduce for $\alpha\in\R$ the following distributions on $[0,\infty)$
\begin{itemize}
\item if $ \alpha = -k$ with $k \in \mathbb{N}$, then 
\[
\langle \mu_{\alpha}, \varphi \rangle := (-1)^{k} \varphi^{(k)}(0), \qquad \forall \,
\varphi \in C^\infty_0([0,\infty))
\]
\item else, 
\[
\langle \mu_{\alpha} , \varphi \rangle := \int_{0}^{+ \infty} \left(
\varphi(a) - \sum_{0\leq j\leq 
-\alpha} \frac{a^{j}}{j!} \, \varphi^{(j)}(0) \right) \frac{a^{\alpha -1}}{\Gamma(\alpha)} \d a, \quad \forall \,
\varphi \in C^\infty_0([0,\infty)).
\]
\end{itemize}
Note that, for all $\alpha \in \mathbb{R}$, $\mu_\alpha$ coincides with the distribution $\frac{x_{+}^{\alpha-1}}{\Gamma(\alpha)}$ considered in Section 3.5 of \cite{gelfand1964generalized}. Then for all $\varphi \in C^\infty_0([0,\infty))$, the map $\alpha\mapsto\langle \mu_{\alpha} , \varphi \rangle$ is analytic. Moreover, for $\delta>3$, the non-linearity in \eqref{spde>3} can be expressed by the occupation time formula \eqref{otf} and the definition of $\mu_\alpha$ as
\[\frac{\kappa(\delta)}2\int_0^t\frac1{(u(s,x))^3}\d s = \frac{\kappa(\delta)}2\int_0^\infty \frac1{a^3} \, \ell^a_{t,x}\, a^{\delta-1}\d a = \frac{\kappa(\delta)\,\Gamma(\delta-3)}2\langle \mu_{\delta-3},\ell^{\boldsymbol{\cdot}}_{t,x}\rangle,\]
which, by \eqref{kappadelta}, we can in turn rewrite as
\begin{equation}
\label{unified_expr_nonlinearity}
\frac{\Gamma (\delta)}{8(\delta-2)} \langle \mu_{\delta-3},\ell^{\boldsymbol{\cdot}}_{t,x}\rangle,
\end{equation}
an expression which, at least formally, makes sense for any $\delta \in (0,\infty) \setminus \{2\}$. 
Note moreover that the singularity at $\delta=2$ is compensated by the cancellation of $\langle \mu_{\delta-3},\ell^{\boldsymbol{\cdot}}_{t,x}\rangle$ at $\delta=2$ as a consequence of \eqref{vanishing_derivative}.
Then, the expression \eqref{unified_expr_nonlinearity} encapsulates, in a unified way, the non-linearities of \eqref{spde=3}-\eqref{1<spde<3}-\eqref{spde=1}-\eqref{0<spde<1}. In particular, for $\delta=3$, it equals $\frac{1}{4}  \ell^0_{t,x}$, which is consistent with the results about the structure of the reflection
measure $\eta$ in \eqref{spde=3} proved in \cite{zambotti2004occupation} and showing that a.s. 
\[
\eta([0,t]\times{\rm d}x) = \frac14\, \ell^0_{t,x}  \d x.
\]
At least formally, the $\delta$-Bessel SPDEs for $\delta <3$ correspond to the unique analytic continuation of the $\delta$-Bessel SPDEs for $\delta \geq 3$. This is justified by considering the corresponding integration by parts formulae on a specific set of test functions, where every term depends in an analytic way on $\delta$, see \eqref{exp_fst_part_ibpf0} below.

\subsection{Integration by parts formulae for the laws of Bessel bridges}

Integration by parts plays a fundamental role in analysis, and most notably in stochastic analysis. For instance, it lies at the core of Malliavin Calculus and the theory of Dirichlet forms, see e.g. \cite{nualart,fukushima2010dirichlet,ma2012introduction}. 

While it is relatively easy in finite dimension, where the standard rules of calculus apply, obtaining integration by parts formulae (IbPFs for short) for measures on infinite-dimensional spaces can be a difficult task, one of the main reasons being the absence of Lebesgue measure in that context. The most celebrated example is the IbPF associated with Brownian motion, or its corresponding bridge, on the interval $[0,1]$, which reads
\[ E \left[\partial_{h} \Phi (B) \right] = -  E \left[\langle h'', B \rangle \, \Phi (B) \right], \] 
for all Fr\'{e}chet differentiable $\Phi : L^{2}(0,1) \to \mathbb{R}$ and all $h \in C^{2}_{c}(0,1)$, where $ \langle \cdot, \cdot \rangle$ denotes the canonical scalar product in $L^2(0,1)$. This formula follows for instance from the quasi-invariance property of the Wiener measure on $[0,1]$ along the Cameron-Martin space, 
by differentiating at $\gep=0$ the formula
\[
E[ \Phi(B+\gep h)] = E\left[\Phi(B)\, \exp\left(-\gep\langle h'',B\rangle -\frac{\gep^2}2\|h'\|_{L^2(0,1)}^2\right) \right].
\]

In \cite{zambotti2002integration}, the second author exploited the relation between the law of the 
Brownian bridge and the law $P^3$ of the $3$-dimensional Bessel bridge (also known as the normalised Brownian excursion) on $[0,1]$ to deduce an IbPF for the latter measure; other proofs were given later, see e.g. \cite{FuIs,zambotti2017random}. In \cite{zambotti2003integration}, exploiting an absolute continuity relation with respect to the $3$-dimensional Bessel bridge, the second author obtained IbPFs for the law $P^\delta$ of 
Bessel bridges of dimension $\delta>3$. Put in a nutshell, these formulae read as follows:
\begin{equation}
\label{ibpf_larger_three}
E^{\delta} \left[\partial_{h} \Phi (X) \right] + E^{\delta}  \left[\langle h'', X \rangle \, \Phi (X) \right] = - \kappa(\delta) \, E^{\delta}  \left[\langle h, X^{-3} \rangle \, \Phi (X) \right]
\end{equation}
for all $\delta >3$, and
\begin{equation}\label{ibpf_three}
\begin{split}
& E^{3} \left[\partial_{h} \Phi (X) \right] +  E^{3}  \left[\langle h'', X \rangle \, \Phi (X) \right] = 
\\ & = - \int_{0}^{1} \d r \, \frac{h_r}{\sqrt{2\pi r^3(1-r)^3}} \, E^{3}  \left[\Phi (X) \, | \, X_{r}=0\right], 
\end{split}
\end{equation}
where $\Phi$ and $h$ are as above. Here, for all $\delta >0$, $E^{\delta}$ denotes the expectation with respect to the law $P^{\delta}$, on the space of continuous real-valued functions on $[0,1]$, of the $\delta$-dimensional Bessel bridge from $0$ to $0$ over the interval $[0,1]$, and $\kappa(\delta)$ is defined in \eqref{kappadelta}.
Note that while $\kappa(\delta) > 0$ for $\delta > 3$, $\kappa$ vanishes at $\delta=3$, the dimension corresponding to the Brownian excursion. At the same time, the quantity 
$\langle |h|, X^{-3} \rangle $
is integrable with respect to $P^{\delta}$ for $\delta > 3$, but is non-integrable with respect to $P^{3}$ for $h$ that is not identically $0$. Thus, when $\delta \searrow 3$, the right-hand side of \eqref{ibpf_larger_three} is an indeterminate form which turns out to converge to the non-trivial quantity in the right-hand side of \eqref{ibpf_three}; this can be seen, at least for Fr\'echet differentiable $\Phi$, by comparing the
left-hand sides of the two formulae and by using continuity of the map $\delta\mapsto P^\delta$. Formula \eqref{ibpf_three} also possesses a geometric-measure theory interpretation as a Gauss-Green formula in an infinite-dimensional space, the second term in the right-hand side corresponding to a boundary term (see Chapter 6.1.2 in \cite{zambotti2017random}). 

What can we say for Bessel bridges of dimension $\delta <3$ ? In such a regime, the techniques used in \cite{zambotti2003integration}, based on absolute continuity relations with the Brownian excursion as well as monotonicity arguments, fall apart. Indeed, when $\delta \in (1,3)$, $\kappa(\delta) <0$, so the required monotonicity properties do not hold anymore, while for $\delta <2$ the absolute continuity relations fail to exist. Hence, the problem of finding IbPFs for the measures $P^{\delta}$, when $\delta <3$, has remained open until now, excepted for the value $\delta=1$, corresponding to the reflected Brownian bridge, for which some (strictly weaker) IbPFs have been obtained, see \cite{zambotti2005integration} for the case of the reflected Brownian motion, \cite{grothaus2016integration} for the case of a genuine bridge,
and Remark \ref{weaker} below for a discussion. 

\subsection{Outline of the results}

Here and below, let $C([0,1]) := C([0,1], \mathbb{R})$ be the space of continuous real-valued functions on $[0,1]$.  In this article, we obtain IbPFs for the laws $P^{\delta}$ of Bessel bridges of dimension $\delta \in (0,3)$ from $0$ to $0$ over $[0,1]$. Our formulae hold for a large class of functionals $\Phi: C([0,1]) \to \mathbb{R}$. More precisely, we consider linear combinations of functionals of the form 
\begin{equation}\label{suitable} \Phi(\zeta) =  \exp(- \langle m, \zeta^{2} \rangle), \quad \zeta \in C([0,1]), \end{equation}
with $m$ a finite Borel measure on $[0,1]$, and where $\langle m, \zeta^{2} \rangle := \int_0^1 \zeta_t^{2} \, m({\rm d} t)$. We prove that these functionals satisfy IbPFs for the laws $P^{\delta}$, for all $\delta >0$. Our method is based on deriving semi-explicit expressions for quantities of the form
\[  E^{\delta}  \left[\Phi (X) \right] \qquad
\text{and} \qquad
E^{\delta} \left[\Phi (X) \, | \, X_{r} = a\right], \quad a \geq 0,  \, r \in (0,1), \]
using solutions to some second-order differential equations, and exploiting the nice computations done in Chapter XI of  \cite{revuz2013continuous}. The fundamental property enabling these computations is the additivity property of the squared Bessel processes, which in particular implies that both of the quantities above factorize in a very specific way, see the expression \eqref{bridge2} below. As a consequence, for functionals as above, all the IbPFs for $P^{\delta}$, $\delta \geq 3$ are just multiples of a single differential relation which does not depend on $\delta$ (see Lemma \ref{thm} below), the dependence in $\delta$ entering only through the multiplying constant which involves some $\Gamma$ values. When $\delta \geq 3$, expressing these $\Gamma$ values as integrals, and performing a change of variable, we retrieve the formulae already obtained in \cite{zambotti2002integration} and \cite{zambotti2003integration}. On the other hand, when $\delta <3$, one of the $\Gamma$ values appearing is negative, so we cannot express it using the usual integral formula, but must rather use \textit{renormalised} integrals. 

As a result, when $\delta \in (1,3)$, the IbPFs can be written
\begin{equation}
\label{new_ibpf_13}
\begin{split}
& E^{\delta} (\partial_{h} \Phi (X) ) + E^{\delta} (\langle h '' , X \rangle \, \Phi(X) ) = \\
&=-\kappa(\delta)\int_{0}^{1}  
 h_{r}  \int_0^\infty a^{\delta-4} \Big[ \Sigma^\delta_r \left(\Phi (X) \, | \, a\right) - \Sigma^\delta_r  \left(\Phi (X) \, | \, 0\right) \Big]
  \d a \d r,
\end{split}
\end{equation}  
where, for all $a \geq 0$, $\Sigma^\delta_r \left({\rm d}X \, | \, a\right)$ is a measure on $C([0,1])$ proportional to the law of the Bessel bridge conditioned to hit $a$ at $r$, see \eqref{Sigma}. Thus, the left-hand side is the same as for \eqref{ibpf_larger_three} and \eqref{ibpf_three}, but the right-hand side now  contains Taylor remainders at order $0$ of the functions $a \mapsto \Sigma^\delta_r \left(\Phi (X) \, | \, a\right)$. When $\delta \in (0,1)$, this renormalisation phenomenon becomes even more acute. Indeed, in that case, the IbPFs are similar to \eqref{new_ibpf_13}, but the right-hand side is replaced by
\begin{equation}
\label{new_ibpf_01} 
-\kappa(\delta)\int_{0}^{1}  
 h_{r}  \int_0^\infty a^{\delta-4} \left[\varphi(a)-\varphi(0)-\frac{a^2}2\varphi''(0)
 \right]   \d a \d r, 
\end{equation}
where $\varphi(a):=\Sigma^\delta_r  \left(\Phi (X) \, | \, a\right)$, and where we see Taylor remainders at order 2 appearing. An important remark is that the terms of order 1 vanish
\[
\varphi'(0)=\left. \frac{\rm d}{{\rm d} a} \Sigma^\delta_r \left(\Phi (X) \, | \, a \right) \, \right|_{a=0} = 0 , \quad r \in (0,1),
\]
so we do not see them in the above Taylor remainders.  Finally, in the critical case $\delta=1$, we obtain the fomula
\begin{equation}
\label{new_ibpf_1}
\begin{split}
E^{1} (\partial_{h} \Phi (X) ) + E^{1} (\langle h '' , X \rangle \, \Phi(X) ) = 
\frac{1}{4} \int_{0}^{1} h_{r} \, \frac{{\rm d}^{2}}{{\rm d} a^{2}} \, \Sigma^1_r (\Phi(X) \, | \, a) \, \biggr\rvert_{a=0}  \d r. 
\end{split}
\end{equation}

The IbPFs are stated in Theorem \ref{statement_ibpf} below. One important, expected feature is the transition that occurs at the critical values $\delta=3$ and $\delta=1$. Another important but less expected feature is the absence of transition at $\delta =2$, as well as the related remarkable fact that the functions $a \mapsto  \Sigma^\delta_r \left(\Phi (X) \, | \, a\right)$ are, for all $r \in (0,1)$, smooth functions in $a^{2}$, so that all their odd-order derivatives vanish at $0$. This is the reason why there only ever appear derivatives of even order in our formulae. An objection to this observation might be that the class of functionals \eqref{suitable} is too restrictive. However, in a forthcoming article, we will show that the IbPFs obtained in the present article still hold for a class of very general functionals. In particular, vanishing of first-order derivatives at $a=0$ can be established for $ a \mapsto \Sigma^\delta_r \left(\Phi (X) \, | \, a\right)$ 
for any $\Phi \in C^{1}_{b}(L^{2}(0,1))$, which confirms the absence of transition at $\delta=2$ observed in this article. Finally, note that all the IbPFs above can be written in a unified way, by re-expressing the last term as 
\[-\frac{\Gamma (\delta)}{4(\delta-2)} \int_0^1 \langle \mu_{\delta-3} ,\Sigma^\delta_r (\Phi(X) \, | \, \cdot\,) \rangle \]
in analogy with \eqref{unified_expr_nonlinearity}.
The latter formula bears out the idea that the new IbPFs for Bessel bridges of dimension $\delta<3$ are given by the unique analytic continuation of those for $\delta \geq 3$, at least for suitable test functionals $\Phi$ as in \eqref{suitable}.

The IbPFs \eqref{new_ibpf_13}, \eqref{new_ibpf_01} and \eqref{new_ibpf_1} above suggest that the gradient dynamics associated with the laws of Bessel bridges of dimension $\delta <3$ should be given by the SPDEs \eqref{1<spde<3}, \eqref{0<spde<1} and \eqref{spde=1} respectively. Note that, in the case $\delta \geq 3$, the SPDEs had been solved in \cite{nualart1992white,zambotti2003integration} using pathwise techniques, and many fine properties of the solution had been studied, such as their hitting properties (see \cite{dalang2006hitting}), or the existence of occupation densities (see \cite{zambotti2004occupation}). By contrast, in the case $\delta < 3$, the SPDEs \eqref{1<spde<3}, \eqref{spde=1} and \eqref{0<spde<1} do not yet seem to possess any strong notion of solution, and essentially lie outside the scope of any existing theory of SPDEs. However, in this article, for $\delta=1$, using Dirichlet form techniques, and thanks to the IbPF \eqref{new_ibpf_1} for the reflecting Brownian bridge, we are able to construct a weak version of the associated SPDE in the stationary regime. Thus, the dynamics for $\delta=1$ can be described by \eqref{formal1}, which is a weaker version of \eqref{spde=1}. We also prove (see Theorem \ref{dist_res} below) that the corresponding Markov process does not coincide with the process associated with the absolute value of the solution to the stochastic heat equation. A similar construction can be implemented in the case $\delta=2$: this will be done in the forthcoming article \cite{henri2018bessel}. The approach using Dirichlet forms was already used in Robert Vo{\ss}hall's thesis \cite{vosshallthesis}, which provided a construction of the Markov process for $\delta=1$, but not the SPDE.   

\medskip
The article is organized as follows: in Section \ref{sect_prelude} we address a toy-model consisting of a family of measures on $\mathbb{R}_{+}$, hence much simpler than the laws of Bessel bridges, but displaying a similar renormalisation phenomenon at the level of the IbPFs. In Section \ref{sect_sqred_bessel} we recall and prove some useful facts on the laws of squared Bessel processes, Bessel processes, and their bridges. In Section \ref{sect_ibpf_exp_func}, we state and prove the IbPFs for the laws of Bessel bridges. 
The dynamics associated with the law of a reflected Brownian bridge is constructed and studied in Section \ref{sect_Dirichlet}. Finally, in Section \ref{sect_conj_dynamics}, we justify our conjectures \eqref{1<spde<3} \eqref{spde=1} and \eqref{0<spde<1} for the $\delta$-Bessel SPDEs for $\delta<3$, and we formulate some additional related conjectures.

\medskip
{\bf Acknowledgements.}
The arguments used in Prop \ref{closability} below to show quasi-regularity of the form associated with the law of a reflected Brownian bridge were communicated to us by Rongchan Zhu and Xiangchan Zhu, whom we warmly thank. The first author is very grateful to Jean-Dominique Deuschel, Tal Orenshtein and Nicolas Perkowski for their kind invitation to TU Berlin, and for very interesting discussions. We also thank Giuseppe Da Prato for very useful discussion and for his kindness and patience in answering our questions. The authors would finally like to thank the Isaac Newton Institute for Mathematical Sciences for hospitality and support during the programme "Scaling limits, rough paths, quantum field theory" when work on this paper was undertaken: this work was supported by EPSRC grant numbers EP/K032208/1 and EP/R014604/1. The second author gratefully acknowledges support by the Institut Universitaire de France and the project of the Agence Nationale de la Recherche ANR-15-CE40-0020-01 grant LSD.

\section{A prelude}
\label{sect_prelude}

In this section we consider a toy model consisting of a family of Schwartz distributions on $\mathbb{R}_{+}$ satisfying nice integration by parts formulae. The content of this section is classical (see e.g. Section 3.5 of \cite{gelfand1964generalized}), but it will serve as a useful finite-dimensional example for the theory to come.
For $\alpha \geq 0$, we set
\[ \mu_{\alpha}({\rm d}x) = \frac{x^{\alpha - 1}}{\Gamma(\alpha)} \d x, \quad \alpha>0, \qquad
\mu_{0} = \delta_{0}, \] 
where $\delta_{0}$ denotes the Dirac measure at $0$. 
A simple change of variable yields the Laplace transform of the measures $\mu_{\alpha}, \, \alpha \geq 0$
 \begin{equation}
\label{laplace0}
 \int_0^{+\infty} \exp ( - \lambda x) \, \mu_{\alpha}({\rm d}x) = \lambda^{-\alpha}, \qquad \lambda>0, \ \alpha \geq  0. 
\end{equation}

It turns out that the family of measures $(\mu_{\alpha})_{\alpha \geq 0}$ can be extended in a natural way to a family of \textit{distributions} $(\mu_{\alpha})_{\alpha \in \mathbb{R}}$ .
We first define the appropriate space of test functions on $[0,\infty)$.
\begin{df}
Let $S([0,\infty))$ be the space of $C^\infty$ functions $\varphi: [0,\infty) \to \mathbb{R}$ such that, for all $k, l \geq 0$, there exists $C_{k,\ell} \geq0$ such that
\[
 | \varphi^{(k)} (x)  | \, x^{\ell} \leq C_{k,\ell}, \qquad \forall x \geq 0.
\]
\end{df}

For $\alpha < 0$, we will define $\mu_{\alpha}$ as a distribution, using a \textit{renormalisation} procedure based on Taylor polynomials. To do so, for any smooth function $\varphi: \mathbb{R}_{+} \to \mathbb{R}$, for all $n \in \mathbb{Z}$, and all $x \geq 0$, we set
\begin{equation}\label{eq:taylor} \mathcal{T}^{\,n}_{x} \varphi := \varphi(x) - \sum_{0\leq j\leq n} \frac{x^{j}}{j!} \, \varphi^{(j)}(0). \end{equation}
In words, if $n \geq 0$ then $\mathcal{T}^{\,n}_{x} \varphi$ is the Taylor remainder based at $0$, of order $n+1$, of the function $\varphi$, evaluated at $x$; if $n<0$ then $\mathcal{T}^{\,n}_{x} \varphi$ is simply the value of $\varphi$ at $x$.

\begin{df}
\label{def_mu_alpha}
For $\alpha < 0$, we define the distribution  $\mu_{\alpha}$ as follows
\begin{itemize}
\item if $ \alpha = -k$ with $k \in \mathbb{N}$, then 
\begin{equation}
\label{mu_neg_int} 
\langle \mu_{\alpha}, \varphi \rangle := (-1)^{k} \varphi^{(k)}(0), \qquad \forall \,
\varphi \in \mathcal{S}([0,\infty))
\end{equation}
\item if $ - k - 1 < \alpha < -k$ with $k \in \mathbb{N}$, then 
\begin{equation} 
\label{mu_neg_delta}
\langle \mu_{\alpha} , \varphi \rangle := \int_{0}^{+ \infty} \mathcal{T}^{\,k}_{x} \varphi \, \frac{x^{\alpha -1}}{\Gamma(\alpha)} \d x, \qquad \forall \,
\varphi \in \mathcal{S}([0,\infty)).
\end{equation}
\end{itemize}
\end{df}
Note that formula \eqref{mu_neg_delta} defines a bona fide distribution on $\mathcal{S}([0,\infty))$. Indeed, by Taylor's theorem, the integrand is of order $x^{k+\alpha}$ near $0$, therefore integrable there, while it is dominated by $x^{k+\alpha-1}$ near $+\infty$, so is integrable at infinity as well. We note that $\mu_\alpha$ is equal to the generalized function $\frac{x_+^{\alpha-1}}{\Gamma(\alpha)}$ of Section 3.5 of \cite{gelfand1964generalized}.

\begin{rk}
Note that for all $\alpha >0$ and all Borel function $\varphi: \mathbb{R}_+ \to \mathbb{R}_+$, the integral $\int_0^\infty \varphi(x) \, \mu_{\alpha}(\d x) $ coincides with $\Gamma(\alpha)^{-1} {\mathcal M}\varphi(\alpha)$, where ${\mathcal M}\varphi(\alpha)$ is the value of the Mellin transform of the function $\varphi$ computed at $\alpha$.
Definition \ref{def_mu_alpha} thus provides an extension of the Mellin transform of a function $\varphi \in \mathcal{S}([0,\infty))$ to the whole real line. In particular, equality \eqref{mu_neg_int} is natural in view of Ramanujan's Master Theorem, which allows to see the successive derivatives at $0$ of an analytic function as the values, for non-positive integers, of the analytic extension of its Mellin transform. We refer to \cite{amdeberhan2012ramanujan} for more details on this theorem. We also stress that the renormalisation procedure used in equation \eqref{mu_neg_delta} to define $\mu_{\alpha}$ for $\alpha <0$ is very natural, and can also be used to extend the domain of validity of Ramanujan's Master Theorem, see Theorem 8.1 in \cite{amdeberhan2012ramanujan}. 
\end{rk}

\begin{rk}
For $k \in \mathbb{N}$ and $\alpha$ such that $-k-1 < \alpha < -k$, and for all $\varphi \in \mathcal{S}([0,\infty))$, we obtain after $k+1$ successive integration by parts the equality:
\[
\langle \mu_{\alpha} , \varphi \rangle := (-1)^{k+1} \int_{0}^{+ \infty} \varphi^{(k+1)}(x) \, \mu_{\alpha+k+1} ({\rm d} x),
\]
which can be interpreted as a variant of the Caputo differential, at order $-\alpha$, of $\varphi$,
see e.g. (1.17) in \cite{gorenflo2008fractional}. 
\end{rk}

We recall the following basic fact, which is easily proven (see e.g. (5) in Section 3.5 of \cite{gelfand1964generalized}). It can be seen as a toy-version of the integration by parts formulae of Theorem \ref{statement_ibpf} below. 
\begin{prop}
\label{thm_ibpf_mu}
For all $\alpha \in \mathbb{R}$ and $\varphi \in \mathcal{S}([0,\infty))$
\[ \langle \mu_{\alpha}, \varphi' \rangle = - \langle \mu_{\alpha-1}, \varphi \rangle. \]
\end{prop}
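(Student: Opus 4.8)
The plan is to establish the identity separately on a few ranges of $\alpha$, each time by a single integration by parts, relying on two elementary facts: first, that $\frac{\d}{\d x}\,\mathcal{T}^{\,n+1}_{x}\varphi = \mathcal{T}^{\,n}_{x}\varphi'$ for every integer $n\geq -1$ (immediate from \eqref{eq:taylor}, with the convention $\mathcal{T}^{\,-1}_{x}\psi=\psi(x)$); and second, that $\frac{\d}{\d x}\big(x^{\alpha-1}/\Gamma(\alpha)\big)=x^{\alpha-2}/\Gamma(\alpha-1)$ whenever $\alpha$ is not a nonpositive integer, by $\Gamma(\alpha)=(\alpha-1)\Gamma(\alpha-1)$. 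Note also that $\varphi\in\mathcal{S}([0,\infty))$ implies $\varphi'\in\mathcal{S}([0,\infty))$, so that both sides are well-defined.

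\textbf{Easy cases.} If $\alpha>1$, then $\mu_\alpha$ and $\mu_{\alpha-1}$ are the measures with densities $x^{\alpha-1}/\Gamma(\alpha)$ and $x^{\alpha-2}/\Gamma(\alpha-1)$; integrating by parts on $[0,\infty)$, the boundary term at $+\infty$ vanishes because $\varphi$ and its derivatives decay faster than any power, and the boundary term at $0$ vanishes because $\alpha-1>0$, which gives the claim. The cases where $\alpha$ or $\alpha-1$ is a nonpositive integer follow directly from the definitions: $\int_0^\infty\varphi'(x)\,\d x=-\varphi(0)=-\langle\mu_0,\varphi\rangle$ when $\alpha=1$; $\langle\mu_0,\varphi'\rangle=\varphi'(0)=-\langle\mu_{-1},\varphi\rangle$ when $\alpha=0$, by \eqref{mu_neg_int}; and $\langle\mu_{-k},\varphi'\rangle=(-1)^k(\varphi')^{(k)}(0)=(-1)^k\varphi^{(k+1)}(0)=-\langle\mu_{-k-1},\varphi\rangle$ for $k\in\mathbb{N}$, $k\geq1$, again by \eqref{mu_neg_int}.

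\textbf{Main case.} It remains to treat $\alpha<1$ with $\alpha\notin\mathbb{Z}$. Let $m$ be the Taylor order attached to $\mu_{\alpha-1}$ in Definition \ref{def_mu_alpha}: $m=0$ if $\alpha\in(0,1)$ (so $\alpha-1\in(-1,0)$), and $m=k+1$ if $\alpha\in(-k-1,-k)$ with $k\in\mathbb{N}$ (so $\alpha-1\in(-k-2,-k-1)$); in either sub-case the Taylor order attached to $\mu_\alpha$ is $m-1\in\{-1\}\cup\mathbb{N}$. Integrating by parts on $[\varepsilon,\infty)$ the representation \eqref{mu_neg_delta} of $\langle\mu_{\alpha-1},\varphi\rangle$, and using the two identities from the first paragraph,
\[
\langle\mu_{\alpha-1},\varphi\rangle=\lim_{\varepsilon\downarrow0}\left(\left[\mathcal{T}^{\,m}_{x}\varphi\,\frac{x^{\alpha-1}}{\Gamma(\alpha)}\right]_{\varepsilon}^{\infty}-\int_{\varepsilon}^{\infty}\mathcal{T}^{\,m-1}_{x}\varphi'\,\frac{x^{\alpha-1}}{\Gamma(\alpha)}\,\d x\right)=-\langle\mu_{\alpha},\varphi'\rangle .
\]
Both boundary terms vanish: at $+\infty$ because $\mathcal{T}^{\,m}_{x}\varphi$ grows at most like $x^{m}$ while $x^{\alpha-1}$ decays faster, since $m+\alpha-1<0$ throughout ($m=0$ and $\alpha<1$, or $m=k+1$ and $\alpha<-k$); and at $0$ because Taylor's theorem gives $\mathcal{T}^{\,m}_{x}\varphi=O(x^{m+1})$, so the product is $O(x^{m+\alpha})$ with $m+\alpha>0$ ($\alpha>0$, or $k+1+\alpha>0$).

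\textbf{Expected obstacle.} There is no genuine analytic difficulty; the only delicate point is the bookkeeping of Taylor orders across the shift $\alpha\mapsto\alpha-1$, together with checking that the boundary contribution at $0$ really vanishes — which it does, but only because the exponent $m+\alpha$ is positive precisely on the relevant range. Alternatively one could bypass the case $\alpha<0$ by first establishing the Caputo-type representation $\langle\mu_\alpha,\psi\rangle=(-1)^{k+1}\int_0^\infty\psi^{(k+1)}\,\d\mu_{\alpha+k+1}$ for $-k-1<\alpha<-k$ (itself $k+1$ integrations by parts) and then reducing to the cases $\alpha\in(0,1)$ and $\alpha=0$.
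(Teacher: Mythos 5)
Your proof is correct. The paper does not actually give a proof of this proposition: it simply cites (5) in Section 3.5 of Gelfand--Shilov, where the identity $\frac{\d}{\d x}\,\frac{x_+^{\alpha-1}}{\Gamma(\alpha)}=\frac{x_+^{\alpha-2}}{\Gamma(\alpha-1)}$ is derived once for $\Re\alpha>1$ and then extended to all $\alpha$ by analytic continuation (the map $\alpha\mapsto\langle\mu_\alpha,\varphi\rangle$ being entire for fixed $\varphi$). Your route is different and more elementary: you establish the identity directly on each range of $\alpha$ by a single integration by parts against the appropriate Taylor remainder, using the commutation $\frac{\d}{\d x}\,\mathcal{T}^{\,n+1}_x\varphi=\mathcal{T}^{\,n}_x\varphi'$ and the recursion $\Gamma(\alpha)=(\alpha-1)\Gamma(\alpha-1)$, and you check explicitly that the boundary contributions vanish because $m+\alpha>0$ (at $0$) and $m+\alpha-1<0$ (at $\infty$) hold precisely on the relevant ranges. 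The bookkeeping of the Taylor order $m$ across the shift $\alpha\mapsto\alpha-1$ is handled correctly, and the integer cases $\alpha\in\Z_{\le1}$ are covered separately and exhaustively via \eqref{mu_neg_int}. The analytic-continuation argument is shorter once one knows $\alpha\mapsto\langle\mu_\alpha,\varphi\rangle$ is entire, but your version has the advantage of being fully self-contained and making the renormalisation mechanism (the role of the Taylor polynomial in killing the boundary term at $0$) completely transparent, which is well matched to how the same mechanism reappears in the Bessel IbPFs of the paper.
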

In particular, for $\alpha\in(0,1)$ we have the measure $\mu_\alpha$ in the left-hand side of the IbPF and the distribution $\mu_{\alpha-1}$ in the right-hand side.

\begin{rk}
\label{laplace_mu_neg}
As a consequence of Proposition \ref{thm_ibpf_mu}, we deduce that the expression \eqref{laplace0} for the Laplace tranform of $\mu_{\alpha}$ remains true also for negative $\alpha$. Indeed, for such $\alpha$, picking $k \in \mathbb{N}$ such that $\alpha + k > 0$, we have, for all $\lambda >0$
\[ \begin{split}
\langle \mu_{\alpha} , e^{-\lambda \cdot} \rangle &= (-1)^{k} \, \langle \mu_{\alpha+k} , \frac{\d^{k}}{\d x^{k}} e^{-\lambda \cdot} \rangle = \lambda^{k} \, \langle \mu_{\alpha+k} , e^{-\lambda \cdot} \rangle = \lambda^{k} \, \lambda^{-\alpha-k} = \lambda^{-\alpha}.
\end{split} \]
\end{rk}

\section{Bessel processes and associated bridges}
\label{sect_sqred_bessel}

In this section we recall and prove some useful facts about squared Bessel processes, Bessel processes, and 
their corresponding bridges. 
We recall that, for all $\alpha\geq 0$, $\theta>0$, $\Gamma(\alpha,\theta)$ denotes 
the Gamma probability law on $\mathbb{R}_{+}$

\[
\Gamma(\alpha,\theta) ({\rm d}x) = \frac{\theta^{\alpha}}{\Gamma(\alpha)} \,x^{\alpha-1} \,e^{-\theta x} \, \mathbf{1}_{x > 0}\d x,
\qquad \Gamma(0,\theta):= \delta_{0}.
\]

\subsection{Squared Bessel processes and Bessel processes} 

For all $x, \delta \geq 0$, denote by $Q^{\delta}_{x}$ the law, on $C(\mathbb{R}_{+}, \mathbb{R}_{+})$, of the $\delta$-dimensional squared Bessel process started at $x$, namely the unique solution to the SDE \eqref{sqB1} with $Y_0=x$, see Chapter XI of \cite{revuz2013continuous}. We denote by $(X_t)_{t\geq 0}$ the canonical process 
\[
X_t:C([0,1])\to\mathbb{R}, \qquad X_t(\omega):=\omega_t, \quad \omega\in C([0,1]).
\]

\begin{df}
For any interval $I \subset \mathbb{R}_{+}$, and any two probability laws $\mu, \nu$ on $C(I, \mathbb{R}_{+})$, let $\mu \ast \nu$ denote the convolution of $\mu$ and $\nu$, i.e. the image of $\mu \otimes \nu$ under the addition map:
\[ C(I,\mathbb{R}_{+}) \times C(I, \mathbb{R}_{+}) \to C(I, \mathbb{R}_{+}), \quad (x,y) \mapsto x+y. \]
\end{df}

The family of probability measures $\left(Q^{\delta}_{x}\right)_{\delta, x \geq 0}$ satisfies the following well-known additivity property, first observed by Shiga and Watanabe in \cite{shiga1973bessel}.

\begin{prop}
\label{levy}
For all $x,x', \delta, \delta'\geq 0$, we have the following equality of laws on $C(\mathbb{R}_{+}, \mathbb{R}_{+}) $
\begin{equation}
\label{additivity_sqred_bes}
 Q^{\delta}_{x} \ast Q^{\delta'}_{x'} = Q^{\delta + \delta'}_{x + x'} \end{equation}
\end{prop}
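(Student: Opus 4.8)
The plan is to prove the additivity property \eqref{additivity_sqred_bes} by exploiting the characterization of squared Bessel laws via their Laplace functionals, which turns the convolution of laws into a product of Laplace functionals. Since $Q^\delta_x$ is the law of the unique solution to \eqref{sqB1}, it suffices to identify the finite-dimensional distributions, and these are conveniently encoded by the joint Laplace transforms. So the first step is to recall (or derive from It\^o's formula applied to $\exp(-\langle \mu, Y\rangle)$ for suitable test functions) the explicit formula for
\[
E^{Q^\delta_x}\left[\exp\left(-\int_0^\infty Y_s\, \nu(\mathrm{d}s)\right)\right] = \exp\left(-x\,\phi(0)\right)\,\exp\left(-\frac{\delta}{2}\int_0^\infty \log\frac{1}{\text{(something)}}\right),
\]
or more precisely the classical representation (see Chapter XI of \cite{revuz2013continuous}): for a finite measure $\nu$ with compact support, this Laplace functional equals $\exp(-x\,\phi(0))\,\exp\left(-\frac{\delta}{2}\int_0^\infty \phi'(s)/\phi(s)\cdot(\cdots)\right)$ — the key structural point being that it factorizes as $F(x,\nu)^{?}$ into a part linear in $x$ inside the exponential and a part linear in $\delta$ inside the exponential.

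The crucial observation is then simply that, writing the Laplace functional of $Q^\delta_x$ as $\exp\bigl(-x\,A(\nu) - \delta\, B(\nu)\bigr)$ for appropriate functionals $A, B$ depending only on $\nu$, the product of the Laplace functionals of $Q^\delta_x$ and $Q^{\delta'}_{x'}$ is
\[
\exp\bigl(-x A(\nu) - \delta B(\nu)\bigr)\cdot\exp\bigl(-x' A(\nu) - \delta' B(\nu)\bigr) = \exp\bigl(-(x+x')A(\nu) - (\delta+\delta')B(\nu)\bigr),
\]
which is exactly the Laplace functional of $Q^{\delta+\delta'}_{x+x'}$. On the other hand, by the definition of $Q^\delta_x \ast Q^{\delta'}_{x'}$ as the image of the product measure under addition, and by independence, the Laplace functional of the convolution is precisely this product. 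Since a probability measure on $C(I,\mathbb{R}_+)$ is determined by the Laplace transforms of $\langle \nu, \cdot\rangle$ over all finite measures $\nu$ with compact support in $I$ (these separate points and the family is closed under the operations needed for a monotone-class / Stone-Weierstrass argument), the two laws coincide.

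The main obstacle — really the only non-routine point — is establishing the factorized form of the Laplace functional, i.e. that the dependence on the starting point $x$ and on the dimension $\delta$ enter the exponent additively and linearly, with coefficients depending only on the test measure $\nu$. For the dependence on $\delta$ this is not completely obvious from the SDE \eqref{sqB1} alone and is exactly where one invokes the computations of Chapter XI of \cite{revuz2013continuous}: one solves the Riccati-type ODE $\phi'' = 2\phi\,\nu$ (in the distributional sense, with $\phi$ decreasing from $1$ and $\phi'(0^-)$ determined by boundary conditions at $+\infty$) and checks via It\^o that $\exp(-Y_t\,\psi(t) - \tfrac{\delta}{2}\int_0^t \cdots)$ is a martingale, yielding the stated form. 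Once this representation is in hand, the additivity is immediate; alternatively, one can bypass the explicit ODE and argue directly that uniqueness in \eqref{sqB1} plus the linearity of the drift term $\delta\,t$ in $\delta$ and of the initial condition in $x$ forces the Laplace functional to have the form $\exp(-xA(\nu))\cdot G(\delta,\nu)$ with $G$ multiplicative in $\delta$, hence of the form $\exp(-\delta B(\nu))$ — but making this rigorous still essentially requires the martingale computation, so invoking \cite{shiga1973bessel} or \cite{revuz2013continuous} directly for the Laplace functional is the cleanest route.
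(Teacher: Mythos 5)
The paper does not prove Proposition \ref{levy} at all: it states it as a well-known fact and refers to \cite{shiga1973bessel} (and Chapter XI of \cite{revuz2013continuous}). So there is no ``paper's own proof'' to compare against; I will assess your proposal on its own.

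Your approach is correct and is in fact the standard one, essentially that of Shiga--Watanabe and of Theorem XI.1.2 in \cite{revuz2013continuous}. The two essential ingredients you identify are exactly right: (i) the Laplace functional of $Q^\delta_x$ against a finite compactly supported measure $\nu$ has the form $\exp(-xA(\nu)-\delta B(\nu))$ (concretely, $Q^\delta_x[e^{-\langle\nu,X\rangle}] = \exp(\tfrac{x}{2}\phi'_\nu(0))\,\phi_\nu(\infty)^{\delta/2}$, with $\phi_\nu$ the decreasing positive solution of $\phi'' = 2\phi\,\nu$ with $\phi(0)=1$; since $\phi'_\nu(0)\leq 0$ and $0<\phi_\nu(\infty)\leq 1$, this is indeed of the stated sign), and (ii) the convolution of laws corresponds to multiplication of Laplace functionals, and these functionals, over all finite compactly supported $\nu$, determine the law on $C(\mathbb{R}_+,\mathbb{R}_+)$ by a monotone-class argument. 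The combination gives exactly $Q^\delta_x\ast Q^{\delta'}_{x'}=Q^{\delta+\delta'}_{x+x'}$. The one place where your write-up is more a sketch than a proof is the early display containing the placeholder ``(something)''; in a polished version this should simply be replaced by the closed-form Laplace transform above. You are also right that the ``bypass'' argument (linearity of the drift in $\delta$ and of the initial data in $x$ forcing the factorization) is heuristic and does not avoid the martingale computation, so citing \cite{shiga1973bessel} or \cite{revuz2013continuous} for that step, as the paper itself does, is the appropriate way to close the argument. Note in passing that the paper's own proof of Lemma \ref{measure_change} \emph{uses} this additivity to obtain the $A^xB^\delta$ form of a Laplace transform, so the factorization you exploit is indeed the one the paper has in mind.
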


We recall that squared Bessel processes are homogeneous Markov processes on $\mathbb{R}_{+}$. Exploiting the additivity property \eqref{additivity_sqred_bes}, Revuz and Yor provided, in section XI of \cite{revuz2013continuous}, explicit expressions for their transition densities $\left( q^{\delta}_{t}(x,y) \right)_{t > 0, x,y \geq 0}$. When $\delta >0$, these are given by
\begin{equation}
\label{density_besq_x_pos}
q^{\delta}_{t}(x,y) = \frac{1}{2t} \left( \frac{y}{x} \right)^{\nu/2} \exp\left( - \frac{x+y}{2t} \right) I_{\nu} \left(\frac{\sqrt{xy}}{t} \right),\quad t >0, \ x>0.
\end{equation}
Here, $\nu := \delta/2 -1>-1$ and $I_{\nu}$ is the modified Bessel function of index $\nu$
\[ I_\nu(z) := \sum_{k=0}^\infty \frac{\left(z/2\right)^{2k + \nu}}{k! \, \Gamma(k + \nu +1)}, \qquad z > 0. \]
For $x=0$, we have
\begin{equation}
\label{density_besq_x_zero}
q^{\delta}_{t}(0,y) = (2t)^{-\frac\delta2} \, \Gamma \left( \delta/2 \right)^{-1} y^{\delta/2-1} \exp\left( - \frac{y}{2t} \right),\quad t >0,
\end{equation}
that is
\[q^{\delta}_{t}(0,y) \d y = \Gamma \left(\frac{\delta}{2}, \frac{1}{2t} \right) ({\rm d} y).
\]
We also denote by $P^\delta_x$ the law of the $\delta$-Bessel process, image of $Q^{\delta}_{x^2}$
under the map
\begin{equation} 
\label{sqrt_map}
C(\mathbb{R}_{+}, \mathbb{R}_{+})\ni \omega \mapsto \sqrt{\omega} \in C(\mathbb{R}_{+}, \mathbb{R}_{+})  .
\end{equation}
We shall denote by $\left( p^{\delta}_{t}(a,b) \right)_{t >0, \, a,b \geq 0}$ the transition densities of a $\delta$-Bessel process. They are given in terms of the densities of the squared Bessel process by the relation
\begin{equation}
\label{relation_denisties_bes_besq}
 \forall t > 0, \quad \forall a, b \geq 0, \quad p^{\delta}_{t}(a,b) = 2 \, b \, q^{\delta}_{t}(a^{2},b^{2}). 
\end{equation}

In section XI of \cite{revuz2013continuous}, Revuz and Yor provided semi-explicit expressions for the Laplace transforms of squared Bessel processes (and also the corresponding bridges). Their proof is based on the fact that, for all $\delta, x \geq 0$, and all finite Borel measure $m$ on $[0,1]$, the measure $\exp \left( - \langle m , X \rangle \right) Q^{\delta}_{x}$ possesses a nice probabilistic interpretation, where we use the notation
\[ \langle m , f \rangle := \int_{0}^{1} f(r) \,m({\rm d}r) \]
for any Borel function $f : [0,1] \to \mathbb{R}_+$. This remarkable fact is used implicitly in \cite{revuz2013continuous} (see e.g. the proof of Theorem (3.2) of Chap XI.3), where the authors compute the one-dimensional marginal distributions of this measure. By contrast, in the proof of Lemma \ref{lap_cond_bridge} below, we will need to compute higher-dimensional marginals. As a convenient way to perform such a computation, we will show that 
the measure $\exp \left( - \langle m , X \rangle \right) Q^{\delta}_{x}$ corresponds (up to a normalisation constant) to the image of the measure $Q^{\delta}_{x}$ under a deterministic time change. To prove this fact, we first introduce some notations.
 
Let $m$ be a finite, Borel measure on $[0,1]$. As in Chap. XI of \cite{revuz2013continuous}, we consider the unique solution $\phi:\mathbb{R}_{+}\to\mathbb{R}$ of the following problem
\begin{equation}
\label{phi} 
\begin{cases}
\phi''({\rm d} r) = 2 \mathbf{1}_{[0,1]}(r) \, \phi_{r} \, m({\rm d} r)  \\
\phi_0=1, \ \phi > 0, \ \phi ' \leq 0  \ \text{on} \ \mathbb{R}_{+},
\end{cases}
\end{equation} 
where the first is an equality of measures (see Appendix 8 of \cite{revuz2013continuous} for existence and uniqueness of solutions to this problem). Note that the above function $\phi$ coincides with the function $\phi_{\mu}$ of Chap XI.1 of \cite{revuz2013continuous}, with $\mu := 2 \mathbf{1}_{[0,1]} \, m$.

\begin{lm}
\label{measure_change}
Let $m$ be a finite, Borel measure on $[0,1]$, and let $\phi$ be the unique solution of \eqref{phi}. Then, for all $x , \delta \geq 0$, the measure $R^{\delta}_{x}$ on $C([0,1])$ defined by
\begin{equation}
\label{measure_r_delta}
R^{\delta}_{x} := \exp \left(- \frac{x}{2} \phi'_0 \right) \phi_1^{-\frac\delta2} \ e^{-\langle m, X\rangle} \ Q^{\delta}_{x} 
\end{equation}
is a probability measure, equal to the law of the process
\[ \left( \phi_t^{2} \ Y_{\varrho_t} \right)_{t \in [0,1]}, \]
where $Y \overset{(d)}{=} Q^{\delta}_{x}$ and $\varrho$ is the deterministic time change
\begin{equation}
\label{def_var_rho}
\varrho _t = \int_{0}^{t} \phi_u^{-2} \d u, \quad t \geq 0. 
\end{equation}
\end{lm}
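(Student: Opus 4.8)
The plan is to verify the identity by computing, for the candidate law $\widetilde{R}^\delta_x$ of the time-changed process $(\phi_t^2\, Y_{\varrho_t})_{t\in[0,1]}$, its action against exponential functionals of the path, and then to identify it with the right-hand side of \eqref{measure_r_delta}. The natural tool is the exponential formula for squared Bessel processes from Chapter XI of \cite{revuz2013continuous}: since squared Bessel laws are characterised by their Laplace functionals $Q^\delta_x\bigl[\exp(-\langle \nu, X\rangle)\bigr]$ for finite Borel measures $\nu$ on $[0,1]$, it suffices to check that
\[
\widetilde{R}^\delta_x\bigl[\exp(-\langle \nu, X\rangle)\bigr] = \exp\!\left(-\tfrac{x}{2}\phi'_0\right)\phi_1^{-\delta/2}\; Q^\delta_x\bigl[\exp(-\langle \nu + m, X\rangle)\bigr]
\]
for all such $\nu$, since this pins down the measure on $C([0,1])$ and, taking $\nu=0$, simultaneously yields that $\widetilde{R}^\delta_x$ has total mass one, so that $R^\delta_x$ as defined is a probability measure.

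First I would rewrite the left-hand side using the definition of the time change: $\langle \nu, \phi^2_\cdot Y_{\varrho_\cdot}\rangle = \int_0^1 \phi_t^2\, Y_{\varrho_t}\, \nu(\d t)$, and push this forward through $\varrho$ to express it as an additive functional of $Y$ along the clock $\varrho$. Concretely, if $\tau$ denotes the inverse of $\varrho$ on $[0,\varrho_1]$, then $\int_0^1 \phi_t^2\, Y_{\varrho_t}\, \nu(\d t) = \int_0^{\varrho_1} Y_s\, \widehat{\nu}(\d s)$ where $\widehat{\nu}$ is the image of $\phi_\cdot^2\,\nu$ under $\varrho$; here $\varrho$ is strictly increasing and continuous because $\phi>0$ is continuous, so $\tau$ is well-defined and the change of variables is legitimate. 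Thus the left-hand side becomes $Q^\delta_x\bigl[\exp(-\langle \widehat{\nu}, X\rangle)\bigr]$, and since $Y$ is only needed on $[0,\varrho_1]$ we are back to an exponential functional of a squared Bessel process over a (deterministic) finite interval, to which the Revuz-Yor formula applies. The key computational step is then to check that the resulting expression — which will involve the solution of the Sturm-Liouville problem $\psi''(\d r) = 2\psi_r\,\widehat{\nu}(\d r)$ on $[0,\varrho_1]$ — matches, after undoing the time change at the level of the ODE, the solution $\phi$ of \eqref{phi} with $m$ replaced by $m+\nu$. Equivalently, and more cleanly, I would substitute $\psi_{\varrho_t} = \phi_t^{-1}\,\chi_t$ and verify that $\chi$ solves the problem associated with $2\mathbf{1}_{[0,1]}(m+\nu)$: the second-derivative-in-measure transforms correctly under the time change precisely because of the relation $\d\varrho_t = \phi_t^{-2}\d t$, and the boundary/sign conditions $\phi_0=1$, $\phi>0$, $\phi'\le 0$ are preserved. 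Collecting the normalising constants $\exp(-\tfrac{x}{2}\psi'_0(0))$ and $\psi_{\varrho_1}^{-\delta/2}$ from the Revuz-Yor formula and translating them back through $\psi_{\varrho_t}=\phi_t^{-1}\chi_t$ produces exactly the prefactor $\exp(-\tfrac{x}{2}\phi'_0)\phi_1^{-\delta/2}$ together with the correction making the right-hand side equal to $Q^\delta_x[\exp(-\langle\nu+m,X\rangle)]$ times that prefactor.

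The main obstacle I anticipate is the bookkeeping in the Sturm-Liouville comparison: one must be careful that the problem \eqref{phi} is posed on all of $\mathbb{R}_+$ with $\phi'\le 0$ and $\phi>0$ globally (not just on $[0,1]$), so that $\phi_t$ for $t\in[0,1]$ really is the restriction of the global solution, and that the image measure $\widehat{\nu}$ lives on $[0,\varrho_1]$, with $\varrho_1=\int_0^1\phi_u^{-2}\d u$ finite and positive. One also needs the a priori fact that $\varrho$ maps $[0,1]$ onto $[0,\varrho_1]$ bijectively and bicontinuously, and that the measure-valued second derivative behaves well under this $C^1$ (indeed smooth away from atoms of $m$) change of variables — this is where one should invoke, or reprove, the elementary transformation rule for the operator $f\mapsto f''(\d r)$ under a time change, perhaps most safely by working with the first-order (Wronskian-type) formulation of the ODE. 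Once this transformation rule is in hand, the identification of constants is a direct computation. An alternative, slightly more robust route that avoids inverting $\varrho$ is to fix a finite partition $0=t_0<t_1<\dots<t_n=1$, compute both sides against functionals of $(X_{t_1},\dots,X_{t_n})$ using the explicit transition densities \eqref{density_besq_x_pos}-\eqref{density_besq_x_zero} and the Markov property, and check the finite-dimensional marginals agree; but the exponential-functional argument above is cleaner and is the one I would write up, falling back on the finite-dimensional computation only if the measure-change rule for $f''(\d r)$ proves delicate to state precisely.
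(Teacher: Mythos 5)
Your proposal is correct and does prove the lemma, but by a genuinely different route than the paper. The paper's proof first establishes that $R^\delta_x = Z_1\,Q^\delta_x$ for an explicit exponential martingale $Z$, then for $\delta=1$ uses Girsanov to derive the SDE \eqref{sde_h_square} satisfied by $X$ under $R^1_x$, identifies the explicit weak solution $(H_t^2)_{t\in[0,1]}$ with $H_t = (\sqrt{x}+\int_0^t \phi_s^{-1}\d W_s)\phi_t$, invokes Yamada--Watanabe uniqueness and L\'evy's characterization to obtain the time-changed representation, and finally passes from $\delta=1$ to arbitrary $\delta\ge 0$ via the additivity property \eqref{additivity_sqred_bes} (the structure $A^x B^\delta$ of exponential functionals). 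Your argument instead works at the level of Laplace functionals directly: you show that both sides assign the same expectation to $\exp(-\langle\nu,X\rangle)$ for every finite Borel measure $\nu$, which pins down both laws and, at $\nu=0$, the normalisation. The heart of your argument is that the Sturm--Liouville solution $\psi$ for the pushed-forward measure $\hat\nu=\varrho_*(\phi^2\nu)$ pulls back, via $\chi_t := \phi_t\,\psi_{\varrho_t}$, to the solution of \eqref{phi} with $m$ replaced by $m+\nu$; a short computation (using $\d\varrho_t=\phi_t^{-2}\d t$ and $\phi''=2\phi\,m$) indeed gives $\chi''(\d t) = 2\chi_t\,(m+\nu)(\d t)$ with $\chi_0=1$, $\chi>0$, $\chi'\le 0$, and then $\chi'_0=\phi'_0+\psi'_0$, $\chi_1 = \phi_1\,\psi_{\varrho_1}$ identifies the constants. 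Your route avoids the detour through $\delta=1$, the SDE machinery, and the additivity-based interpolation, treating all $\delta\ge 0$ at once and keeping the whole argument at the ODE/Laplace-transform level; on the other hand, it relies on the Revuz--Yor exponential formula as a black box (which the paper's Girsanov computation essentially reproves from scratch in computing $Z_t$), and it requires you to justify carefully the transformation rule for the measure-valued second derivative under the time change, which you correctly flag as the one delicate step.
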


\begin{proof}
We proceed as in the proofs of Theorem (1.7) and (3.2) in Chapter XI of \cite{revuz2013continuous}. Let $x, \delta \geq 0$. Under $Q^{\delta}_{x}$, $M_{t} := X_{t} - \delta t$ is a local martingale, so we can define an exponential local martingale by setting
\[ Z_{t} = \mathscr{E} \left( \frac{1}{2} \int_{0}^{\cdot} \frac{\phi'_s}{\phi_s} \d M_{s} \right)_{t}. \]
As established in the proof of Theorem (1.7)  of \cite{revuz2013continuous}, we have
\[
\begin{split}
Z_{t} &= \exp \left( \frac{1}{2} \left( \frac{\phi'_t}{\phi_t} X_{t} -  \phi'_0 x - \delta \ln \phi_t \right) - \int_{0}^{t} X_{s} \,m({\rm d}s) \right) \\
         &= \exp \left(- \frac{x}{2} \phi'_0 \right) \phi_t^{-\frac\delta2}  \exp \left( \frac{1}{2} \frac{\phi'_t}{\phi_t} X_{t} - \int_0^t X_s \,m({\rm d}s) \right), 
\end{split}
\]         
recalling that the measure $\mu$ considered in \cite{revuz2013continuous} is given in our case by $2 \, \mathbf{1}_{[0,1]} \, m$.  In particular, we deduce that the measure $R^{\delta}_{x}$  defined by \eqref{measure_r_delta} coincides with $Z_{1} Q^{\delta}_{x}$ (note that $\phi'_1=0$ as a consequence of \eqref{phi}).
Moreover, by the above expression, $(Z_{t})_{t\in[0,1]}$ is uniformly bounded by $\exp \left(- \frac{1}{2} \phi'_0 \right) \phi_1^{-\frac\delta2}$, so it is a martingale on $[0,1]$. Hence, $R^{\delta}_{x}$ defines a probability measure. 

There remains to give a description of $R^{\delta}_{x}$. By Girsanov's theorem, under $R^{1}_{x}$, $\left( X_{t} \right)_{t \in [0,1]}$ solves the following SDE on $[0,1]$
\begin{equation}
\label{sde_h_square}
X_{t} = x + 2 \int_{0}^{t} \sqrt{X_{s}} \d B_{s} + 2\int_{0}^{t} \frac{\phi'_s}{\phi_s} \,X_{s} \d s + t. 
\end{equation}
But a weak solution to this SDE is provided by $(H_{t}^{2})_{t \in [0,1]}$, where 
\[ H_{t} := \left(\sqrt{x} + \int_{0}^{t} \phi_s^{-1} \d W_{s} \right) \phi_t , \]
where $W$ is a standard Brownian motion. By strong and therefore weak uniqueness of solutions to equation \eqref{sde_h_square}, see \cite[Theorem IX.3.5]{revuz2013continuous}, we deduce that $X$ is equal in law to the process $(H_{t}^{2})_{t \in [0,1]}$. On the other hand, by L\'{e}vy's characterization theorem \cite[IV.3.6]{revuz2013continuous}, we have 
\[ (H_{t})_{t \in [0,1]} \overset{(d)}{=} \left( \phi_t \,\gamma_{\varrho_t} \right)_{t \in [0,1]},\]
where $\gamma$ is a standard Brownian motion started at $x$. Hence we deduce that
\[ (H_{t}^{2})_{t \in [0,1]} \overset{(d)}{=} \left( \phi_t^{2} \,Y_{\varrho_t} \right)_{t \in [0,1]}, \]
where $Y \overset{(d)}{=} Q^{1}_{x}$. Therefore, under $R^{1}_{x}$, we have
\[ X  \overset{(d)}{=} \left( \phi_t^{2} \,Y_{\varrho_t} \right)_{t \in [0,1]}.\] 
The claim is thus proven for $\delta=1$ and for any $x \geq 0$. Now, by the additivity property \eqref{additivity_sqred_bes} satisfied by $\left(Q^{\delta}_{x} \right)_{\delta,x \geq 0}$, there exist $A, B>0$ such that, for all $x, \delta \geq 0$, and all finite Borel measure $\nu$ on $[0,1]$, we have 
\[ Q^{\delta}_{x} \left[ \exp \left(- \int_{0}^{1}  \phi_t^{2} \,X_{\varrho_t} \,\nu ({\rm d}t) \right) \right] = A^{x} B^{\delta}, \]
which can be proved exactly as Corollary 1.3 in Chapter XI of \cite{revuz2013continuous}. Note now that the family of probability laws $\left(R^{\delta}_{x} \right)_{\delta,x \geq 0}$ satisfies the same additivity property 
\[ \forall \ \delta, \delta ', x , x'  \geq 0, \quad R^{\delta}_{x} \ast R^{\delta'}_{x'} = R^{\delta + \delta'}_{x+x'}. \] 
Hence, there also exist  $\tilde{A}, \tilde{B} >0$ such that, for all $x, \delta \geq 0$, and $\mu$ as above:
\[ R^{\delta}_{x} \left[ \exp \left(-\int_{0}^{1} X_{t} \,\nu ({\rm d}t) \right) \right] = {\tilde{A}}^{x} {\tilde{B}}^{\delta}. \]
By the previous point, evaluating at $\delta=1$, we obtain
\[ \forall x \geq 0, \quad A^{x} B = {\tilde{A}}^{x} \tilde{B}.\]
Hence $A=\tilde{A}$ and $B = \tilde{B}$, whence we deduce that, for all $\delta, x \geq 0$
\[Q^{\delta}_{x} \left[ \exp \left(- \int_{0}^{1}  \phi_t^{2} \,X_{\varrho_t} \,\nu ({\rm d}t) \right) \right] = R^{\delta}_{x} \left[ \exp \left(-\int_{0}^{1} X_{t} \,\nu ({\rm d}t) \right) \right]. \]
Since this holds for any finite measure $\nu$ on $[0,1]$, by injectivity of the Laplace transform, the claimed equality in law holds for all $\delta, x \geq 0$.
\end{proof}

\subsection{Squared Bessel bridges and Bessel bridges}

For all $\delta>0$ and $x, y \geq 0$, we denote by $Q^{\delta}_{x,y}$ the law, on $C([0,1])$, of the $\delta$-dimensional squared Bessel bridge from $x$ to $y$ over the interval $[0,1]$. In other words, $Q^{\delta}_{x,y}$ is the law of of a $\delta$-dimensional squared Bessel bridge started at $x$, and conditioned to hitting $y$ at time $1$. A rigourous construction of these probability laws is provided in Chap. XI.3 of \cite{revuz2013continuous} (see also \cite{pitman1982decomposition} for a discussion on the particular case $\delta=y=0$). 

In the sequel we shall chiefly consider the case $x=y=0$. We recall that if $X \overset{(d)}{=} Q^{\delta}_{0,0}$, then, for all $r \in (0,1)$, the distribution of the random variable $X_{r}$ is given by $\Gamma(\frac{\delta}{2}, \frac{1}{2r(1-r)})$, so it admits the density $q^{\delta}_{r}$ given by:
\begin{equation}
\label{one_pt_density_sqred_bridge_00} 
q^{\delta}_{r}(z) := \frac{z^{\delta/2-1}}{(2r(1-r))^{\frac\delta2} \Gamma(\delta/2)} \exp \left(- \frac{z}{2r(1-r)} \right),  \quad z \geq 0, 
\end{equation} 
see Chap. XI.3 of \cite{revuz2013continuous}. 

In the same way as one constructs the laws of squared Bessel bridges $Q^{\delta}_{x,y}$ for $\delta>0$ and $x , y \geq 0$, one can also construct the laws of Bessel bridges. In the following, for any $\delta>0$ and $a, b \geq 0$, we shall denote by $P^{\delta}_{a,b}$ the law, on $C([0,1])$, of the $\delta$-dimensional Bessel bridge from $a$ to $b$ over the time interval $[0,1]$ (that is, the law of a $\delta$-dimensional Bessel process started at $a$ and conditioned to hit $b$ at time $1$). We shall denote by $E^{\delta}_{a,b}$ the expectation operator for $P^{\delta}_{a,b}$. Morever, when $a=b=0$, we shall drop the subindices and use the compact notations $P^\delta$ and $E^\delta$.
Note that, for all $a,b \geq 0$, $P^{\delta}_{a,b}$ is the image of $Q^{\delta}_{a^{2},b^{2}}$ under the map $\omega\mapsto\sqrt{\omega}$.
In particular, under the measure $P^\delta$, for all $r \in (0,1)$, $X_{r}$ admits the density $p^{\delta}_{r}$ on $\mathbb{R}_{+}$, where
by \eqref{one_pt_density_sqred_bridge_00} 
\begin{equation}
\label{one_pt_density_bridge_00}
 p^{\delta}_{r}(a) = 2 a \, q^{\delta}_{r}(a^{2})=
\frac{a^{\delta-1}}{2^{\frac\delta2-1}\,\Gamma(\frac{\delta}{2})(r(1-r))^{\delta/2}}\, \exp \left(- \frac{a^{2}}{2r(1-r)} \right), \quad a \geq 0 .
\end{equation}

\subsection{Pinned bridges}

Let $\delta>0$. For all $x \geq 0$ and $r \in (0,1)$, we denote by $Q^{\delta}_{0,0}  [\, \cdot \, | \, X_{r} = x]$ the law, on $C([0,1])$, of a $\delta$-dimensional squared Bessel bridge between $0$ and $0$, pinned at $x$ at time $r$ (that is, conditioned to hit $x$ at time $r$). Such a probability law can be constructed using the same procedure as for the construction of squared Bessel bridges. One similarly defines, for all $a \geq 0$ and $r \in (0,1)$, the law  $P^{\delta} [\ \cdot \ \, | \, X_{r} = a]$ of a $\delta$-dimensional Bessel bridge between $0$ and $0$ pinned at $a$ at time $r$. Note that the latter probability measure is the image of $Q^{\delta}_{0,0}  [ \ \cdot \ \, | \, X_{r} = a^{2}]$ under the map \eqref{sqrt_map}.

With these notations at hand, we now define a family of measures which will play an important role in the IbPF for Bessel bridges. Heuristically, they should be related to the local times of the solution $(u(t,x))_{t\geq 0, \, x \in [0,1]}$ to an SPDE having the law of a Bessel bridge as reversible measure. 

\begin{df}
For all $a \geq 0$ and $r\in(0,1)$, we set
\begin{equation}\label{Sigma}
\Sigma^\delta_r({\rm d}X \,|\, a) := \frac{p^{\delta}_{r}(a)}{a^{\delta-1}} \,
 P^{\delta} [ {\rm d} X \,| \, X_{r} = a],
\end{equation}
where $p^{\delta}_{r}$ is the probability density function of $X_{r}$ under $P^{\delta}:=P^{\delta}_{0,0}$, see \eqref{one_pt_density_bridge_00}. 
\end{df}
The measure $\Sigma^\delta_r(\,\cdot \,|\, a)$ is meant to be the \textit{Revuz measure} of the \textit{additive functional} corresponding to the diffusion local time of $(u(t,r))_{t\geq 0}$ at level $a \geq 0$ (see \cite[Chap. V]{fukushima2010dirichlet} and \cite[Chap. 6]{ma2012introduction} for this terminology). 

\begin{rk}
Note that, for all $r \in (0,1)$, by 
\eqref{one_pt_density_bridge_00}, we have
\[
 \frac{p^{\delta}_{r}(a)}{a^{\delta-1}} = 
\frac1{2^{\frac\delta2-1}\,\Gamma(\frac{\delta}{2})(r(1-r))^{\delta/2}}\, \exp \left(- \frac{a^{2}}{2r(1-r)} \right), \quad a > 0,
\]
and the right-hand side is well-defined also for $a=0$. It is this  quantity that we consider in equality \eqref{Sigma} above.
\end{rk}

To keep the formulae concise, for all $r \in (0,1)$ and $a \geq 0$, and all Borel function $\Phi : C([0,1]) \to \mathbb{R}_+$, we shall write with a slight abuse of language
\[ \Sigma^\delta_r(\Phi(X) \,|\, a) := \int \Phi(X) \ \Sigma^\delta_r({\rm d}X \,|\, a).
\]
In the sequel we will have to compute quantities of the form
\[
\Sigma^\delta_r\left(\exp(- \langle m,  X^2 \rangle) \,|\, a\right)
\]
for $m$ a finite Borel measure on $[0,1]$. In that perspective, we introduce some further notations. Given such a $m$, following the notation used in \cite{pitman1982decomposition} (see also Exercise (1.34), Chap. XI, of \cite{revuz2013continuous}), we denote by $\psi$ the function on $[0,1]$ given by 

\begin{equation}
\label{psi} 
\psi_r := \phi_r \int_{0}^{r} \phi_u^{-2} \d u = \phi_r \varrho_r, \qquad r \in [0,1],
\end{equation}
where $\varrho$ is as in \eqref{def_var_rho}.
Note that $\psi$ is the unique solution on $[0,1]$ of the Cauchy problem
\[\begin{cases}
\psi''({\rm d} r) = 2 \, \psi_{r} \, m({\rm d} r) \\
\psi_0=0, \quad \psi'_0= 1.
\end{cases} 
\]  
Moreover, we denote by $\hat{\psi}$ the function on $[0,1]$ given by
\begin{equation}
\label{psi_hat}
\hat{\psi}_r := \phi_1 \phi_r (\varrho_{1} - \varrho_r) = \psi_1 \phi_r - \psi_r \phi_1,\quad r \in [0,1].
\end{equation}
Note that $\hat{\psi}$ satisfies the following problem on $[0,1]$
\[\begin{cases}
\hat{\psi}''({\rm d} r) = 2 \, \hat{\psi}_{r} \, m({\rm d} r)  \\
\hat{\psi}_1=0, \quad \hat{\psi}'_1= -1.
\end{cases} 
\]
Note that the functions $\phi$, $\psi$ and $\hat{\psi}$ take positive values on $]0,1[$.
\begin{lm}\label{lap_cond_bridge} 
For all $r \in (0,1)$, $\delta>0$ and $a \geq 0$, the following holds:
\begin{equation}\label{bridge2}
\int \exp (- \langle m, X^{2} \rangle) \ \Sigma^\delta_r({\rm d}X \,|\, a) =
\frac1{2^{\frac\delta2-1}\,\Gamma(\frac{\delta}{2})} \,
\exp \left(-\frac{a^{2}}{2} C_r \right)  D_r^{\delta/2},
\end{equation}
where
\[
C_r = \frac{\psi_1}{\psi_r \hat{\psi}_r}, \qquad D_r = \frac{1}{\psi_r \hat{\psi}_r}. 
\]
\end{lm}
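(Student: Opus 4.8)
The plan is to push everything down to squared Bessel processes via the square root map \eqref{sqrt_map}, use Lemma~\ref{measure_change} to compute the joint law at the two times $r$ and $1$, and then send the right endpoint to $0$ in order to recover the pinned bridge.

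\textbf{Step 1 (reduction to squared Bessel bridges).} Since $P^\delta$ is the image of $Q^\delta_{0,0}$ under \eqref{sqrt_map}, and $P^\delta[\,\cdot\,|\,X_r=a]$ is the image of $Q^\delta_{0,0}[\,\cdot\,|\,X_r=a^2]$, the functional $\exp(-\langle m,X^2\rangle)$ pulls back to $\exp(-\langle m,X\rangle)$. Writing $z:=a^2$ and using \eqref{Sigma} together with \eqref{one_pt_density_bridge_00},
\[
\int\exp(-\langle m,X^2\rangle)\,\Sigma^\delta_r(\mathrm{d}X\,|\,a)=\frac{\exp\!\bigl(-\tfrac{z}{2r(1-r)}\bigr)}{2^{\delta/2-1}\,\Gamma(\delta/2)\,(r(1-r))^{\delta/2}}\;Q^\delta_{0,0}\!\left[\exp(-\langle m,X\rangle)\,\big|\,X_r=z\right],
\]
so it suffices to compute the conditional Laplace transform of the squared Bessel bridge from $0$ to $0$ pinned at $z$ at time $r$.

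\textbf{Step 2 (the two-point marginal via Lemma~\ref{measure_change}).} Taking $x=0$ in Lemma~\ref{measure_change}, the measure $R^\delta_0=\phi_1^{-\delta/2}e^{-\langle m,X\rangle}Q^\delta_0$ is the law of $\bigl(\phi_t^{2}Y_{\varrho_t}\bigr)_{t\in[0,1]}$ with $Y\overset{(d)}{=}Q^\delta_0$ and $\varrho$ as in \eqref{def_var_rho}; hence for bounded Borel $g,h$,
\[
Q^\delta_0\!\left[e^{-\langle m,X\rangle}\,g(X_r)\,h(X_1)\right]=\phi_1^{\delta/2}\,Q^\delta_0\!\left[g\bigl(\phi_r^{2}Y_{\varrho_r}\bigr)\,h\bigl(\phi_1^{2}Y_{\varrho_1}\bigr)\right].
\]
Both sides are integrals of $g(z)h(y)$ against a density in $(z,y)$: by the Markov property, the left-hand density equals $q^\delta_r(0,z)\,q^\delta_{1-r}(z,y)$ times $Q^\delta_0[e^{-\langle m,X\rangle}\,|\,X_r=z,X_1=y]$, while the right-hand one, after the change of variables $z=\phi_r^{2}w_1$, $y=\phi_1^{2}w_2$, equals $\phi_1^{\delta/2-2}\phi_r^{-2}\,q^\delta_{\varrho_r}(0,z/\phi_r^{2})\,q^\delta_{\varrho_1-\varrho_r}(z/\phi_r^{2},y/\phi_1^{2})$. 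Identifying the two densities and inserting the explicit formulae \eqref{density_besq_x_pos}--\eqref{density_besq_x_zero} gives a closed expression for $Q^\delta_0[e^{-\langle m,X\rangle}\,|\,X_r=z,X_1=y]$.

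\textbf{Step 3 (letting $y\downarrow0$ and rewriting).} From $I_\nu(x)\sim(x/2)^{\nu}/\Gamma(\nu+1)$ as $x\to0$ one gets $q^\delta_t(w,y)\sim y^{\delta/2-1}(2t)^{-\delta/2}\Gamma(\delta/2)^{-1}e^{-w/(2t)}$ as $y\downarrow0$ (valid also when $\delta<2$), so the factors $y^{\delta/2-1}$ cancel between the two sides, and $Q^\delta_0[e^{-\langle m,X\rangle}\,|\,X_r=z,X_1=y]\to Q^\delta_{0,0}[e^{-\langle m,X\rangle}\,|\,X_r=z]$ by the construction of the pinned bridge. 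A short computation yields
\[
Q^\delta_{0,0}\!\left[e^{-\langle m,X\rangle}\,\big|\,X_r=z\right]=(r(1-r))^{\delta/2}\,\bigl(\varrho_r(\varrho_1-\varrho_r)\bigr)^{-\delta/2}\,\phi_1^{-\delta/2}\,\phi_r^{-\delta}\,\exp\!\left(-\frac{z\,\varrho_1}{2\phi_r^{2}\varrho_r(\varrho_1-\varrho_r)}+\frac{z}{2r(1-r)}\right).
\]
Substituting $\varrho_r=\psi_r/\phi_r$, $\varrho_1-\varrho_r=\hat\psi_r/(\phi_1\phi_r)$ and $\varrho_1=\psi_1/\phi_1$, which follow from \eqref{def_var_rho}, \eqref{psi} and \eqref{psi_hat}, one has $\varrho_r(\varrho_1-\varrho_r)=\psi_r\hat\psi_r/(\phi_1\phi_r^{2})$, so the polynomial prefactor collapses to $(r(1-r))^{\delta/2}D_r^{\delta/2}$, while $\varrho_1/\bigl(\phi_r^{2}\varrho_r(\varrho_1-\varrho_r)\bigr)=\psi_1/(\psi_r\hat\psi_r)=C_r$. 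Plugging this into Step~1, where the two exponential factors $e^{\pm z/(2r(1-r))}$ cancel, gives exactly \eqref{bridge2}.

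The delicate point is the limit in Step~3: one must justify that the disintegration $Q^\delta_0[\,\cdot\,|\,X_r=z,X_1=y]$ converges, as $y\downarrow0$, to the pinned bridge law $Q^\delta_{0,0}[\,\cdot\,|\,X_r=z]$ (essentially the continuity of the conditional laws built into the construction of these bridges), and to keep track of the behaviour of the transition densities near $0$, which is different according to whether $\delta<2$ or $\delta>2$. All the remaining steps are routine bookkeeping with the explicit Bessel densities \eqref{density_besq_x_pos}--\eqref{density_besq_x_zero}.
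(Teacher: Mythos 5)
Your proof is correct and follows essentially the same route as the paper: reduce to the conditional Laplace transform of the squared Bessel bridge via \eqref{Sigma} and \eqref{one_pt_density_bridge_00}, use Lemma~\ref{measure_change} to identify the two-point joint density of $e^{-\langle m,X\rangle}Q^\delta_0$, let the right endpoint tend to $0$ using the weak continuity of $(x,y)\mapsto Q^\delta_{x,y}$, and then simplify with the relations $\varrho_r=\psi_r/\phi_r$, $\varrho_1-\varrho_r=\hat\psi_r/(\phi_1\phi_r)$. The only (cosmetic) difference is that you work directly with the asymptotics of $I_\nu$ to justify the $y\downarrow0$ limit, where the paper quotes the explicit densities \eqref{density_besq_x_pos}--\eqref{density_besq_x_zero} and computes the two limiting ratios separately.
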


\begin{proof}
First note that by \eqref{relation_denisties_bes_besq} and \eqref{Sigma}, we have

\begin{equation}
\label{intermediate_expr} 
\begin{split}
\int \exp (- \langle m, X^{2} \rangle) \ \Sigma^\delta_r({\rm d}X \,|\, a) = 2 \, \frac{q^{\delta}_{r}(a^{2})}{a^{\delta-2}} \, Q^{\delta}_{0,0}  [\exp (- \langle m, X \rangle ) \, | \, X_{r} = a^{2}]. 
\end{split}
\end{equation}
To obtain the claim, it therefore suffices to compute 
\[ Q^{\delta}_{0,0}  [\exp (- \langle m, X \rangle ) \, | \, X_{r} = a^{2}]. \] 
Since $Q^{\delta}_{0,0} := Q^\delta_0 [ \, \cdot \, | X_1 = 0]$, one can rewrite the above expression as
\[Q^{\delta}_0 [\exp (- \langle m, X \rangle ) \, | \, X_{r} = a^{2}, X_1 = 0].\]
Therefore, \eqref{bridge2} follows from the computation of the Laplace transform of the conditional law 
$ Q^{\delta}_0$ given the value of the pair $(X_r,X_1)$.
To this aim, consider two Borel functions $f,g: \mathbb{R}_{+} \to \mathbb{R}_{+}$. We have
\begin{align*}
&\int_{0}^{\infty} \int_{0}^{\infty} Q^{\delta}_{0} [\exp (- \langle m, X \rangle) \, | \, X_{r} = x, X_{1}=y] \, q^{\delta}_{r}(a^{2},x) q^{\delta}_{1-r}(x,y) f(x) g(y) \d x \d y = \\
&= Q^{\delta}_{0} \left[\exp (- \langle m, X \rangle) f(X_{r}) g(X_{1}) \right] 
= \phi_1^{\frac\delta2} Q^{\delta}_{0} \left[ f \left( \phi_r^{2} X_{\varrho_r} \right) g \left( \phi_1^{2} X_{\varrho_1} \right) \right] = \\
&= \phi_1^{\delta/2-2} \phi_r^{-2} \int_{0}^{\infty} \int_{0}^{\infty} q^{\delta}_{\varrho_r} \left(0,\frac{x}{\phi_r^{2}}\right) q^{\delta}_{\varrho_1 - \varrho_r}\left(\frac{x}{\phi_r^{2}},\frac{y}{\phi_1^{2}}\right)f(x)g(y) \d x \d y  .
\end{align*}
Here, we used Lemma \ref{measure_change} to obtain the second equality. Since the functions $f$ and $g$ are arbitrary we deduce that:
\[ \begin{split} 
& Q^{\delta}_{0} [\exp (- \langle m, X \rangle) \, | \, X_{r} = x, X_{1}=y] \, =
\, \phi_1^{\delta/2-2} \phi_r^{-2} \frac{q^{\delta}_{\varrho_r} \left(0,\frac{x}{\phi_r^{2}}\right) q^{\delta}_{\varrho_1 - \varrho_r}\left(\frac{x}{\phi_r^{2}},\frac{y}{\phi_1^{2}}\right)}{q^{\delta}_{r}(0,x) \,q^{\delta}_{1-r}(x,y)} 
\end{split}\]
${\rm d} x \d y$ a.e. on ${\mathbb{R}_{+}^{*}}\times {\mathbb{R}_{+}^{*}}$. Since the family of measures $\left( Q^{\delta}_{x,y} \right)_{x,y \geq 0}$ is continuous in $(x,y) \in \mathbb{R}_{+}^{2}$ for the weak topology on probability measures (see \cite{revuz2013continuous}, Section XI.3), we deduce that, for all $x \geq 0$
\[\begin{split}
Q^{\delta}_{0,0}  [\exp (- \langle m, X \rangle) \, | \, X_{r} = x] &= \underset{\substack{y \to 0 \\y>0}}{\lim} \, \phi_1^{\delta/2-2} \phi_r^{-2} \frac{q^{\delta}_{\varrho_r} \left(0,\frac{x}{\phi_r^{2}}\right) q^{\delta}_{\varrho_1 - \varrho_r}\left(\frac{x}{\phi_r^{2}},\frac{y}{\phi_1^{2}}\right)}{q^{\delta}_{r}(0,x) \,q^{\delta}_{1-r}(x,y)}. 
\end{split}\]
But, by \eqref{density_besq_x_pos} and \eqref{density_besq_x_zero}, we have
\[\frac{q^{\delta}_{\varrho_r} \left(0,\frac{x}{\phi_r^{2}}\right) }{q^{\delta}_{r}(0,x)} = \left(\frac{r}{\varrho_{r}}\right)^{\frac\delta2} \phi_{r}^{2-\delta} \exp \left(-\frac{x}{2}\left(\frac{1}{\phi_{r}^{2} \varrho_{r}}- \frac{1}{r} \right) \right)
\]
and
\[ \underset{\substack{y \to 0 \\y>0}}{\lim} \, \frac{q^{\delta}_{\varrho_1 - \varrho_r}\left(\frac{x}{\phi_r^{2}},\frac{y}{\phi_1^{2}}\right)}{q^{\delta}_{1-r}(x,y)} = \left(\frac{1-r}{\varrho_{1} -\varrho_{r}}\right)^{\frac\delta2} \phi_{1}^{2-\delta} \exp \left(-\frac{x}{2}\left(\frac{1}{\phi_{r}^{2} (\varrho_{1}-\varrho_{r})}- \frac{1}{1-r} \right) \right). \]
We thus obtain
\begin{equation}
\label{equality_bridge}
\begin{split}
&Q^{\delta}_{0,0}  [\exp (- \langle m, X \rangle) \, | \, X_{r} = x] =\\
&= \phi_1^{-\delta/2} \phi_r^{-\delta} \left(\frac{r(1-r)}{\varrho_{r}(\varrho_{1} -\varrho_{r})}\right)^{\frac\delta2} \exp \left(-\frac{x}{2}\left(\frac{\varrho_{1}}{\phi_{r}^{2} \varrho_{r}(\varrho_{1}-\varrho_{r})}- \frac{1}{r(1-r)} \right) \right) = \\
&= \left(\frac{r(1-r)}{\psi_{r} \hat{\psi}_{r}}\right)^{\frac\delta2} \exp \left(-\frac{x}{2}\left(\frac{\psi_{1}}{\psi_{r} \hat{\psi}_{r}}- \frac{1}{r(1-r)} \right) \right), 
\end{split}
\end{equation}
where the second equality follows from the relations \eqref{psi}-\eqref{psi_hat} defining $\psi$ and $\hat{\psi}$. Applying this equality to $x=a^{2}$, and replacing in \eqref{intermediate_expr}, we obtain the claim.
\end{proof}

\begin{rk}
Along the proof of the above Proposition, for $\delta > 0$, $a \geq 0$, $r \in (0,1)$ and $m$ as above, we also obtained from equality \eqref{equality_bridge} the following, useful expression
\begin{equation}
\label{cond_bridge}
\begin{split}
& Q^{\delta}_{0,0}  \left[\exp (- \langle m, X \rangle) \, | \, X_{r} = a^2\right] = E^{\delta} [\exp (- \langle m, X^{2} \rangle) \, | \, X_{r} = a] \\
& = \exp \left(-\frac{a^{2}}{2} \left( \frac{\psi_1}{\psi_r \hat{\psi}_r} - \frac{1}{r(1-r)} \right)\right)  \left(\frac{r(1-r)}{\psi_r \hat{\psi}_r}\right)^{\delta/2}.
\end{split}
\end{equation}
\end{rk}

\section{Integration by parts formulae}
\label{sect_ibpf_exp_func}

Here and in the sequel, we denote by $\mathcal{S}$ the linear span of all functionals on $C([0,1])$ of the form
\begin{equation}
\label{exp_functional}
C([0,1])\ni X \mapsto \exp \left( - \langle m, X^{2} \rangle \right)\in \mathbb{R} 
\end{equation}
where $m$ is a finite Borel measure on $[0,1]$. The elements of $\mathcal{S}$ are the functionals for which we will derive our IbPFs wrt the laws of Bessel bridges. 

\subsection{The statement}

After recalling the definition \eqref{kappadelta} of $\kappa(\delta) = \frac{(\delta-3)(\delta-1)}{4}$, for $\delta\in\R$, we can now state one of the main results of this article.

\begin{thm}
\label{statement_ibpf}
Let $\delta \in (0,\infty) \setminus \{1,3\}$, and set $k:=\lfloor \frac{3-\delta}{2} \rfloor \leq 1$. Then, for all $\Phi \in \mathcal{S}$ and $h \in C^2_c(0,1)$
\begin{equation}
\label{exp_fst_part_ibpf_a_b}
\begin{split}
& E^{\delta} (\partial_{h} \Phi (X) ) + E^{\delta} (\langle h '' , X \rangle \, \Phi(X) ) = \\
&=-\kappa(\delta)\int_{0}^{1}  
 h_{r}  \int_0^\infty a^{\delta-4} \Big[ \mathcal{T}^{\,2k}_{a} \, \Sigma^\delta_r(\Phi (X) \,|\, \cdot\,) \Big]
  \d a \d r,
\end{split}
\end{equation}
where $\mathcal{T}^{\,n}_{x}$ is the Taylor remainder defined in \eqref{eq:taylor}.
On the other hand, when $\delta \in \{1,3\}$, the following formulae hold for all $\Phi \in \mathcal{S}$ and $h \in C^2_c(0,1)$
\begin{equation}
\label{exp_fst_part_ibpf_a_b_3}
E^{3}(\partial_{h} \Phi (X) ) + E^{3}(\langle h '' , X \rangle \, \Phi(X) ) = 
-\frac{1}{2} \int_{0}^{1} h_{r} \, \Sigma^3_r(\Phi (X) \,|\, 0) \d r, 
\end{equation}
\begin{equation}
\label{exp_fst_part_ibpf_a_b_1}
\begin{split}
E^{1} (\partial_{h} \Phi (X) ) + E^{1} (\langle h '' , X \rangle \, \Phi(X) ) = 
\frac{1}{4} \int_{0}^{1} h_{r} \, \frac{{\rm d}^{2}}{{\rm d} a^{2}} \, \Sigma^1_r (\Phi(X) \, | \, a) \, \biggr\rvert_{a=0}  \d r. 
\end{split}
\end{equation}
\end{thm}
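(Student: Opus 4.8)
The plan is to reduce, by bilinearity of both sides of all three formulae, to the case of a single exponential functional $\Phi(X)=\exp(-\langle m,X^{2}\rangle)$ with $m$ a finite Borel measure on $[0,1]$; every quantity in sight then has a closed form built from the functions $\phi,\psi,\hat\psi$ attached to $m$. For such $\Phi$ one has $\partial_{h}\Phi(X)=-2\langle m,Xh\rangle\,\Phi(X)$, so by Fubini the left-hand side of each formula equals
\[
-2\int_{0}^{1}h_{r}\,g_{\delta}(r)\,m({\rm d}r)+\int_{0}^{1}h''_{r}\,g_{\delta}(r)\,{\rm d}r,\qquad g_{\delta}(r):=E^{\delta}\bigl(X_{r}\,\Phi(X)\bigr).
\]
By the tower property and the definition \eqref{Sigma}, $g_{\delta}(r)=\int_{0}^{\infty}a^{\delta}\,\Sigma^{\delta}_{r}(\Phi(X)\,|\,a)\,{\rm d}a$, and inserting the Gaussian expression of Lemma \ref{lap_cond_bridge} and evaluating the elementary $\Gamma$-integral gives
\[
g_{\delta}(r)=c_{\delta}\,\sqrt{\psi_{r}\hat\psi_{r}},\qquad c_{\delta}:=\frac{\sqrt2\,\Gamma(\tfrac{\delta+1}{2})}{\Gamma(\tfrac{\delta}{2})}\,\psi_{1}^{-\frac{\delta+1}{2}}.
\]
The point is that the whole $\delta$-dependence of $g_{\delta}$ sits in the scalar $c_{\delta}$, the shape function $w:=\sqrt{\psi\hat\psi}$ being independent of $\delta$.

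The core of the argument is then a single $\delta$-independent differential identity for $w$. Using $\psi''({\rm d}r)=2\psi_{r}\,m({\rm d}r)$, $\hat\psi''({\rm d}r)=2\hat\psi_{r}\,m({\rm d}r)$ and the fact that the Wronskian $\psi'\hat\psi-\psi\hat\psi'$ is constant (equal to $\psi_{1}$, as seen by evaluating at $r=1$ using $\hat\psi_{1}=0$, $\hat\psi'_{1}=-1$), a direct computation with the product rule for functions of bounded variation yields, as measures on $(0,1)$,
\[
w''({\rm d}r)=2\,w_{r}\,m({\rm d}r)-\frac{\psi_{1}^{2}}{4\,w_{r}^{3}}\,{\rm d}r.
\]
Since $h\in C^{2}_{c}(0,1)$ and $w,1/w$ are bounded on ${\rm supp}(h)\subset(0,1)$, integrating by parts twice gives $\int_{0}^{1}h''_{r}g_{\delta}(r)\,{\rm d}r=c_{\delta}\int_{0}^{1}h_{r}\,w''({\rm d}r)$; substituting the identity above makes the terms in $m$ cancel against $-2\int h_{r}g_{\delta}(r)\,m({\rm d}r)$, so the left-hand side of every formula collapses to
\[
-\frac{\psi_{1}^{2}\,c_{\delta}}{4}\int_{0}^{1}h_{r}\,(\psi_{r}\hat\psi_{r})^{-3/2}\,{\rm d}r
=-\frac{\sqrt2\,\Gamma(\tfrac{\delta+1}{2})}{4\,\Gamma(\tfrac{\delta}{2})}\,\psi_{1}^{\frac{3-\delta}{2}}\int_{0}^{1}h_{r}\,(\psi_{r}\hat\psi_{r})^{-3/2}\,{\rm d}r.
\]

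It then remains to check that the right-hand side of each of \eqref{exp_fst_part_ibpf_a_b}, \eqref{exp_fst_part_ibpf_a_b_3}, \eqref{exp_fst_part_ibpf_a_b_1} equals the same expression. Again by Lemma \ref{lap_cond_bridge}, $a\mapsto\Sigma^{\delta}_{r}(\Phi(X)\,|\,a)$ is the even analytic function $\tfrac{D_{r}^{\delta/2}}{2^{\delta/2-1}\Gamma(\delta/2)}\exp(-\tfrac{a^{2}}{2}C_{r})$, with $C_{r}=\psi_{1}D_{r}$ and $D_{r}=(\psi_{r}\hat\psi_{r})^{-1}$. For $\delta\notin\{1,3\}$ one has $k=\lfloor\tfrac{3-\delta}{2}\rfloor$, and since the function is even its order-$2k$ Taylor remainder in $a$ coincides with the order-$k$ Taylor remainder in $b=a^{2}$ (with the convention $\mathcal{T}^{n}=\mathrm{id}$ for $n<0$, which also covers $\delta>3$); the substitution $b=a^{2}$ then turns $\int_{0}^{\infty}a^{\delta-4}\,\mathcal{T}^{\,2k}_{a}\Sigma^{\delta}_{r}(\Phi(X)\,|\,\cdot)\,{\rm d}a$ into $\tfrac12\int_{0}^{\infty}b^{\frac{\delta-5}{2}}\,\mathcal{T}^{\,k}_{b}(e^{-\,\cdot\,C_{r}/2})\,{\rm d}b$, which by Definition \ref{def_mu_alpha} and Remark \ref{laplace_mu_neg} applied with $\alpha=\tfrac{\delta-3}{2}$ equals $\tfrac12\,\Gamma(\tfrac{\delta-3}{2})\,(C_{r}/2)^{-\frac{\delta-3}{2}}$. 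Collecting the powers of $D_{r}$ and $\psi_{1}$ turns this into a scalar multiple of $(\psi_{r}\hat\psi_{r})^{-3/2}$, and matching with the left-hand side reduces the whole identity to the elementary relation $\kappa(\delta)\,\Gamma(\tfrac{\delta-3}{2})=\Gamma(\tfrac{\delta+1}{2})$, which is immediate from \eqref{kappadelta} and the functional equation of $\Gamma$. For $\delta=3$ one simply uses $\Sigma^{3}_{r}(\Phi(X)\,|\,0)=\tfrac{D_{r}^{3/2}}{\sqrt2\,\Gamma(3/2)}$, and for $\delta=1$ one uses $\tfrac{{\rm d}^{2}}{{\rm d}a^{2}}\Sigma^{1}_{r}(\Phi(X)\,|\,a)\big|_{a=0}=-C_{r}\,\tfrac{D_{r}^{1/2}}{2^{-1/2}\Gamma(1/2)}$; in both cases the resulting multiple of $(\psi_{r}\hat\psi_{r})^{-3/2}$ matches the left-hand side after simplifying $\Gamma(3/2)=\tfrac{\sqrt\pi}{2}$, $\Gamma(1/2)=\sqrt\pi$.

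The main obstacle I anticipate is the rigorous justification of the measure identity for $w''$ and of the two integrations by parts when $m$ charges points, so that $\psi',\hat\psi'$ and hence $w'$ are only of bounded variation: one must be careful with the BV product rule (exploiting that $\psi,\hat\psi,w$ are continuous, so that no extra jump contributions appear) and with the boundary terms at $0$ and $1$ (harmless since $h$ is compactly supported in $(0,1)$, where $w$ and $1/w$ are bounded). Everything else is routine bookkeeping with $\Gamma$-factors and elementary convergence checks for the renormalised integrals near $0$ and $\infty$. Finally, one can record that all three right-hand sides are subsumed by the single expression $-\tfrac{\Gamma(\delta)}{4(\delta-2)}\int_{0}^{1}\langle\mu_{\delta-3},\Sigma^{\delta}_{r}(\Phi(X)\,|\,\cdot)\rangle\,{\rm d}r$, exhibiting the formulae for $\delta<3$ as the analytic continuation in $\delta$ of those for $\delta\geq3$.
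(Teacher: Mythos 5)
Your proposal is correct and follows essentially the same route as the paper: it reduces by linearity to a single exponential functional, computes $g_{\delta}(r)=c_{\delta}\sqrt{\psi_{r}\hat\psi_{r}}$ from Lemma \ref{lap_cond_bridge}, uses the BV differential identity $w''=2wm-\tfrac{\psi_1^2}{4w^3}\,{\rm d}r$ for $w=\sqrt{\psi\hat\psi}$ (this is precisely Lemma \ref{thm}, Wronskian and all) to collapse the left-hand side to $-\tfrac{\psi_1^2 c_{\delta}}{4}\int_0^1 h_r(\psi_r\hat\psi_r)^{-3/2}\,{\rm d}r$, which is the paper's \eqref{exp_fst_part_ibpf0}, and then matches the renormalised right-hand side via the substitution $b=a^2$ and the Laplace-transform identity for $\mu_{(\delta-3)/2}$, i.e.\ the paper's Lemma \ref{neg_gamma} and \eqref{forallx'}, finishing with the same $\Gamma$-identity $\kappa(\delta)\Gamma(\tfrac{\delta-3}{2})=\Gamma(\tfrac{\delta+1}{2})$. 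The only cosmetic difference is ordering and bookkeeping (and one harmless notational slip where the prefactor $A=D_r^{\delta/2}/(2^{\delta/2-1}\Gamma(\delta/2))$ is momentarily omitted from the displayed substitution but restored in the subsequent collection of $D_r$ and $\psi_1$ powers).
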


\begin{rk}
Note that the last integral in \eqref{exp_fst_part_ibpf_a_b} is indeed convergent. Indeed, by Lemma \ref{lap_cond_bridge} $\mathcal{T}^{\,2k}_{a} \, \Sigma^\delta_r(\Phi (X) \,|\, \cdot\,)$ is the Taylor remainder of order $2k$ at $0$ of a smooth, even, function, see \eqref{eq:taylor} above. Hence, near $0$, the integrand is of order $O(a^{\delta+ 2k - 2})$. Since, $\delta + 2k -2 > -1$, the integral is convergent at $0$. On the other hand, near $\infty$, the integrand is of order $O(a^{\delta + 2k -4 })$. Since $\delta + 2k -4 < -1$, integrability also holds at $+ \infty$.
\end{rk}

\begin{rk}
For all $\delta \in (1,3)$ the right-hand side in the IbPF \eqref{exp_fst_part_ibpf_a_b} takes the form
\[ -\kappa(\delta)\int_{0}^{1} h_{r}  \int_0^\infty a^{\delta-4} \Big[ \Sigma^\delta_r(\Phi (X) \,|\, a) - \Sigma^\delta_r(\Phi (X) \,|\, 0) \Big]  \d a \d r. \]
Note that, while there is a transition in the structure of the IbPF at the values $\delta=3$ and $\delta=1$, with the order of the Taylor series changing at these critical values, no such transition occurs at $\delta=2$. This might seem surprising given the transition that the Bessel bridges undergo at $\delta=2$, which is the smallest value of $\delta$ satisfying
\[ P^{\delta} \left[ \exists r \in \,]0,1[ \ : \, X_{r} = 0 \right] = 0. \]
This lack of transition at $\delta=2$ is related to the fact that, as a consequence of Lemma \ref{lap_cond_bridge}, we have for all $\Phi\in{\mathcal E}$:
\[ \frac{\rm d}{{\rm d}a} \, \Sigma^\delta_r(\Phi (X) \,|\, a) \biggr\rvert_{a=0} = 0. \]
\end{rk}

\begin{rk}
In the IbPF \eqref{exp_fst_part_ibpf_a_b}, the last term may equivalently be written as
\begin{equation}
\label{last_term}
 -\kappa(\delta)\int_{0}^{1}  
 h_{r}  \int_0^\infty a^{-3} \Big[ \mathcal{T}^{\,2k}_{a} \, \Sigma^\delta_r(\Phi (X) \,|\, \cdot\,) \Big]
  m_{\delta}({\rm d}a) \d r 
\end{equation}
where $m_{\delta}$ is the measure on $\mathbb{R}_{+}$ defined by 
\[ m_{\delta}({\rm d} a) = \mathbf{1}_{a>0} \, a^{\delta-1} \d a. \]
Note that $m_{\delta}$ is a reversible measure for the $\delta$-dimensional Bessel process. Actually, if $(X_{t})_{t \geq 0}$ is a $\delta$ dimensional Bessel process, we can construct a bicontinuous family of \textit{diffusion local times} $\left(\ell^{a}_{t}\right)_{a, t \geq 0}$, satisfying the occupation times formula
\[  
\int_{0}^{t} f \left( X_{s} \right) {\rm d} s = \int_{0}^{+\infty}  f(a) \, \ell^{a}_{t} \, m_{\delta}({\rm d}a),
\]     
for all $f: \mathbb{R}_{+} \to \mathbb{R}_{+}$ bounded and Borel. We hope that such a property should hold also for $(u(t,x))_{t \geq 0}$, for all $x \in (0,1)$ where $u$ is the hypothetical solution of the dynamics corresponding to $P^{\delta}$. In that case the term \eqref{last_term} should correspond, in the dynamics, to a drift in $u^{-3}$ integrated against renormalised local times. We shall develop this idea more in detail in Section \ref{sect_conj_dynamics} below. 
\end{rk}

\subsection{Proof of Theorem \ref{statement_ibpf}}

We first state a differential relation satisfied by the product of the functions $\psi$ and $\hat{\psi}$ associated as above with a finite Borel measure $m$ on $[0,1]$. This relation is the skeleton of all the IbPFs for $P^{\delta}$, $\delta >0$ : the latter will all be deduced from the former with a simple multiplication by a constant (depending on the parameter $\delta$).

\begin{lm}\label{thm}
Let $m$ be a finite Borel measure on $[0,1]$, and consider the functions $\psi$ and $\hat{\psi}$ as in \eqref{psi} and \eqref{psi_hat}. Then, for all $h \in C^{2}_{c}(0,1)$ and $\delta > 0$, the following equality holds
\begin{equation}\label{laplace}
\int_{0}^{1} \sqrt{\psi_r \hat{\psi}_r} \left( h''_r \d r - 2 h_r \, m({\rm d} r) \right)
 = - \frac{1}{4} \psi_1^{2} \int_{0}^{1} h_r (\psi_r \hat{\psi}_r)^{-\frac{3}{2}} \d r.
\end{equation}   
\end{lm}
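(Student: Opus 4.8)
The plan is to prove the identity \eqref{laplace} by direct computation, using the two second-order linear ODEs (in the sense of measures) satisfied by $\psi$ and $\hat\psi$, namely $\psi''(\mathrm d r) = 2\psi_r\, m(\mathrm d r)$ and $\hat\psi''(\mathrm d r) = 2\hat\psi_r\, m(\mathrm d r)$, together with the boundary conditions $\psi_0=0$, $\psi_0'=1$, $\hat\psi_1=0$, $\hat\psi_1'=-1$. Write $g_r := \psi_r\hat\psi_r$, which is smooth on $(0,1)$, positive there, and vanishes at both endpoints. The key algebraic fact is that $\sqrt{g}$ satisfies a clean second-order relation: differentiating, $(\sqrt g)' = g'/(2\sqrt g)$ with $g' = \psi'\hat\psi + \psi\hat\psi'$, and then using the ODEs to replace second derivatives, the measure-valued second derivative of $\sqrt g$ should reduce to a multiple of $m$ plus an explicit absolutely continuous remainder. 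Concretely I expect to find
\[
(\sqrt{g})''(\mathrm d r) = 2\sqrt{g_r}\, m(\mathrm d r) - \tfrac14 (g'_r)^2 \, g_r^{-3/2}\,\mathrm d r + (\text{Wronskian terms}),
\]
where the Wronskian $W := \psi'\hat\psi - \psi\hat\psi'$ is \emph{constant} in $r$ (its derivative is $\psi''\hat\psi - \psi\hat\psi'' = 2\psi\hat\psi\, m - 2\psi\hat\psi\, m = 0$ as measures), and evaluating $W$ at $r=1$ gives $W = \psi_1'\cdot 0 - \psi_1\cdot(-1) = \psi_1$. Since $(g')^2 = (2\psi'\hat\psi)(2\psi\hat\psi')\cdot(\text{cross terms})$ can be rewritten via the polarization identity $(\psi'\hat\psi+\psi\hat\psi')^2 = (\psi'\hat\psi - \psi\hat\psi')^2 + 4\psi\hat\psi\,\psi'\hat\psi' $... rather, the cleaner route: $g'' $ as a measure equals $\psi''\hat\psi + 2\psi'\hat\psi' + \psi\hat\psi'' = 4\psi\hat\psi\,m + 2\psi'\hat\psi'\,\mathrm dr$, and one checks $2\psi'_r\hat\psi'_r = \frac{(g'_r)^2 - W^2}{2 g_r}$ using $W^2 = (g')^2 - 4g\,\psi'\hat\psi'$. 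Putting these together yields precisely $(\sqrt g)''(\mathrm d r) = 2\sqrt{g_r}\,m(\mathrm d r) - \tfrac14 \psi_1^2\, g_r^{-3/2}\,\mathrm d r$.

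Granting that pointwise (or rather measure-wise) identity, the proof of \eqref{laplace} is then an integration by parts against $h \in C^2_c(0,1)$. Since $h$ has compact support in the open interval, all boundary contributions at $0$ and $1$ vanish, so
\[
\int_0^1 \sqrt{g_r}\, h''_r\,\mathrm d r = \int_0^1 h_r \, (\sqrt g)''(\mathrm d r) = 2\int_0^1 \sqrt{g_r}\, h_r\, m(\mathrm d r) - \tfrac14\psi_1^2 \int_0^1 h_r\, g_r^{-3/2}\,\mathrm d r,
\]
and rearranging gives exactly \eqref{laplace}. One subtlety to address carefully: $g_r = \psi_r\hat\psi_r$ behaves like $r$ near $0$ and like $(1-r)$ near $1$ (since $\psi_0'=1$, $\hat\psi_1' = -1$), so $g_r^{-3/2}$ is like $r^{-3/2}$ near $0$, which is \emph{not} integrable on its own — but $h_r$ vanishes to second order near the endpoints of its support, and more to the point $h$ is supported away from $0$ and $1$, so $\int_0^1 h_r g_r^{-3/2}\,\mathrm d r$ is a perfectly finite integral over a compact subset of $(0,1)$ where $g$ is bounded below. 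Similarly $\sqrt{g_r}$ is smooth and bounded on that compact set, so every integral in sight is finite and the integration by parts is legitimate; I should state this to justify that all terms make sense.

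The main obstacle I anticipate is purely computational bookkeeping: correctly handling the measure-valued second derivatives (the products $\psi''\hat\psi$ and $\psi\hat\psi''$ are measures absolutely continuous with respect to $m$, while the cross term $2\psi'\hat\psi'\,\mathrm d r$ is Lebesgue-absolutely-continuous, and these must be combined without double-counting) and verifying the algebraic identity $2\psi'_r\hat\psi'_r \, g_r = \tfrac12\big((g'_r)^2 - \psi_1^2\big)$, i.e. $4\psi_r\hat\psi_r\,\psi'_r\hat\psi'_r = (\psi'_r\hat\psi_r + \psi_r\hat\psi'_r)^2 - (\psi'_r\hat\psi_r - \psi_r\hat\psi'_r)^2$, which is the elementary polarization identity $4AB = (A+B)^2 - (A-B)^2$ with $A = \psi'_r\hat\psi_r$, $B = \psi_r\hat\psi'_r$, combined with the constancy of the Wronskian $W_r = A - B = \psi'_r\hat\psi_r - \psi_r\hat\psi'_r \equiv \psi_1$. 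Once that identity is in hand the rest is routine. An alternative, perhaps cleaner, presentation would be to avoid $\sqrt g$ entirely: test the ODEs for $\psi$ and $\hat\psi$ directly against the function $h_r/\sqrt{g_r}$ (which is in $C^2_c$ of the relevant open set), but this seems to lead to the same computation, so I would favor the direct approach above.
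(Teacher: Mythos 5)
Your proof is correct and follows essentially the same route as the paper's: compute the measure-valued second derivative of $\sqrt{\psi\hat\psi}$, reduce the remainder to the square of the constant Wronskian $\psi'\hat\psi-\psi\hat\psi'=\psi_1$ via the polarization identity $4AB=(A+B)^2-(A-B)^2$ (the paper just completes the square directly, which is the same thing), and integrate by parts against $h\in C^2_c(0,1)$. Your extra remarks on integrability near the endpoints and the absolutely-continuous/singular split of the measures are sound and harmless additions, but the core argument is the paper's.
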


\begin{proof}
Performing an integration by parts, we can rewrite the left-hand side as
\[ \int_{0}^{1} h_r \left( \frac{\mathrm{d}^{2}}{\mathrm{d} r^{2}} - 2 \, m({\rm d} r) \right) \left( \psi_r \hat{\psi}_r \right)^{\frac{1}{2}}. \]
Note that here we are integrating wrt the signed measure
\[ \left( \frac{\mathrm{d}^{2}}{\mathrm{d} r^{2}} - 2 \, m({\rm d} r) \right) \left( \psi_r \hat{\psi}_r \right)^{\frac{1}{2}} = \frac{\mathrm{d}^{2}}{\mathrm{d} r^{2}} \left( \psi_r \hat{\psi}_r \right)^{\frac{1}{2}} - 2 \left( \psi_r \hat{\psi}_r \right)^{\frac{1}{2}} \, m({\rm d} r).\]
Now, we have
\begin{align*} 
\frac{\mathrm{d}^{2}}{\mathrm{d} r^{2}}  \left( \psi \hat{\psi} \right)^{\frac{1}{2}}  = \frac{1}{2} \frac{ \psi''\hat{\psi} + 2 \psi'\hat{\psi}' + \psi \hat{\psi}''}{(\psi\hat{\psi})^{\frac12}} - \frac{1}{4} \frac{(\psi'\hat{\psi} + \psi \hat{\psi}')^{2}}{(\psi\hat{\psi})^{3/2}}.
\end{align*}
Recalling that $\psi''= 2 \psi \, m$ and $\hat{\psi}''=2 \hat{\psi} \, m$, we obtain
\begin{align*} 
\left( \frac{\mathrm{d}^{2}}{\mathrm{d} r^{2}} - 2 \, m ({\rm d} r) \right) \left( \psi \hat{\psi} \right)^{\frac{1}{2}} = & \frac{\psi'\hat{\psi}'\psi \hat{\psi} - \frac{1}{4} (\psi'\hat{\psi} + \psi \hat{\psi}')^{2}}{(\psi\hat{\psi})^{3/2}} \\
= & -\frac{1}{4} \frac{(\psi'\hat{\psi} - \psi \hat{\psi}')^{2}}{(\psi\hat{\psi})^{3/2}}.
\end{align*}
Using the expressions \eqref{psi} and \eqref{psi_hat} for $\psi$ and $\hat{\psi}$, we easily see that 
\[
\psi'_r\hat{\psi}_r -\psi \hat{\psi}'_r = \psi_1, \qquad r \in (0,1).
\]
Hence, we obtain the following equality of signed measures:
\[ \left( \frac{\mathrm{d}^{2}}{\mathrm{d} r^{2}} - 2 \, m \right)  \left( \psi \hat{\psi} \right)^{\frac{1}{2}} = - \frac{1}{4} \frac{\psi_1^{2}}{(\psi_r\hat{\psi}_r)^{3/2}} \d r. \]
Consequently, the left-hand side in \eqref{laplace} is equal to 
\[ - \frac{1}{4} \psi_1^{2} \int_{0}^{1} \d r \ h_r  \left(\psi_r\hat{\psi}_r\right)^{-3/2}. \]
The claim follows.
\end{proof}

As a consequence, we obtain the following preliminary result.
\begin{lm}
Let $m$ be a finite measure on $[0,1]$, and let $\Phi:C([0,1]) \to \mathbb{R}$ be the functional thereto associated  as in \eqref{exp_functional}. Then, for all $\delta>0$ and $h \in C^{2}_{c}(0,1)$,
\begin{equation}\label{exp_fst_part_ibpf0}
\begin{split}
& E^{\delta} (\partial_{h} \Phi (X) ) + E^{\delta} (\langle h '' , X \rangle \, \Phi(X) ) =
\\ & =-\frac{\Gamma(\frac{\delta+1}{2})}{2^{\frac32}\,\Gamma(\frac{\delta}{2})} \, \psi_1^{-\frac{\delta-3}{2}}\int_{0}^{1} 
h_r \left(\psi_r\hat{\psi}_r\right)^{-\frac32} \d r,
\end{split}
\end{equation}
where $\psi$ and $\hat{\psi}$ are associated with $m$ as in \eqref{psi} and \eqref{psi_hat}. 
\end{lm}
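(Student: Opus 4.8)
The plan is to reduce both terms on the left-hand side of \eqref{exp_fst_part_ibpf0} to the single one-point functional $r\mapsto E^{\delta}[X_r\,\Phi(X)]$, $r\in(0,1)$, to compute the latter explicitly from \eqref{bridge2}, and finally to recognise the resulting combination as the left-hand side of the differential identity of Lemma \ref{thm}. For the first reduction, write $\Phi(\zeta)=\exp(-\langle m,\zeta^2\rangle)$ as in \eqref{exp_functional}; then an elementary computation gives $\partial_h\Phi(\zeta)=-2\,\langle m,\zeta h\rangle\,\Phi(\zeta)$ for $\zeta\in C([0,1])$ and $h\in C^2_c(0,1)$. To move $\partial_h$ past $E^{\delta}$ I would invoke dominated convergence: since $m\geq 0$ we have $|\Phi|\leq 1$ everywhere on $C([0,1])$, and the difference quotients $\varepsilon^{-1}\big(\Phi(X+\varepsilon h)-\Phi(X)\big)$ are bounded, uniformly for $|\varepsilon|\leq 1$, by $2\,m([0,1])\,\|h\|_\infty\big(\|X\|_\infty+\|h\|_\infty\big)$, which is $P^{\delta}$-integrable because $\|X\|_\infty$ has finite moments of all orders under the law of a Bessel bridge. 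Together with Fubini's theorem (likewise justified by $E^{\delta}[\|X\|_\infty]<\infty$), this gives
\[
E^{\delta}(\partial_h\Phi(X)) + E^{\delta}(\langle h'',X\rangle\,\Phi(X)) = \int_0^1 h''_r\,E^{\delta}[X_r\Phi(X)]\,\d r \;-\; 2\int_0^1 h_r\,E^{\delta}[X_r\Phi(X)]\,m(\d r).
\]

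Next I would compute $E^{\delta}[X_r\Phi(X)]$ through the measures $\Sigma^\delta_r$. Disintegrating over the value of $X_r$ and using the definition \eqref{Sigma}, one has $E^{\delta}[X_r\Phi(X)] = \int_0^\infty a^{\delta}\,\Sigma^\delta_r(\Phi(X)\,|\,a)\,\d a$. Inserting the closed form \eqref{bridge2}, namely $\Sigma^\delta_r(\Phi(X)\,|\,a) = 2^{1-\delta/2}\,\Gamma(\delta/2)^{-1}\,\exp(-\tfrac{a^2}{2}C_r)\,D_r^{\delta/2}$ with $C_r=\psi_1/(\psi_r\hat\psi_r)$ and $D_r=1/(\psi_r\hat\psi_r)$, and evaluating the Gaussian-type moment $\int_0^\infty a^{\delta}e^{-a^2 C_r/2}\,\d a = 2^{(\delta-1)/2}\,\Gamma\!\big(\tfrac{\delta+1}{2}\big)\,C_r^{-(\delta+1)/2}$, one obtains after simplification — the decisive cancellation being $D_r^{\delta/2}\,C_r^{-(\delta+1)/2} = \psi_1^{-(\delta+1)/2}\,(\psi_r\hat\psi_r)^{1/2}$, which follows from \eqref{psi} and \eqref{psi_hat} —
\[
E^{\delta}[X_r\Phi(X)] = \frac{\sqrt{2}\,\Gamma\!\big(\tfrac{\delta+1}{2}\big)}{\Gamma\!\big(\tfrac{\delta}{2}\big)}\;\psi_1^{-\frac{\delta+1}{2}}\,\sqrt{\psi_r\hat\psi_r}.
\]

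Substituting this into the display of the first step, the $r$-independent factor $\tfrac{\sqrt2\,\Gamma(\frac{\delta+1}{2})}{\Gamma(\delta/2)}\,\psi_1^{-(\delta+1)/2}$ comes out, and what remains inside is precisely $\int_0^1\sqrt{\psi_r\hat\psi_r}\,\big(h''_r\,\d r - 2h_r\,m(\d r)\big)$, the left-hand side of \eqref{laplace}. Applying Lemma \ref{thm} replaces it by $-\tfrac14\,\psi_1^2\int_0^1 h_r\,(\psi_r\hat\psi_r)^{-3/2}\,\d r$; collecting the powers of $\psi_1$ (note $-\tfrac{\delta+1}{2}+2=-\tfrac{\delta-3}{2}$) and the numerical constants ($\sqrt2/4=2^{-3/2}$) then yields exactly \eqref{exp_fst_part_ibpf0}.

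I do not expect a genuine obstacle: the statement is essentially an assembly of Lemma \ref{lap_cond_bridge} (used through \eqref{bridge2}) with the differential identity of Lemma \ref{thm}. The only points that require a little care are the dominated-convergence and Fubini justifications of the first step — which rest on the boundedness of $\Phi$ and on the integrability of $\|X\|_\infty$ under $P^{\delta}$ — and the bookkeeping of $\Gamma$-values and powers of $2$ and $\psi_1$ when matching the constants at the end.
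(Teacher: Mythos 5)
Your proposal is correct and follows essentially the same route as the paper: both reduce the left-hand side to a single $r$-integral of the one-point quantity $E^{\delta}[X_r\Phi(X)]$, evaluate this via the explicit Laplace transform of the pinned bridge and a Gaussian moment (the paper phrases this through $Q^{\delta}_{0,0}$ and \eqref{cond_bridge}, you through $\Sigma^\delta_r$ and \eqref{bridge2}, which are equivalent by definition), arrive at the intermediate expression $\sqrt{2}\,\tfrac{\Gamma(\frac{\delta+1}{2})}{\Gamma(\delta/2)}\psi_1^{-(\delta+1)/2}\int_0^1 \sqrt{\psi_r\hat\psi_r}\,(h''_r\,\d r - 2h_r\,m(\d r))$, and close by Lemma \ref{thm}. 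Your extra justifications of the interchange of $\partial_h$ with $E^\delta$ (by boundedness of $\Phi$ and finite moments of $\|X\|_\infty$) and of Fubini are sound and fill in steps the paper leaves implicit.
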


\begin{proof}
By the expression \eqref{exp_functional} for $\Phi$, we have
\[ \partial_{h} \Phi (X) = - 2 \langle X h , m \rangle \, \Phi(X).\]
Therefore
\begin{align*}
&E^{\delta} (\partial_{h} \Phi (X) ) + E^{\delta} (\langle h '' , X \rangle \, \Phi(X) ) =Q^{\delta}_{0,0}  \left[ \left( \langle h '',\sqrt{X} \rangle - 2 \langle h \sqrt{X},  m\rangle \right) \, e^{- \langle m, X \rangle} \right] =
\\ &=\int_{0}^{1} ( h''_{r} \d r  - 2 h_{r} \, m({\rm d} r) )
\int_{0}^{+ \infty} \! \Gamma\left(\frac{\delta}2,\frac1{2r(1-r)}\right)({\rm d}a) \sqrt{a} \, Q^{\delta}_{0,0}  \left[\left.e^{- \langle m, X \rangle} \, \right| \, X_{r} = a \right].
\end{align*}
By \eqref{cond_bridge} we obtain:
\begin{align*}
&E^{\delta} (\partial_{h} \Phi (X) ) + E^{\delta} (\langle h '', X \rangle \, \Phi(X))=
\\ &=  \int_{0}^{1} \, \left(h''_{r} \d r  - 2 h_{r} \, m({\rm d} r) \right) 
\frac{\Gamma(\frac{\delta+1}2)}{\Gamma(\frac{\delta}2)}\left(\frac{C_r}2\,\psi_1^\delta\right)^{-\frac12}
\int_{0}^{+ \infty}  \, \Gamma\left(\frac{\delta+1}2,\frac{C_r}2\right)({\rm d}a) 
\\ &= \sqrt{2}\,\frac{\Gamma(\frac{\delta+1}2)}{\Gamma(\frac{\delta}2)} \psi_1^{-\frac{\delta+1}2}\int_{0}^{1} {\rm d} r \, \left(h''_{r} \d r - 2 h_{r} \, m({\rm d} r) \right) \sqrt{\psi_r\hat\psi_r} .
\end{align*}
Finally, by \eqref{laplace}, the latter expression is equal to
\[
-\frac{\Gamma(\frac{\delta+1}{2})}{2^{\frac32}\,\Gamma(\frac{\delta}{2})} \, \psi_1^{-\frac{\delta-3}{2}}\int_{0}^{1} 
h_r \left(\psi_r\hat{\psi}_r\right)^{-\frac32} \d r
\]
and the proof is complete.
\end{proof}

Apart from the above lemma, the proof of the IbPF for $P^{\delta}$, $\delta > 0$, will require integral expressions for negative Gamma values. 
For all $x\in \mathbb{R}$ we set $\lfloor x \rfloor:=\sup\{k\in \mathbb{Z}: k \leq x\}$. We also use the notation $\Z^-:=\{n\in\Z: n\leq 0\}$.
\begin{lm}
\label{neg_gamma}
For all $x\in \mathbb{R}\setminus\Z^-$
\[ \Gamma (x) = \int_{0}^{\infty} t^{x-1} \mathcal{T}^{\,\lfloor - x \rfloor}_{t} (e^{- \, \cdot \,}) \d t.
\]
\end{lm}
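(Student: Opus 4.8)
The plan is to split according to the sign of $x$. When $x>0$ we have $\lfloor -x\rfloor\le -1<0$, so by the convention adopted in \eqref{eq:taylor} the Taylor ``remainder'' $\mathcal{T}^{\,\lfloor -x\rfloor}_{t}(e^{-\,\cdot\,})$ is simply $e^{-t}$ (the sum over $0\le j\le\lfloor -x\rfloor$ being empty), and the right-hand side reduces to the classical Euler integral $\int_{0}^{\infty}t^{x-1}e^{-t}\d t=\Gamma(x)$. So the only real content lies in the case $x<0$, $x\notin\Z$.

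For such $x$, set $k:=\lfloor -x\rfloor\ge 0$; since $-x\notin\Z$ this means precisely $-k-1<x<-k$. The function $e^{-\,\cdot\,}$ belongs to $\mathcal{S}([0,\infty))$, since all its derivatives decay faster than any polynomial, so Definition \ref{def_mu_alpha}, specifically \eqref{mu_neg_delta}, applies with $\varphi=e^{-\,\cdot\,}$ and gives
\[
\langle\mu_{x},e^{-\,\cdot\,}\rangle=\frac{1}{\Gamma(x)}\int_{0}^{\infty}\mathcal{T}^{\,k}_{t}(e^{-\,\cdot\,})\,t^{x-1}\d t .
\]
On the other hand, Remark \ref{laplace_mu_neg} (taking $\lambda=1$) gives $\langle\mu_{x},e^{-\,\cdot\,}\rangle=1$. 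Multiplying through by $\Gamma(x)$ yields exactly the asserted identity, and since $k=\lfloor -x\rfloor$ this is \eqref{neg_gamma}.

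If one prefers a self-contained argument not relying on Remark \ref{laplace_mu_neg}, one can argue directly: using $\varphi^{(j)}(0)=(-1)^{j}$ for $\varphi=e^{-\,\cdot\,}$ one checks that $\frac{\d}{\d t}\mathcal{T}^{\,j}_{t}(e^{-\,\cdot\,})=-\mathcal{T}^{\,j-1}_{t}(e^{-\,\cdot\,})$ (the infinitesimal form of Proposition \ref{thm_ibpf_mu}), and then integrates by parts $k+1$ times in $\int_{0}^{\infty}t^{x-1}\mathcal{T}^{\,k}_{t}(e^{-\,\cdot\,})\d t$. All boundary terms vanish because $\mathcal{T}^{\,j}_{t}(e^{-\,\cdot\,})=O(t^{j+1})$ as $t\downarrow 0$ and $O(t^{j})$ as $t\to\infty$, combined with $x+k+1>0$ and $x+k<0$. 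One is left with $\bigl(x(x+1)\cdots(x+k)\bigr)^{-1}\int_{0}^{\infty}t^{x+k}e^{-t}\d t=\frac{\Gamma(x+k+1)}{x(x+1)\cdots(x+k)}=\Gamma(x)$.

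The only delicate point is the bookkeeping of the boundary terms in the repeated integration by parts — or, in the first approach, verifying the hypotheses of \eqref{mu_neg_delta} and the applicability of Remark \ref{laplace_mu_neg} to the particular test function $e^{-\,\cdot\,}$. This is routine given the Taylor estimate already used to show that \eqref{mu_neg_delta} defines a bona fide distribution, so I expect no genuine obstacle.
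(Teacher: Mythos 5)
Your primary argument is the paper's proof verbatim: apply Remark \ref{laplace_mu_neg} at $\lambda=1$ to $e^{-\,\cdot\,}$, recognise the integral via \eqref{mu_neg_delta}, and multiply through by $\Gamma(x)$. The separate treatment of $x>0$ is harmless but not needed, since for $\alpha>0$ the same identity reduces to the classical Euler integral $\langle\mu_\alpha,e^{-\cdot}\rangle=1$. Your self-contained alternative — iterating $\frac{\d}{\d t}\mathcal{T}^{\,j}_{t}(e^{-\cdot})=-\mathcal{T}^{\,j-1}_{t}(e^{-\cdot})$ through $k+1$ integrations by parts, with boundary terms killed by $\mathcal{T}^{\,j}_{t}(e^{-\cdot})=O(t^{j+1})$ as $t\downarrow 0$ and $O(t^{j})$ as $t\to\infty$ together with $x+k+1>0>x+k$ — is also correct, and has the mild advantage of not leaning on Remark \ref{laplace_mu_neg}, which in turn relies on Proposition \ref{thm_ibpf_mu} (cited without proof from \cite{gelfand1964generalized}).
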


\begin{proof}
By Remark \ref{laplace_mu_neg} we have
\[
\int_{0}^{\infty} t^{x-1} \mathcal{T}^{\,\lfloor - x \rfloor}_{t} (e^{- \, \cdot \,}) \d t
=\Gamma(x) \,\langle \mu_{\alpha} , e^{-\cdot } \rangle=\Gamma(x) \,1^{x}=\Gamma(x),
\] 
and the claim  follows.  
\end{proof}
From Lemma \ref{neg_gamma} we obtain for all $C>0$, $x\in \mathbb{R}\setminus\Z^-$
\begin{equation}\label{forallx}
\Gamma(x) \, C^{-x} = 2^{1-x}\int_0^{+\infty} a^{2x-1} \left( e^{-C\frac{a^2}2} - \sum_{0\leq j\leq \lfloor -x \rfloor}  \frac{(-C)^{j} a^{2j}}{2^jj !} \right) \d a
\end{equation}
by a simple change of variable $t=Cb^2/2$.
Then \eqref{forallx} can be rewritten as follows
\begin{equation}\label{forallx'}
\Gamma(x) \, C^{-x} = 2^{1-x}\int_0^{+\infty} a^{2x-1} \, \mathcal{T}^{\, 2\lfloor - x \rfloor}_{a}  \left( e^{-C\frac{(\cdot)^2}2}  \right) \d a,
\quad x\in \mathbb{R}\setminus\Z^-.
\end{equation}
We can finally prove the main statement of this section.
\begin{proof}[Proof of Theorem \ref{statement_ibpf}]
Let first $\delta>0$ and $\delta\notin\{1,3\}$. 
Then by \eqref{exp_fst_part_ibpf0}
\[
\begin{split}
& E^{\delta} (\partial_{h} \Phi (X) ) + E^{\delta} (\langle h '' , X \rangle \, \Phi(X) ) =
\\ &= - \frac{\Gamma(\frac{\delta+1}{2})}{2^{3/2} \,\Gamma(\frac{\delta}{2})} \, \int_{0}^{1}  h_{r}\, \left(\frac{\psi_1}{\psi_r\hat\psi_r}\right)^{\frac{3-\delta}{2}}  \,  \left(\psi_r\hat{\psi}_r\right)^{-\frac{\delta}{2}}\d r 
 \\ &= - \frac{\Gamma(\frac{\delta+1}{2})}{2^{3/2} \,\Gamma(\frac{\delta}{2})} \, \int_{0}^{1}  h_{r}\,  C_{r}^{\frac{3-\delta}{2}}  \, 
D_{r}^{\delta/2}\d r 
  \\ & = - \frac{\Gamma(\frac{\delta+1}{2})}{2^{3/2} \Gamma(\frac{\delta}{2})} \, \frac{ 2^{\frac{5-\delta}{2}}}{\Gamma\left(\frac{\delta-3}{2}\right)} \int_{0}^{1} h_{r} \,D_{r}^{\delta/2}
 \int_0^\infty a^{\delta-4} \, \mathcal{T}^{\,2k}_{a} e^{-\frac{C_r}{2} (\cdot)^{2}}\d  a \d r,
\end{split} \] 
where we used \eqref{forallx'} with $C=C_{r}$ and $x = \frac{\delta-3}{2}$ to obtain the last line. Recalling the expression \eqref{bridge2} for $\Sigma^\delta_r(\Phi(X)\,|\,a)$, we thus obtain 
\[\begin{split}
& E^{\delta} (\partial_{h} \Phi (X) ) + E^{\delta} (\langle h '' , X \rangle \, \Phi(X) ) =
\\& = - \frac{\Gamma(\frac{\delta+1}{2})}{\Gamma(\frac{\delta-3}{2})} \int_{0}^{1} h_{r} 
 \int_0^\infty a^{\delta-4} \, \mathcal{T}^{\,k}_{2a} \, \Sigma^\delta_r(\Phi(X)\,|\,a) \,\d  a \d r.
\end{split}
\]
Now, since $\delta\notin\{1,3\}$,
\[
\textstyle{\Gamma(\frac{\delta+1}{2}) = \frac{\delta-1}{2}\, \Gamma(\frac{\delta-1}{2}) =
\frac{\delta-1}{2}\, \frac{\delta-3}{2}\, \Gamma(\frac{\delta-3}{2}) = \kappa(\delta)\, \Gamma(\frac{\delta-3}{2}).
}
\]
Therefore $\frac{\Gamma(\frac{\delta+1}{2})}{\Gamma(\frac{\delta-3}{2})}=\kappa(\delta)$ and we obtain the claim. 

There remains to treat the critical cases $\delta \in \{1,3\}$. 
By linearity, we may assume that $\Phi$ is of the form \eqref{exp_functional}.
For $\delta=3$ we have by \eqref{exp_fst_part_ibpf0}
\[
\begin{split}
& E^{3} (\partial_{h} \Phi (X) ) + E^{3} (\langle h '' , X \rangle \, \Phi(X) ) 
=  -\frac{1}{2^{\frac32}\,\Gamma(\frac3{2})} \int_{0}^{1} h_{r} \left(\psi_r\hat{\psi}_r\right)^{-\frac32}\d r.
\end{split}
\]
By \eqref{bridge2} this equals
\[ - \frac{1}{2} \int_{0}^{1} \d r \, h_r \, \Sigma^3_r(\Phi(X) \,|\, 0)
\]
and the proof is complete.
For $\delta=1$, by \eqref{exp_fst_part_ibpf0}, we have
\[E^{1} (\partial_{h} \Phi (X) + \langle h '' , X \rangle \, \Phi(X) )  = -\frac1{2\sqrt{2\pi}} \, \psi_1\int_{0}^{1} 
h_r \left(\psi_r\hat{\psi}_r\right)^{-\frac32} \d r. \]
But by \eqref{bridge2} we have, for all $r \in (0,1)$
\[
 \frac{{\rm d}^{2}}{{\rm d} a^{2}} \, \Sigma^1_r (\Phi(X) \, | \, a)\,  \biggr\rvert_{a=0} = -
\frac{C_r \, D_r^{\frac12}}{2^{-\frac12}\,\Gamma(\frac{1}{2})} = -\sqrt{\frac{2}{\pi}}\,  \psi_1 \left(\psi_r\hat{\psi}_r \right)^{-\frac{3}{2}}.
\]
The claimed IbPF follows.
\end{proof}

\begin{rk}\label{weaker}
In \cite{zambotti2005integration} for the reflecting Brownian motion, and then in \cite{grothaus2016integration} for the Reflecting Brownian bridge,
a different formula was proved in the case $\delta=1$. In our present notations, for $(\beta_r)_{r\in[0,1]}$ a Brownian bridge and $X:=|\beta|$,
the formula reads 
\begin{equation}\label{|beta|}
\E(\partial_h\Phi(X)) + \E(\langle h '' , X \rangle \, \Phi(X) )= \lim_{\epsilon\to 0}2 \, \E\left(\Phi(X)\int_0^1 h_r\left[ \left(\dot{\beta^\epsilon_r}\right)^2
-c^\epsilon_r\right] {\rm d} L^0_r \right),
\end{equation}
where $\Phi: H\to\R$ is any Lipschitz function, $h\in C^2_0(0,1)$, $L^0$ is the standard local time of $\beta$ at $0$ and for some even smooth mollifier $\rho_\epsilon$ we set
\[
\beta^\epsilon:=\rho_\epsilon*\beta, \qquad c^\epsilon_r:=\frac{\|\rho\|_{L^{2}(0,1)}^{2}}{\epsilon}.
\]
The reason why \eqref{|beta|} is strictly weaker than \eqref{exp_fst_part_ibpf_a_b_1}, is that the former depends explicitly on $\beta$, while the 
latter is written only in terms of $X$. This will become crucial when we compute the SPDE satisfied by $u$ for $\delta=1$ in Theorem \ref{fukushima_decomposition} below. 
\end{rk}

As a consequence of Theorem \ref{statement_ibpf}, we retrieve the following known results, see Chapter 6 of \cite{zambotti2017random} and \eqref{ibpf_larger_three}-\eqref{ibpf_three} above.
\begin{prop}
\label{already_known_ibpf0}
Let $\Phi \in \mathcal{S}$ and $h \in C^{2}_{c}(0,1)$. Then, for all $\delta > 3$, the following IbPF holds
\[
 E^{\delta}(\partial_{h} \Phi (X) ) + E^{\delta}(\langle h '' , X \rangle \, \Phi(X) ) = - \kappa(\delta) \, E^{\delta} (\langle h , X^{-3} \rangle \, \Phi(X) ). 
\]
Moreover, for $\delta = 3$, the following IbPF holds
\[
\begin{split}
& E^{3}(\partial_{h} \Phi (X) )+ E^{3}(\langle h '' , X \rangle \, \Phi(X) ) =
\\ & = - \int_{0}^{1} \d r \,  \frac{h_r}{\sqrt{2 \pi r^{3} (1-r)^{3}}} \, E^{3} [\Phi(X) \, | \, X_{r} = 0].
\end{split} 
\]
\end{prop}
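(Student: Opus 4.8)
The plan is to derive both formulae directly from Theorem \ref{statement_ibpf}, specialising the generic identities \eqref{exp_fst_part_ibpf_a_b} and \eqref{exp_fst_part_ibpf_a_b_3} and then unwinding the definition \eqref{Sigma} of $\Sigma^\delta_r(\,\cdot\,|\,a)$ back into a genuine integral against the law $P^\delta$. By linearity it suffices to treat $\Phi$ of the form \eqref{exp_functional}. I would handle $\delta>3$ first and the endpoint $\delta=3$ afterwards.

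For $\delta>3$ one first observes that $k=\lfloor\frac{3-\delta}{2}\rfloor\leq -1$, so that $2k<0$ and hence, by the very definition \eqref{eq:taylor}, $\mathcal{T}^{\,2k}_{a}\,\Sigma^\delta_r(\Phi(X)\,|\,\cdot\,)=\Sigma^\delta_r(\Phi(X)\,|\,a)$, i.e. no Taylor subtraction survives. Thus the right-hand side of \eqref{exp_fst_part_ibpf_a_b} becomes $-\kappa(\delta)\int_0^1 h_r\int_0^\infty a^{\delta-4}\,\Sigma^\delta_r(\Phi(X)\,|\,a)\,\mathrm{d}a\,\mathrm{d}r$. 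Now I would plug in \eqref{Sigma}, which gives $a^{\delta-4}\,\Sigma^\delta_r(\Phi(X)\,|\,a)=a^{-3}\,p^{\delta}_r(a)\,E^{\delta}[\Phi(X)\,|\,X_r=a]$; integrating in $a$ recovers $E^{\delta}[X_r^{-3}\,\Phi(X)]$, since $p^\delta_r$ is the density of $X_r$ under $P^\delta$ by \eqref{one_pt_density_bridge_00}. It then remains to swap the $r$-integral and the expectation, i.e. $\int_0^1 h_r\,E^{\delta}[X_r^{-3}\Phi(X)]\,\mathrm{d}r=E^{\delta}[\langle h,X^{-3}\rangle\,\Phi(X)]$; this is Fubini/Tonelli, valid because $\langle |h|,X^{-3}\rangle$ is $P^\delta$-integrable for $\delta>3$ (from $p^\delta_r(a)\asymp a^{\delta-1}$ near $a=0$, the map $r\mapsto E^\delta[X_r^{-3}]$ is locally integrable on $(0,1)$, and $h$ has compact support there), and $\Phi$ is bounded. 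Combining with the prefactor $\kappa(\delta)$ yields the first claimed identity.

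For $\delta=3$ I would start from \eqref{exp_fst_part_ibpf_a_b_3} and merely evaluate the constant: by the Remark following \eqref{Sigma}, $\frac{p^{3}_r(a)}{a^{2}}\big|_{a=0}=\frac{1}{2^{1/2}\Gamma(3/2)(r(1-r))^{3/2}}$, and since $\Gamma(3/2)=\frac{\sqrt\pi}{2}$ this equals $\frac{2}{\sqrt{2\pi}\,(r(1-r))^{3/2}}$; hence $\tfrac12\,\Sigma^{3}_r(\Phi(X)\,|\,0)=\frac{1}{\sqrt{2\pi r^{3}(1-r)^{3}}}\,E^{3}[\Phi(X)\,|\,X_r=0]$, which is exactly the asserted right-hand side. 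The only genuinely delicate point is the integrability/Fubini step in the case $\delta>3$: one must make sure that the iterated integral really factors as an expectation of $\langle h,X^{-3}\rangle\Phi(X)$, and not merely a principal-value version; this is where the hypothesis $\delta>3$ (equivalently $\kappa(\delta)>0$ together with $\int_0\! a^{\delta-4}p^\delta_r(a)\,\mathrm{d}a<\infty$) is used, and it is precisely the property that breaks down at $\delta=3$, forcing the boundary term $E^3[\Phi(X)\,|\,X_r=0]$ instead. The constant bookkeeping in the $\delta=3$ case (tracking the powers of $2$ and the $\Gamma$-values) is routine but should be done carefully.
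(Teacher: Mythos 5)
Your proposal is correct and follows essentially the same route as the paper's own proof: for $\delta>3$ you specialise \eqref{exp_fst_part_ibpf_a_b} using $2k<0$ so the Taylor remainder is trivial, then unwind \eqref{Sigma} and \eqref{one_pt_density_bridge_00} to recover $E^\delta(\langle h,X^{-3}\rangle\Phi(X))$; for $\delta=3$ you specialise \eqref{exp_fst_part_ibpf_a_b_3} and evaluate the constant $\tfrac12 p^3_r(a)/a^2\vert_{a=0}=(2\pi r^3(1-r)^3)^{-1/2}$. Your explicit Fubini/integrability justification, while not spelled out in the paper, is a welcome addition and correctly identifies where $\delta>3$ is used.
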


\begin{proof}
 For $\delta>3$ we have $k:=\lfloor \frac{3-\delta}{2} \rfloor < 0$, and by \eqref{exp_fst_part_ibpf_a_b} 
  \[
 \begin{split}
& E^{\delta} (\partial_{h} \Phi (X) ) + E^{\delta} (\langle h '' , X \rangle \, \Phi(X) )= 
 \\ & = -\kappa(\delta)\int_{0}^{1}  
 h_{r}  \int_0^\infty a^{\delta-4} \, \Sigma^\delta_r(\Phi (X)\,|\, a) 
  \d a \d r 
  \\ & = -\kappa(\delta)\int_{0}^{1}  
 h_{r}  \int_0^\infty a^{-3} \, p^{\delta}_{r}(a)\, E^{\delta}[\Phi(X) \, | \, X_{r} = a]
  \d a \d r 
  \\ & = - \kappa(\delta) \, E^{\delta}(\langle h , X^{-3} \rangle \, \Phi(X) ). 
\end{split}
\]
For $\delta=3$, it suffices to note that, for all $r \in (0,1)$
\[ \frac{1}{2} \, \lim_{\epsilon \downarrow0} \,  \frac{p^{3}_{r}(\epsilon)}{\epsilon^{2}} = \frac{1}{\sqrt{2 \pi r^{3} (1-r)^{3}}},  \]
so that
\[\frac{1}{2} \, \Sigma^3_r(\Phi (X) \,|\, 0\,)  = \frac{1}{\sqrt{2 \pi r^{3} (1-r)^{3}}} E^{3}[\Phi(X) \, | \, X_{r} = 0], \]
and the proof is complete thanks to \eqref{exp_fst_part_ibpf_a_b_3}.
\end{proof}

\section{The dynamics via Dirichlet forms}
\label{sect_Dirichlet}

In this section we exploit the IbPF obtained above to construct a weak version of the gradient dynamics associated with $P^{1}$, using the theory of Dirichlet forms. The reason for considering the particular value $\delta=1$ is that we can exploit a representation of the Bessel bridge in terms of a Brownian bridge, for which the corresponding gradient dynamics is well-known and corresponds to a linear stochastic heat equation. This representation was already used in \cite{vosshallthesis} which constructed a quasi-regular Dirichlet form associated with $P^1$, a construction which does not follow from the IbPF \eqref{exp_fst_part_ibpf_a_b_1} due to the distributional character of its last term. Using this construction, we exploit the IbPF \eqref{exp_fst_part_ibpf_a_b_1} to prove that the associated Markov process, at equilibrium, satisfies \eqref{formal1}. The treatment of the particular value $\delta=1$ is also motivated by potential applications to scaling limits of dynamical critical pinning models, see e.g. \cite{vosshallthesis} and \cite{deuschel2018scaling}.

For the sake of our analysis, instead of working on the Banach space $C([0,1])$, it shall actually be more convenient to work on the Hilbert space $H:=L^{2}(0,1)$ endowed with the $L^2$ inner product
\[ \langle f, g \rangle = \int_0^1 f_r\, g_r \d r, \quad f,g \in H. \]
We shall denote by $\| \cdot \|$ the corresponding norm on $H$. Moreover we denote by $\mu$ the law of $\beta$ on $H$, where $\beta$ is a Brownian bridge from $0$ to $0$ over the interval $[0,1]$. We shall use the shorthand notation $L^{2} (\mu)$ for the space $L^2(H,\mu)$.

\subsection{The one-dimensional random string}

Consider the Ornstein-Uhlenbeck semigroup $(\mathbf{Q}_{t})_{t \geq 0}$ on $H$ defined, for all $F \in L^{2} (\mu)$ and $z \in H$, by
\[ \mathbf{Q}_{t} F (z) := \mathbb{E} \left[ F(v_{t}(z)) \right], \quad t \geq 0,
\]
where $(v_{t}(z))_{t \geq 0}$ is the solution to the stochastic heat equation on $[0,1]$ with initial condition $z$, and with homogeneous Dirichlet boundary conditions
\begin{equation}
\label{solution_she}
 \begin{split}
\begin{cases}
\frac{\partial v}{\partial t} = \frac{1}{2} \frac{\partial^{2} v}{\partial x^{2}} + \xi \\
v(0,x) = z(x), \qquad & x\in[0,1] \\
v(t,0)= v(t,1)= 0, \qquad & t > 0  
\end{cases}
\end{split}
\end{equation}
with $\xi$ a space-time white noise on $\mathbb{R}_{+} \times [0,1]$. Recall that $v$ can be written explicitly in terms of the fundamental solution $(g_{t}(x,x'))_{t \geq 0, \, x,x' \in (0,1)}$ of the stochastic heat equation with homogeneous Dirichlet boundary conditions on $[0,1]$, which by definition is the unique solution to
\[ \begin{split}
\begin{cases}
\frac{\partial g}{\partial t} = \frac{1}{2} \frac{\partial^{2} g}{\partial x^{2}} \\
g_{0}(x,x') = \delta_{x}(x') \\
g_{t}(x,0)= g_{t}(x,1)= 0.  
\end{cases}
\end{split}\]
Recall further that $g$ can be represented as follows:
\[
\forall t >0, \quad \forall x, x' \geq 0, \quad g_{t}(x,x') = \sum_{k=1}^{\infty} e^{-\frac{\lambda_{k}}{2}t} e_{k}(x) e_{k}(x'), 
\]
where $(e_{k})_{k \geq 1}$ is the complete orthornormal system of $H$ given by
\[e_{k}(x) := \sqrt{2} \sin(k \pi x), \quad x \in [0,1], \quad k \geq 1\] 
and $\lambda_{k} := k^{2} \pi^{2}$, $k \geq 1$.
We can then represent $u$ as follows:
\begin{equation}
\label{expr_solution_she}
v(t,x) = z(t,x) + \int_{0}^{t} \int_{0}^{1} g_{t-s}(x,x') \, \xi ({\rm d} s, \d x'), 
\end{equation}
where 
\begin{equation}
\label{expr_solution_he}
z(t,x) := \int_{0}^{1} g_{t}(x,x') z(x') \d x',
\end{equation}
and where the double integral is a stochastic convolution. In particular, it follows from this formula that $v$ is a Gaussian process. An important role will be played by its covariance function. Namely, for all $t \geq 0$ and $x,x' \in (0,1)$, we set
\[ q_{t}(x,x') := \text{Cov}(v(t,x) , v(t,x')) = \int_{0}^{t} g_{2 \tau}(x,x') \d \tau. \]
We also set
\[  q_{\infty}(x,x') := \int_{0}^{\infty} g_{2 \tau}(x,x') \d \tau=\mathbb{E}[\beta_x\beta_{x'}]
= x \wedge x' - x x' . \]
For all $t \geq 0$, we set moreover
\[ q^{t}(x,x') := q_{\infty}(x,x') - q_{t}(x,x') = \int_{t}^{\infty} g_{2 \tau}(x,x') \d \tau.\]
When $x=x'$, we will use the shorthand notations $q_{t}(x), q_{\infty}(x)$ and $q^{t}(x)$ instead of $q_{t}(x,x), q_{\infty}(x,x)$ and $q^{t}(x,x)$ respectively. Finally, we denote by $(\Lambda,D(\Lambda))$ the Dirichlet form associated with $(\mathbf{Q}_{t})_{t \geq 0}$ in $L^{2} (H,\mu)$, and which is given by
\[ \Lambda(F,G) = \frac{1}{2} \int_{H} \langle \nabla F, \nabla G \rangle \d \mu, \quad F,G \in D(\Lambda) = W^{1,2}(\mu), 
\] 
where we recall that $\mu$ denotes the law of a standard Brownian bridge on $[0,1]$. Here, for all $F \in W^{1,2}(\mu)$, $\nabla F: H \to H$ is the gradient of $F$, see \cite{dpz3}. The corresponding family of resolvents $(\mathbf{R}_\lambda)_{\lambda>0}$ is then given by
\[ \mathbf{R}_\lambda F (z) = \int_0^\infty e^{-\lambda t} \mathbf{Q}_{t} F(z) \d t, \quad z \in H, \, \lambda >0, \qquad F \in L^{2}(\mu).
\]

\subsection{Dirichlet form}

In this section we introduce the Dirichlet form associated with our equation \eqref{spde=1} and the associated Markov process $(u_t)_{t\geq 0}$. We stress that these objects were already constructed in \cite[Chap. 5]{vosshallthesis}.

Let $\mathcal{F} \mathcal{C}^{\infty}_{b}(H)$ denote the space of all functionals $F:H \to \mathbb{R}$ of the form
\begin{equation}\label{Fexp} 
F (z) = \psi(\langle l_{1}, z \rangle, \ldots, \langle l_{m}, z \rangle ), \quad z \in H, 
\end{equation} 
with $m \in \mathbb{N}$, $\psi \in C^{\infty}_{b}(\mathbb{R}^{m})$, and $l_{1}, \ldots, l_{m} \in \text{Span} \{ e_{k}, k \geq 1 \}$. Since Bessel bridges are \textit{nonnegative} processes, we are led to also introduce the closed subset $K \subset H$ of nonnegative functions
\[K:= \{ z \in H, \, \, z \geq 0 \, \, \text{a.e.} \}. \] 
Note that $K$ is a Polish space. We also define:
\[\mathcal{F} \mathcal{C}^{\infty}_{b}(K) := \left\{ F \big \rvert_{K} \, , \ F \in \mathcal{F} \mathcal{C}^{\infty}_{b}(H) \right\}. 
\]
Moreover, for $f \in \mathcal{F} \mathcal{C}^{\infty}_{b}(K)$ of the form $f=F \big \rvert_{K}$, with $F \in \mathcal{F} \mathcal{C}^{\infty}_{b}(H)$, we define $\nabla f : K \to H$ by
\[ \nabla f (z) = \nabla F(z), \quad z \in K, \]
where this definition does not depend on the choice of $F\in \mathcal{F} \mathcal{C}^{\infty}_{b}(H)$ such that $f=F\big \rvert_{K}$. We further denote by $\nu$ the law, on $K$, of the $1$-Bessel bridge from $0$ to $0$ on $[0,1]$ (so that $P^1$ is then the restriction of $\nu$ to $C([0,1])$). We shall use the shorthand $L^2(\nu)$ to denote the space $L^2(K,\nu)$. Denoting by $j : H \to K$ the absolute value map
\begin{equation}
\label{absolute_value_map}
j(z) := |z|, \quad z \in H,
\end{equation}
we remark that the map $L^{2}(\nu)\ni\varphi\mapsto \varphi \circ j\in L^{2}(\mu)$
is an isometry. Let us finally denote by $\mathcal{E}$ the bilinear form defined on $\mathcal{F} \mathcal{C}^{\infty}_{b}(K)$ by
\[ \mathcal{E}(f,g) := \frac{1}{2} \int_{K} \langle \nabla f , \nabla g \rangle \d \nu, \qquad f,g \in \mathcal{F} \mathcal{C}^{\infty}_{b}(K). 
\]

\begin{prop}
\label{closability}
The form $(\mathcal{E},\mathcal{F} \mathcal{C}^{\infty}_{b}(K))$ is closable. Its closure $(\mathcal{E},D (\mathcal{E}))$ is a local, quasi-regular Dirichlet form on $L^{2}(\nu)$. In addition, for all $f \in D (\mathcal{E})$, $f \circ j \in D(\Lambda)$, and we have
\begin{equation}
\label{isometry}
\forall f,g \in D(\mathcal{E}), \quad \mathcal{E}(f,g) = \Lambda(f \circ j, g \circ j). 
\end{equation}
\end{prop}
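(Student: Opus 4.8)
The plan is to transfer the whole statement, through the absolute value map $j$, to the Ornstein--Uhlenbeck form $(\Lambda,W^{1,2}(\mu))$, which is a closed, local, quasi-regular Dirichlet form on $L^2(\mu)$ (see \cite{dpz3}). Recall that $U\colon L^2(\nu)\to L^2(\mu)$, $Uf:=f\circ j$, is a linear isometry, since $\nu$ is the image of $\mu$ under $j$. The first point is a chain-rule computation: for $F\in\mathcal{F}\mathcal{C}^\infty_b(H)$ the map $F\circ j$ is Lipschitz on $H$ (as $j$ is $1$-Lipschitz and $F$ is Lipschitz), hence lies in $W^{1,2}(\mu)=D(\Lambda)$, and differentiating $\epsilon\mapsto F(|z+\epsilon e_n|)$ at $\epsilon=0$ — using that $\mu$-a.e.\ path $z$ is continuous with Lebesgue-null zero set, so that $\frac{\d}{\d\epsilon}\big|_0|z_r+\epsilon(e_n)_r|=\mathrm{sgn}(z_r)(e_n)_r$ for a.e.\ $r$, together with dominated convergence in $L^2(0,1)$ and the chain rule — gives
\[ \nabla(F\circ j)(z)=\mathrm{sgn}(z)\,\nabla F(|z|),\qquad \mu\text{-a.e.},\]
where $\mathrm{sgn}(z)$ acts by pointwise multiplication on $H$. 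Since $\mathrm{sgn}(z_r)^2=1$ for a.e.\ $r$, $\mu$-a.s., one deduces $\langle\nabla(Uf)(z),\nabla(Ug)(z)\rangle=\langle\nabla f(|z|),\nabla g(|z|)\rangle$ for $\mu$-a.e.\ $z$, and integrating against $\mu$ (using $j_\ast\mu=\nu$) yields the intertwining $\Lambda(Uf,Ug)=\mathcal{E}(f,g)$ for all $f,g\in\mathcal{F}\mathcal{C}^\infty_b(K)$.

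Closability is then immediate: if $f_n\in\mathcal{F}\mathcal{C}^\infty_b(K)$ with $f_n\to0$ in $L^2(\nu)$ and $(f_n)$ is $\mathcal{E}$-Cauchy, then $(Uf_n)$ is $\Lambda_1$-Cauchy and $Uf_n\to0$ in $L^2(\mu)$, so by closedness of $\Lambda$ one has $Uf_n\to0$ in $D(\Lambda)$, hence $\mathcal{E}(f_n,f_n)=\Lambda(Uf_n,Uf_n)\to0$. The same reasoning shows that $U$ extends to an isometry of $(D(\mathcal{E}),\mathcal{E}_1)$ onto a closed subspace of $D(\Lambda)$, so that $f\circ j\in D(\Lambda)$ and $\mathcal{E}(f,g)=\Lambda(f\circ j,g\circ j)$ for all $f,g\in D(\mathcal{E})$; this is the last assertion \eqref{isometry}. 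That the closure is a local Dirichlet form is routine for a closed gradient form: Markovianity follows by applying smooth $1$-Lipschitz approximations of $t\mapsto(0\vee t)\wedge1$ to functions in $\mathcal{F}\mathcal{C}^\infty_b(K)$ and passing to the limit (with a Banach--Saks argument, then by density), while locality follows from $\langle\nabla f,\nabla g\rangle=0$ $\nu$-a.e.\ whenever $g$ is constant on a neighbourhood of the support of $f$.

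The main obstacle is quasi-regularity, which I expect to require a genuinely new argument (the one attributed to R.\ and X.\ Zhu); it cannot simply be pulled back from $\Lambda$, since the OU semigroup does not leave $U(L^2(\nu))$ invariant — the heat flow mixes the excursions of a path with those of its reflection — so $\mathcal{E}$ is \emph{not} the image of $\Lambda$, in accordance with Theorem \ref{dist_res}. The natural route is to construct an $\mathcal{E}$-nest of compacts directly: for $\alpha\in(0,\tfrac12)$ the sets $K_n:=\{z\in C([0,1]):z\geq0,\ \|z\|_{C^\alpha}\leq n\}$ are compact in $(K,\|\cdot\|_H)$ by the Arzel\`a--Ascoli theorem and the compact embedding $C^\alpha\hookrightarrow H$, and $\nu\big(\bigcup_nK_n\big)=1$ with $\nu\big(\|z\|_{C^\alpha}>n\big)=\mathbb{P}\big(\||\beta|\|_{C^\alpha}>n\big)\leq\mathbb{P}\big(\|\beta\|_{C^\alpha}>n\big)$, a Fernique-type Gaussian tail. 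One then has to check $\mathrm{Cap}_{\mathcal{E}}(K\setminus K_n)\to0$, by approximating $z\mapsto\|z\|_{C^\alpha}$ from below by finite maxima of H\"older quotients $|z_s-z_t|/|s-t|^\alpha$ over grids (which lie in $D(\mathcal{E})$, being maxima of cylinder functions, with gradient bounded by $1$ $\nu$-a.e.), composing with a $1$-Lipschitz cutoff equal to $1$ on $[1,\infty)$, and bounding the resulting $\mathcal{E}_1$-norm by $\lesssim(1+n^{-2})\,\nu\big(\|z\|_{C^\alpha}>n/2\big)$. The remaining requirements are soft: $\mathcal{F}\mathcal{C}^\infty_b(K)$ is $\mathcal{E}_1$-dense in $D(\mathcal{E})$ and consists of $\|\cdot\|_H$-continuous, hence $\mathcal{E}$-quasi-continuous, functions, and the countable family $z\mapsto\arctan(\langle e_k,z\rangle)$, $k\geq1$, lies in $\mathcal{F}\mathcal{C}^\infty_b(K)$ and separates the points of $K$. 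Together with the nest, these give quasi-regularity.
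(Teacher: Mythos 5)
Your chain--rule computation, the isometry $U\colon L^2(\nu)\to L^2(\mu)$, the deduction of closability from closedness of $\Lambda$, the extension of the identity $\mathcal{E}(f,g)=\Lambda(f\circ j,g\circ j)$ to $D(\mathcal{E})$, and the locality and Markovianity arguments all match the paper's proof and are correct. You are also right that quasi-regularity cannot be pulled back directly from $\Lambda$, because the image $U(D(\mathcal{E}))$ is a proper closed subspace of $D(\Lambda)$ that the Ornstein--Uhlenbeck semigroup does not preserve; this is the genuinely non-trivial part, and it is exactly where your proposal breaks down.

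The difficulty is in the capacity estimate. You want to cap off the complements of the H\"older balls $K_n=\{z\geq 0,\ \|z\|_{C^\alpha}\leq n\}$ by functions built from the H\"older quotients $z\mapsto |z_s-z_t|/|s-t|^\alpha$, claiming these are ``maxima of cylinder functions, with gradient bounded by $1$''. Neither half of that parenthetical holds. The evaluation $z\mapsto z_s$ is not a cylinder functional in the sense of $\mathcal{F}\mathcal{C}^\infty_b(K)$, because that would require $\delta_s\in\mathrm{Span}\{e_k\}\subset H=L^2(0,1)$; more fundamentally, $z\mapsto z_s$ is discontinuous on $H$, so the H\"older quotient is not even $\nu$-measurable as a function on $H$ until one passes to a version, and it has no Fr\'echet gradient. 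Replacing $z_s$ by a local average $\langle\rho_\eta(\cdot-s),z\rangle$ does produce a cylinder functional, but the gradient of the resulting quotient has $H$-norm of order $\|\rho_\eta\|_{L^2}/|s-t|^\alpha\sim \eta^{-1/2}|s-t|^{-\alpha}$, which blows up as $\eta\to 0$ (and as $|s-t|\to 0$). So the uniform bound $\|\nabla(\cdot)\|\leq 1$ that your $\mathcal{E}_1$-estimate rests on is false, and the inequality $\mathrm{Cap}_\mathcal{E}(K\setminus K_n)\lesssim (1+n^{-2})\,\nu(\|z\|_{C^\alpha}>n/2)$ does not follow. Compactness (H\"older balls) and $1$-Lipschitzness on $H$ pull in opposite directions here, and your construction cannot have both.

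The paper sidesteps this tension. It never attempts to cap the complement of a compact set directly. Instead it fixes a countable dense set $\{y_m\}\subset K$ and considers $v_m(z):=\varphi(\|z-y_m\|)$ with $\varphi$ a truncation of the identity with $\|\varphi'\|_\infty\leq 1$; these are $1$-Lipschitz on $H$, so after approximation from below by finite suprema of cylinder functions one gets $v_m\in D(\mathcal{E})$ with $\mathcal{E}(v_m,v_m)\leq 1$. Then $w_n:=\inf_{m\leq n}v_m$ satisfies $\mathcal{E}(w_n,w_n)\leq 1$ \emph{uniformly in $n$} (not tending to $0$) and $w_n\downarrow 0$ pointwise on $K$; from $w_n\to 0$ in $L^2(\nu)$ together with the uniform energy bound, a Banach--Saks/Ces\`aro argument and \cite[III.3.5]{ma2012introduction} upgrade this, via monotonicity, to $\mathcal{E}$-quasi-uniform convergence. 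The nest of totally bounded (hence compact) sets then consists of finite unions of $H$-balls around the $y_m$ rather than H\"older balls. The moral is that the compact sets appear only \emph{a posteriori} from the quasi-uniform convergence; nothing in $D(\mathcal{E})$ ever needs to ``see'' the sup-norm or H\"older topology, which is what your proposal would require.
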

The proof of Proposition \ref{closability} is postponed to Appendix \ref{Proofs}.

\smallskip
Let $(Q_t)_{t \geq 0}$ be the contraction semigroup on $L^{2}(K,\nu)$ associated with the Dirichlet form $(\mathcal{E}, D(\mathcal{E}))$, and let $(R_\lambda)_{\lambda>0} $ be the associated family of resolvents. Let also $\mathcal{B}_{b}(K)$ denote the set of Borel and bounded functions on $K$. As a consequence of Prop. \ref{closability}, in virtue of Thm IV.3.5 and Thm V.1.5 in \cite{ma2012introduction}, we obtain the following result.

\begin{cor}
There exists a diffusion process $M=\{\Omega, \mathcal{F}, (u_t)_{t \geq 0}, (\mathbb{P}_x)_{x \in K} \}$ properly associated to $(\mathcal{E},D(\mathcal{E}))$, i.e. for all $\varphi \in L^{2}(\nu) \cap \mathcal{B}_b(K)$, and for all $t > 0$,  $E_{x}(\varphi(u_{t})), \, x \in K,$ defines an $\mathcal{E}$ quasi-continuous version of $Q_{t} \varphi$. Moreover, the process $M$ admits the following continuity property
\[\mathbb{P}_{x}[t \mapsto u_{t} \, \, \text{is continuous on} \, \, \mathbb{R}_{+}] = 1,\quad \text{for} \, \, \mathcal{E}-{\rm q.e.} \, x  \in K. \]
\end{cor}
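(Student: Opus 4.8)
The plan is to obtain the statement as a formal consequence of the general representation theory for symmetric Dirichlet forms, with Proposition~\ref{closability} as the only substantial input. By that proposition, $(\mathcal{E}, D(\mathcal{E}))$ is a symmetric, local, quasi-regular Dirichlet form on $L^2(K,\nu)$, where $K$ is the Polish space of nonnegative elements of $H$; these are precisely the hypotheses needed below.

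First I would apply Theorem~IV.3.5 of \cite{ma2012introduction}: a quasi-regular symmetric Dirichlet form is properly associated with a $\nu$-symmetric special standard Markov process $M = \{\Omega, \mathcal{F}, (u_t)_{t\geq 0}, (\mathbb{P}_x)_{x\in K}\}$. By the definition of proper association, for every $\varphi \in L^2(\nu)\cap \mathcal{B}_b(K)$ and every $t>0$ the map $x \mapsto E_x(\varphi(u_t))$ is an $\mathcal{E}$-quasi-continuous $\nu$-version of $Q_t\varphi$; in particular the transition semigroup and resolvent of $M$ coincide, up to $\nu$-null sets, with $(Q_t)_{t\geq 0}$ and $(R_\lambda)_{\lambda>0}$. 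This yields the first assertion.

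Next I would use locality. By Theorem~V.1.5 of \cite{ma2012introduction}, a quasi-regular Dirichlet form is local if and only if its properly associated process is a diffusion, i.e.\ $\mathbb{P}_x$-a.s.\ the path $t \mapsto u_t$ is continuous on $[0,\zeta)$ (with $\zeta$ the lifetime) for $\mathcal{E}$-q.e.\ $x \in K$. Since locality holds by Proposition~\ref{closability}, $M$ is a diffusion. To pass from $[0,\zeta)$ to all of $\mathbb{R}_+$ I would check that $M$ is conservative: $\nu$ is a probability measure, and the constant function $\mathbf{1}$ lies in $\mathcal{F}\mathcal{C}^\infty_b(K) \subset D(\mathcal{E})$ with $\mathcal{E}(\mathbf{1},\mathbf{1}) = 0$, so $\mathbf{1}$ is in the kernel of the generator and $Q_t\mathbf{1} = \mathbf{1}$ for all $t\geq 0$; by proper association this forces $\mathbb{P}_x[\zeta = +\infty] = 1$ for $\mathcal{E}$-q.e.\ $x$. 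Combining these facts, $t\mapsto u_t$ is $\mathbb{P}_x$-a.s.\ continuous on $\mathbb{R}_+$ for $\mathcal{E}$-q.e.\ $x\in K$, which is the second assertion.

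The genuine difficulty is entirely upstream, in Proposition~\ref{closability} — closability, quasi-regularity and locality of $\mathcal{E}$ — whose proof is deferred to Appendix~\ref{Proofs}. Granting that, the proof of the present statement reduces to quoting the two cited theorems; the only points requiring a little care are matching the notion of ``properly associated'' used here to the conclusion of Theorem~IV.3.5, and the short conservativeness check needed to extend path continuity to all of $\mathbb{R}_+$.
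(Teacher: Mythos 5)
Your argument coincides with the paper's: the corollary is presented as an immediate consequence of Proposition~\ref{closability} together with Theorems IV.3.5 and V.1.5 of \cite{ma2012introduction}, which is exactly what you invoke. The only thing you add is the explicit conservativeness check (using $\mathbf{1}\in\mathcal{F}\mathcal{C}^\infty_b(K)$, $\mathcal{E}(\mathbf{1},\mathbf{1})=0$, hence $R_1\mathbf{1}=\mathbf{1}$ and $Q_t\mathbf{1}=\mathbf{1}$) to upgrade continuity on $[0,\zeta)$ to continuity on all of $\mathbb{R}_+$; the paper leaves this implicit, and your handling of it is correct.
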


The rest of this section will be devoted to show that for $\mathcal{E}$-q.e. $x \in K$, under $\mathbb{P}_{x}$, $(u_t)_{t\geq 0}$ solves \eqref{spde=1}, or rather its weaker form \eqref{formal1}.

In the sequel, we set $\Lambda_{1} := \Lambda + (\cdot,\cdot)_{L^{2}(\mu)}$ and $\mathcal{E}_{1} := \mathcal{E} + (\cdot,\cdot)_{L^{2}(\nu)}$, which are inner products for the Hilbert spaces $D(\Lambda)$ and $D(\mathcal{E})$ respectively. We shall also write in an abusive way, for \text{any} $\Phi \in C^{1}(H)$
\[ \mathcal{E}_{1}(\Phi, \Phi) :=  \int_{K} \Phi^{2} \d \nu + \frac{1}{2} \int_{K} \| \nabla \Phi \|_{H}^{2}. \]
Since the Dirichlet form $(\mathcal{E},D(\mathcal{E}))$ is quasi-regular, by the transfer method stated in VI.2 of \cite{ma2012introduction}, we can apply several results of \cite{fukushima2010dirichlet} in our setting. An important technical point is the density of the space $\mathcal{S}$ introduced in Section \ref{sect_ibpf_exp_func} above in the domain $D(\mathcal{E})$ of this Dirichlet form. To state this precisely, we consider $\mathscr{S}$ to be the vector space generated by functionals $F:H \to \mathbb{R}$ of the form
\[ F(\zeta) = \exp(- \langle \theta,\zeta^{2} \rangle), \quad \zeta \in H,  \] 
for some $\theta : [0,1] \to \mathbb{R}_{+}$ Borel and bounded. Note that $\mathscr{S}$ may be seen as a subspace of the space  $\mathcal{S}$ of Section \ref{sect_ibpf_exp_func} in the following sense: for any $F \in \mathscr{S}$, $F \rvert_{C([0,1])} \in \mathcal{S}$. We also set:
\[ \mathscr{S}_{K} := \{ F \big \rvert_{K}, \ F \in \mathscr{S} \}.  \]

\begin{lm}\label{density}
$\mathscr{S}_{K}$ is dense in $D(\mathcal{E})$.
\end{lm}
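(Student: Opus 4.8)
The plan is to show that $\mathscr{S}_K$ is dense in $D(\mathcal{E})$ by combining two ingredients: first, a density statement in $L^2(\nu)$ (i.e. in the $L^2$-norm), and second, an approximation argument that upgrades this to density in the stronger graph norm $\mathcal{E}_1$. A convenient strategy is to work on the Gaussian side, via the isometry $\varphi \mapsto \varphi \circ j$ from $L^2(\nu)$ into $L^2(\mu)$ and the identity $\mathcal{E}(f,g) = \Lambda(f \circ j, g \circ j)$ of Proposition \ref{closability}. Under this isometry, $\mathscr{S}_K$ corresponds to the span of functionals $\zeta \mapsto \exp(-\langle \theta, \zeta^2 \rangle)$ with $\theta$ Borel bounded nonnegative, viewed as functions on $H$ (note $\zeta^2 = |\zeta|^2$, so these are even functionals, hence well-defined as functions of $j(\zeta)$). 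So it suffices to show that this span is dense, for the $\Lambda_1$-norm, in the closure of the even functionals in $\mathscr{F}\mathscr{C}^\infty_b(H)$ inside $D(\Lambda)$.

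The key steps I would carry out are as follows. \textbf{Step 1: $L^2(\mu)$-density.} First I would show that the linear span of $\{\exp(-\langle \theta, \cdot^2 \rangle) : \theta \geq 0 \text{ Borel bounded}\}$ is dense in $L^2(\mu)$ among even functionals — equivalently, that $\mathscr{S}_K$ is dense in $L^2(\nu)$. This can be done with a monotone-class / Stone--Weierstrass type argument: the functionals $\exp(-\sum_i c_i \langle \mathbf{1}_{A_i}, \zeta^2\rangle)$, for $c_i \geq 0$ and $A_i$ intervals, separate points of $K$ (since $\zeta \mapsto (\langle \mathbf{1}_{[0,t]}, \zeta^2\rangle)_{t \in [0,1]}$ determines $\zeta \in K$), form an algebra up to the fact that they are not closed under all linear combinations — but their linear span is an algebra-like family and one can invoke the fact that bounded continuous functions of finitely many such linear functionals are approximated, then use that $\exp$ applied to these linear functionals generates, via linear combinations and a Laplace-transform inversion argument, all of $C_b$ of those coordinates. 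Concretely, since $\lambda \mapsto e^{-\lambda s}$ for $s \geq 0$ has dense span in $C_0([0,\infty))$, one recovers all functions of $\langle \theta, \zeta^2\rangle$, and then a standard argument over finitely many $\theta$'s gives density of $\mathscr{S}_K$ in $L^2(\nu)$.

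\textbf{Step 2: from $L^2$ to $D(\mathcal{E})$.} To upgrade $L^2$-density to $D(\mathcal{E})$-density, I would use the semigroup $(Q_t)_{t\geq 0}$ (equivalently the resolvent $(R_\lambda)$). For any $f \in D(\mathcal{E})$ we have $R_\lambda g \to f$ in $\mathcal{E}_1$-norm as $\lambda \to \infty$ when $g = f - $ (something), or more simply: $\mathscr{F}\mathscr{C}^\infty_b(K)$ is $\mathcal{E}_1$-dense in $D(\mathcal{E})$ by definition of the closure, so it suffices to approximate each $F \in \mathscr{F}\mathscr{C}^\infty_b(K)$ in $\mathcal{E}_1$-norm by elements of $\mathscr{S}_K$. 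Here I would use that the Ornstein--Uhlenbeck semigroup $\mathbf{Q}_t$ is strongly continuous on $D(\Lambda)$ and smoothing: for $\Phi \in \mathscr{F}\mathscr{C}^\infty_b(H)$ even, $\mathbf{Q}_t \Phi \to \Phi$ in $\Lambda_1$-norm, and $\mathbf{Q}_t \Phi$, being given by an explicit Gaussian integral in the finitely many coordinates $\langle l_i, \cdot\rangle$, can be represented via Fourier/Laplace inversion as a (limit of) linear combinations of Gaussian exponentials $\exp(-\langle \theta, \cdot^2\rangle)$ — one writes $\psi$ as a superposition of such exponentials and checks the resulting integrals converge in $\Lambda_1$. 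Alternatively, and perhaps more cleanly, one applies $R_\lambda$ to the $L^2$-dense family $\mathscr{S}_K$: since $R_\lambda(L^2(\nu))$ is $\mathcal{E}_1$-dense in $D(\mathcal{E})$, it is enough to show $R_\lambda \mathscr{S}_K \subset \overline{\mathscr{S}_K}^{\mathcal{E}_1}$; and on the Gaussian side $\mathbf{R}_\lambda$ maps a Gaussian exponential to an explicit integral $\int_0^\infty e^{-\lambda t}(\text{Gaussian exponential in } v_t) \, dt$, which one recognizes, after expanding the heat-kernel covariance, as lying in the closed span of $\mathscr{S}$.

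\textbf{The main obstacle} I anticipate is Step 2, specifically showing that the semigroup or resolvent applied to a Gaussian exponential stays (in the closure, for the graph norm) within the span of Gaussian exponentials, and controlling the gradient terms rather than just the $L^2$ terms. The $L^2$-density in Step 1 is essentially soft (Stone--Weierstrass plus Laplace inversion), but closing up in $\mathcal{E}_1$ requires either an explicit computation of $\mathbf{Q}_t \exp(-\langle\theta,\cdot^2\rangle)$ — which is a Gaussian integral and is indeed again of the form $c(t)\exp(-\langle\theta_t, \cdot^2\rangle - \text{linear})$ up to the correction from the non-centered part, forcing one to slightly enlarge the class or argue the linear corrections are harmless — or a careful argument that $\mathscr{F}\mathscr{C}^\infty_b(K)$ functionals are approximated in gradient norm, using that $\nabla$ of a Gaussian exponential is itself a Gaussian exponential times a linear functional, hence these gradients span a dense subspace of the relevant $L^2(\nu; H)$ as well. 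I would handle this by first proving the clean statement that the $\Lambda_1$-closure of $\mathscr{S}$ (even part) contains all even $\mathscr{F}\mathscr{C}^\infty_b(H)$, reducing everything to a finite-dimensional Gaussian density statement where both the function and its gradient must be approximated, which is standard once one notes Hermite-type completeness of Gaussian exponentials in weighted Sobolev spaces on $\mathbb{R}^m$.
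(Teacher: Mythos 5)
Your plan has the right broad shape — it is exactly the two stages (an $L^2(\nu)$-density statement, then an upgrade to the graph norm $\mathcal{E}_1$ by approximating elements of $\mathcal{F}\mathcal{C}^\infty_b(K)$, which by construction are $\mathcal{E}_1$-dense) — and the paper's proof indeed proceeds by taking a fixed $\Phi\in\mathcal{F}\mathcal{C}^\infty_b(H)$ and approximating it in $\mathcal{E}_1$-norm by elements of $\mathscr{S}$, controlling gradients via uniform bounds and dominated convergence. But there is a genuine gap in the centrepiece of your Step 2: you do not address the crucial conversion from \emph{linear} cylinder coordinates to \emph{quadratic} ones. Elements of $\mathcal{F}\mathcal{C}^\infty_b(K)$ are of the form $\psi(\langle l_1,\zeta\rangle,\dots,\langle l_m,\zeta\rangle)$, while elements of $\mathscr{S}_K$ only see $\langle\theta,\zeta^2\rangle$. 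On $K$ one has $\zeta=\sqrt{\zeta^2}$, but $\sqrt{\cdot}$ is not smooth at $0$ and, more importantly, $\langle l,\sqrt{\zeta^2}\rangle$ is \emph{not} a cylinder function of $\zeta^2$. The paper's proof handles this in two dedicated preliminary steps that your plan omits: first regularising via $\Phi_\epsilon(\zeta):=\Phi(\sqrt{\zeta^2+\epsilon})$ (making the functional a smooth function of $\zeta^2$), then projecting $\zeta^2$ onto the span of nonnegative step functions $\zeta^d_i=\sqrt d\,\mathbf{1}_{[(i-1)/d,\,i/d)}$ to reduce to genuine cylinder functions $f(\langle\zeta_1,\zeta^2\rangle,\dots,\langle\zeta_d,\zeta^2\rangle)$ with $\zeta_i\in K$ and $f\in C^1_b(\mathbb R_+^d)$. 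Only after these two reductions does the finite-dimensional approximation begin, and there the paper uses the substitution $y_i=e^{-x_i}$ together with the $C^1$-version of Weierstrass on the cube $[0,1]^d$ — not Hermite completeness, which would live on the wrong measure (the image of $\nu$ under $\zeta\mapsto(\langle\zeta_i,\zeta^2\rangle)_i$ is not Gaussian) and would give only $L^2$, not the uniform $C^1$ bounds needed for dominated convergence of $\mathcal{E}_1$.

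Your alternative route through the resolvent also fails at a concrete point. Computing $\mathbf{Q}_t\exp(-\langle\theta,\cdot^2\rangle)$ is a Gaussian computation, but what comes out is of the form $c(t)\exp(-\langle\hat\Theta_t z,z\rangle)$ where $\hat\Theta_t$ is a full (non-local) symmetric operator on $H$, not a multiplication operator: the heat semigroup mixes spatial points, so the resulting quadratic form in $z$ is not of the form $\int\theta(x)z(x)^2\,dx$. Hence $\mathbf{R}_\lambda\mathscr{S}$ is not obviously in the $\Lambda_1$-closure of $\mathscr{S}$, and the inclusion $R_\lambda\mathscr{S}_K\subset\overline{\mathscr{S}_K}^{\mathcal{E}_1}$ that your argument rests on is as hard as the original claim. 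A related slip is the identification of the target space as ``even cylinder functionals'': the image of $\mathcal{F}\mathcal{C}^\infty_b(K)$ under $f\mapsto f\circ j$ consists of functionals invariant under \emph{pointwise} sign changes $z\mapsto\sigma z$, $\sigma:[0,1]\to\{\pm 1\}$, which is a much smaller (and non-cylindrical, in $z$) class than $\mathbb{Z}_2$-even cylinder functionals. The missing ideas — the $\sqrt{\zeta^2+\epsilon}$ regularisation, the spatial discretisation of $\zeta^2$, and the logarithmic change of variable plus $C^1$-Weierstrass — are what make the paper's proof work, and without them your plan does not close.
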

The proof of Lemma \ref{density} is postponed to Appendix \ref{Proofs}.

\subsection{Convergence of one-potentials}
The key tool in showing that the Markov process constructed above defines a solution of \eqref{formal1} is the IBPF \eqref{exp_fst_part_ibpf_a_b_1}. The rule of thumb is that the last term in the IbPF yields the expression of the drift in the SPDE. Recall however that, for any fixed $h \in C^{2}_{c}(0,1)$, the last term in \eqref{exp_fst_part_ibpf_a_b_1} is given by
\[\frac14\int_{0}^{1} \d r \, h_r\, \frac{{\rm d}^{2}}{{\rm d} a^{2}} \, \Sigma^1_r (\Phi(X) \, | \, a)\,  \biggr\rvert_{a=0}, \quad \Phi \in \mathcal{S}, \]
which defines a generalized functional in the sense of Schwartz, rather than a genuine measure, on $C([0,1])$. It is therefore not immediate to translate the IbPF in terms of the corresponding dynamics. The strategy we follow to handle this difficulty relies on Dirichlet form techniques: we approximate the above generalized functional by a sequence of measures admitting a smooth density w.r.t. the law of the reflecting Brownian bridge, and show that the corresponding one-potentials converge in the domain $D(\mathcal{E})$ of the Dirichlet form (see Section 5 of \cite{fukushima2010dirichlet} for the definition of one-potentials). This will imply that the associated additive functionals converge to the functional describing the drift in the SPDE.

More precisely, let $\rho$ be a smooth function supported on $[-1,1]$ such that
\[\rho \geq 0, \quad \int_{-1}^{1} \rho = 1, \quad \rho(y) = \rho(-y), \quad y \in \mathbb{R}.\] 
For all $\epsilon>0$, let
\begin{equation}
\label{def_mollifier}
\rho_{\epsilon}(y) := \frac{1}{\epsilon} \, \rho \left( \frac{y}{\epsilon} \right), \quad y \in \mathbb{R}. 
\end{equation}
Then, for all $\Phi \in \mathcal{S}$ and $h \in C^{2}_{c}(0,1)$, the right-hand side of the IbPF \eqref{exp_fst_part_ibpf_a_b_1} can be rewritten as follows
\begin{equation}
\label{relation_cond_lt}
\frac{1}{4} \int_{0}^{1} h_{r} \, \frac{ \d^{2}}{\d a^{2}} \, \Sigma^1_r (\Phi(X) \, | \, a)\, \biggr\rvert_{a=0} \d r = \frac{1}{2} \, \lim_{\epsilon \to 0} \mathbb{E} \left[ \Phi(|\beta|)  \int_{0}^{1} h_{r} \, \rho_{\epsilon}''(\beta_{r}) \, \d r \right]. 
\end{equation}
Indeed, starting from the right-hand side, by conditioning on the value of $|\beta_{r}|$, and recalling that $|\beta| \overset{(d)}{=} \nu$, the equality follows at once.

We will now show that the convergence of measures \eqref{relation_cond_lt} can be enhanced to a convergence in the space $D(\Lambda)$ of the associated one-potentials. We henceforth fix a function $h \in C^{2}_{c}(0,1)$. Then there exists $\delta \in (0,1)$ such that $h$ is supported in $[\delta, 1-\delta]$. For all $\epsilon > 0 $, let $G_{\epsilon}:H\to{\mathbb R}$ be defined by
\begin{equation}\label{Geps} 
G_{\epsilon}(z) := \frac{1}{2} \int_{0}^{1} h_{r} \, \rho_{\epsilon}''(z_{r}) \d r, \quad z \in H. 
\end{equation}
For all $t > 0$ and $z \in H$, we have
\[ \mathbf{Q}_{t} G_{\epsilon}(z) = \int_{0}^{1} \frac{h_{r}}{2\sqrt{2 \pi q_{t}(r)}} \int_{\mathbb{R}} \rho_{\epsilon}''(a) \exp \left( - \frac{(a-z(t,r))^{2}}{2 q_{t}(r)} \right) \d a \d r,
\] 
which, after two successive integration by parts, can be also written
\[ \int_{0}^{1} \frac{h_{r}}{2\sqrt{2 \pi q_{t}(r)}} \int_{\mathbb{R}} \rho_{\epsilon}(b) \left[ \left( \frac{b-z(t,r)}{q_t(r)} \right)^2 - \frac{1}{q_t(r)} \right]  \exp \left( - \frac{(b-z(t,r))^{2}}{2 q_{t}(r)} \right) \d b \d r, \] 
where $z(t,\cdot)$ depends on $z$ via \eqref{expr_solution_he}.
For all $\epsilon >0$, we define the functional $U_{\epsilon}: H \to \mathbb{R}$ by
\[ U_{\epsilon}(z) = \int_{0}^{\infty} e^{-t} \, \mathbf{Q}_{t}G_{\epsilon}(z) \d t, \quad z \in H.\]
Note that $U_{\epsilon}$ is the one-potential of the additive functional 
\[ \int_{0}^{t} G_{\epsilon}(v(s,\cdot)) \d s, \qquad t \geq 0, \]
associated with the Markov process $(v(t,\cdot))_{t\geq 0}$ in $H$ defined in \eqref{solution_she} (see Section 5 of \cite{fukushima2010dirichlet} for this terminology). In particular, $U_{\epsilon} \in D(\Lambda)$. For all $t>0$, let $G^{(t)}: H \to \mathbb{R}$ be the functional defined by
\[ G^{(t)}(z) := \int_{0}^{1} \frac{h_{r}}{2 \sqrt{2 \pi q_{t}(r)}} \left[ \left( \frac{z(t,r)}{q_t(r)} \right)^2 - \frac{1}{q_t(r)} \right] \exp \left( - \frac{z(t,r)^{2}}{2 q_{t}(r)} \right) \d r, \qquad z \in H. \] 
We claim that the following holds:

\begin{prop}
\label{conv_one_pot}
The functional $U :  H \to \mathbb{R}$ defined by 
\begin{equation}
\label{limiting_one_pt} 
U(z) := \int_0^\infty e^{-t} \, G^{(t)}(z) \d t, \qquad z \in H,
\end{equation}
belongs to $D(\Lambda)$. Moreover, $U_{\epsilon} \underset{\epsilon \to 0}{\longrightarrow} U$ in $D(\Lambda)$. 
\end{prop}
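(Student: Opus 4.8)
The plan is to prove that $(U_\epsilon)_{\epsilon>0}$ is a Cauchy net in the Hilbert space $(D(\Lambda),\Lambda_1)$, and then to identify its limit with $U$ through a separate, softer $L^2(\mu)$-convergence. The algebraic backbone is the following. Since $\mu$ is reversible for $(\mathbf Q_t)_{t\geq0}$, each $\mathbf Q_t$ is self-adjoint on $L^2(\mu)$; combining this with $\mathbf R_1=\int_0^\infty e^{-t}\mathbf Q_t\d t$ and with the characterisation $\Lambda_1(\mathbf R_1G,F)=(G,F)_{L^2(\mu)}$ of the one-potential $\mathbf R_1 G$ of $G\in L^2(\mu)$ (valid for all $F\in D(\Lambda)$; and $G_\epsilon\in L^2(\mu)$ for each fixed $\epsilon>0$, its norm being finite albeit of order $\epsilon^{-5/2}$), one obtains, for all $\epsilon,\epsilon'>0$,
\[
\|U_\epsilon-U_{\epsilon'}\|_{D(\Lambda)}^2=\big(G_\epsilon-G_{\epsilon'},\,\mathbf R_1(G_\epsilon-G_{\epsilon'})\big)_{L^2(\mu)}=\int_0^\infty e^{-t}\,\big\|\mathbf Q_{t/2}G_\epsilon-\mathbf Q_{t/2}G_{\epsilon'}\big\|_{L^2(\mu)}^2\d t.
\]
Hence the problem reduces to controlling $\mathbf Q_sG_\epsilon$ in $L^2(\mu)$, for $s>0$, as $\epsilon\to0$ and uniformly in $\epsilon$ for small $s$.

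For $s>0$ the two integrations by parts performed above express $\mathbf Q_sG_\epsilon(z)=\int_{\mathbb R}\rho_\epsilon(b)\,G^{(s),b}(z)\,\d b$, where $G^{(s),b}$ is the functional obtained from $G^{(s)}$ by replacing $z(s,r)$ with $z(s,r)-b$; in particular $G^{(s),0}=G^{(s)}$. Since $b\mapsto G^{(s),b}$ is continuous from $\mathbb R$ into $L^2(\mu)$ for each fixed $s>0$ (pointwise continuity in $z$ plus Gaussian domination), one gets, exactly as in the argument following \eqref{relation_cond_lt}, that $\mathbf Q_sG_\epsilon\to G^{(s)}$ in $L^2(\mu)$ as $\epsilon\to0$. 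Moreover, by Minkowski's integral inequality, $\|\mathbf Q_sG_\epsilon\|_{L^2(\mu)}\leq\sup_{|b|\leq1}\|G^{(s),b}\|_{L^2(\mu)}$, so it suffices to establish the key bound
\[
\Theta(s):=\sup_{|b|\leq1}\|G^{(s),b}\|_{L^2(\mu)}^2=O(s^{-3/4})\ \ \text{as }s\downarrow0,\qquad \Theta\ \text{bounded for }s\geq1.
\]
Granting this, the integrand in the displayed identity tends to $0$ for each $t>0$ and is dominated by $4e^{-t}\Theta(t/2)$, which is integrable because $\tfrac34<1$; dominated convergence then gives that $(U_\epsilon)$ is Cauchy in $D(\Lambda)$, with limit $\widetilde U\in D(\Lambda)$. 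On the other hand $\|U_\epsilon-U\|_{L^2(\mu)}\leq\int_0^\infty e^{-t}\|\mathbf Q_tG_\epsilon-G^{(t)}\|_{L^2(\mu)}\d t\to0$ by dominated convergence (domination by $2e^{-t}\sqrt{\Theta(t)}$, with $\int_0^\infty e^{-t}\sqrt{\Theta(t)}\d t<\infty$ since $\sqrt{\Theta(t)}=O(t^{-3/8})$), and the same estimate shows $U\in L^2(\mu)$. Since $D(\Lambda)\hookrightarrow L^2(\mu)$ continuously, $\widetilde U=U$, which proves both assertions.

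The substantial point, and what I expect to be the main obstacle, is the bound $\Theta(s)=O(s^{-3/4})$. Writing $G^{(s),b}(z)=\int_0^1 F^{b}_r(z)\,\d r$, one must control $\int_0^1\!\int_0^1\E_\mu[F^{b}_rF^{b}_{r'}]\,\d r\,\d r'$ uniformly in $|b|\leq1$, and two features have to be exploited together. First, since $h$ is supported in $[\delta,1-\delta]$, the short-time asymptotics $g_{2\tau}(r,r)\sim(4\pi\tau)^{-1/2}$ give $q_s(r)\sim c\sqrt s$ and $\operatorname{Var}_\mu(z(s,r))\to r(1-r)>0$, uniformly in $r\in\operatorname{supp}h$; this is why the singular prefactors $q_s(r)^{-3/2}$ contribute only $s^{-3/4}$. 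Second, and crucially, the kernel $u\mapsto(u^2-1)e^{-u^2/2}$ has vanishing integral over $\mathbb R$ (its Fourier transform is $-\sqrt{2\pi}\,\xi^2e^{-\xi^2/2}$, which vanishes to second order at $\xi=0$); this cancellation, combined with the fact that under $\mu$ the heat-smoothed field $r\mapsto z(s,r)$ has off-diagonal Gaussian determinant $\operatorname{Var}(z(s,r))\operatorname{Var}(z(s,r'))-\operatorname{Cov}(z(s,r),z(s,r'))^2$ of order $\max(|r-r'|,\sqrt s)$, improves the crude Cauchy--Schwarz estimate $O(s^{-5/4})$ to the needed $O(s^{-3/4})$. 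Concretely, a Fourier/Mehler computation on the explicit kernels $g_t$ and $q_\infty(x,x')=x\wedge x'-xx'$ should yield $\E_\mu[F^{b}_rF^{b}_{r'}]\lesssim|h_r||h_{r'}|\,\max(|r-r'|,\sqrt s)^{-5/2}$ uniformly in $|b|\leq1$, whose double integral is $O(s^{-3/4})$ after splitting according to whether $|r-r'|\lessgtr\sqrt s$. Carrying out this Gaussian estimate carefully is the technical heart of the proof; everything else is soft.
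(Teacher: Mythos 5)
Your reduction via Dirichlet-form calculus is a genuinely different and cleaner route than the paper's. The paper directly computes $\nabla \mathbf{Q}_t G_\epsilon$ and $\nabla G^{(t)}$, expresses the resulting $L^2(\mu;H)$ norms through $\partial^6 \Gamma_{r,s}/\partial x^3\partial y^3$ (where $\Gamma_{r,s}$ is a bivariate Gaussian density with covariance matrix $M$), and bounds this sixth derivative using Lemma \ref{bound_derivative_density} together with the determinant lower bounds \eqref{lower_bound_det_trivial} and \eqref{lower_bound_det}. Your semigroup identity $\Lambda_1(\mathbf R_1G,\mathbf R_1G)=\int_0^\infty e^{-t}\|\mathbf Q_{t/2}G\|^2_{L^2(\mu)}\d t$ bypasses the gradient computation entirely: one only needs a bound on $\|\mathbf Q_s G_\epsilon\|^2_{L^2(\mu)}$, which after two integrations by parts reduces to $\int\!\int h_rh_{r'}\rho_\epsilon(a)\rho_\epsilon(a')\,\partial^4\Gamma_{r,r'}/\partial a^2\partial a'^2\d a\d a'\d r\d r'$ --- a fourth derivative of the same Gaussian density, without the extra factor $\langle g_t(r,\cdot),g_t(s,\cdot)\rangle$. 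That is a modest but real simplification, and the soft arguments for identifying the limit with $U$ via $L^2(\mu)$ convergence are fine.

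However, there is a genuine gap: the quantitative bound $\Theta(s)=\sup_{|b|\leq1}\|G^{(s),b}\|^2_{L^2(\mu)}=O(s^{-3/4})$ (or just $O(s^{-\alpha})$ for some $\alpha<1$, which is all one needs) is stated and motivated heuristically but not proved --- you flag it yourself as "the technical heart of the proof." The crude $L^\infty$ bound used in the paper's first step only gives $\Theta(s)=O(s^{-3/2})$, which is not integrable against $e^{-t}\d t$; one must exploit both the cancellation $\int F=0$ and the decorrelation of $z(s,r)$, $z(s,r')$ for $|r-r'|\gtrsim\sqrt s$. Making this rigorous requires exactly the machinery the paper develops for its sixth-derivative estimate: the Gaussian bound $|\partial^4\Gamma_{r,r'}/\partial a^2\partial a'^2|\leq A\det(M)^{-5/2}$ from (an immediate analogue of) Lemma \ref{bound_derivative_density}, combined by interpolation with the two determinant bounds $\det M\geq c_\delta(s\wedge1)^{1/2}$ (which itself depends on \eqref{lower_bound_qt}) and $\det M\geq\delta^2|r-r'|$, yielding $\Theta(s)=O(s^{-\gamma/2})$ for any $\gamma\in(3/2,2)$ --- sufficient, though not quite the sharp $s^{-3/4}$ you conjecture. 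Without supplying this argument, the proof is incomplete at precisely the point where all the analytic difficulty lives.
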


\begin{proof}
First note that $U_\epsilon \underset{\epsilon \to 0}{\longrightarrow} U$ in $L^2(\mu)$. Indeed, for all fixed $t>0$ and $z \in H$, we have
\[
\begin{split}
& |\mathbf{Q}_{t} G_\epsilon (z) - G^{(t)}(z)| \leq  
\\ & \int_{0}^{1} \frac{|h_{r}|}{2 \sqrt{2 \pi q_{t}(r)^{3}}} \int_{\mathbb{R}} \rho(x) \left| F\left( \frac{\epsilon x - z(t,r)}{\sqrt{q_t(r)}} \right) - F \left( \frac{z(t,r)}{\sqrt{q_t(r)}} \right) \right| \d x \d r, \end{split}
 \]
where the function $F: \mathbb{R} \to \mathbb{R}$ is defined by
\[F(y) = (y^2-1) \exp(-y^2/2), \qquad y \in \mathbb{R}. \]
Since $F$ is continuous and bounded, by dominated convergence, we deduce that, for all $r \in (0,1)$ and $x \in \mathbb{R}$
\[ \left \| F\left( \frac{\epsilon x - z(t,r)}{q_t(r)} \right) - F \left( \frac{z(t,r)}{q_t(r)} \right) \right \|_{L^2(\mu)} \underset{\epsilon \to 0}{\longrightarrow} 0.  \] 
Therefore, again by dominated convergence, we have
\[\begin{split}
&\|\mathbf{Q}_{t} G_\epsilon - G^{(t)}\|_{L^2(\mu)} \\
&\leq \int_{0}^{1} \frac{|h_{r}|}{2 \sqrt{2 \pi q_{t}(r)^{3/2}}} \int_{\mathbb{R}} \rho(x) \left\| F\left( \frac{\epsilon x - z(t,r)}{q_t(r)} \right) - F \left( \frac{z(t,r)}{q_t(r)} \right) \right\|_{L^2(\mu)} \d x \d r\\
& \underset{\epsilon \to 0}{\longrightarrow} 0.  
\end{split}\]
Recall that we have fixed $\delta \in (0,1)$ such that $h$ is supported in $[\delta,1-\delta]$. As showed in the proof of Proposition 1 in \cite{zambotti2004occupation}, there exists $C_\delta>0$ such that, for all $r \in (\delta, 1-\delta)$ and $t>0$ 
\begin{equation}
\label{lower_bound_qt}
q_t(r) \geq C_\delta (\sqrt{t} \wedge 1).
\end{equation}
In the following, we will denote by $C_\delta$ any constant depending only on $\delta$, and whose value may change from line to line. Thanks to \eqref{lower_bound_qt}, we obtain the bound
\[\|\mathbf{Q}_{t} G_\epsilon - G^{(t)}\|_{L^2(\mu)} \leq C_\delta \|F \|_{\infty} \frac{\|h\|_\infty}{t^{3/4} \wedge 1}, \]
where the right-hand side is integrable w.r.t. the measure $e^{-t} \d t$ on $\mathbb{R}_+$.
Hence, by dominated convergence,
\[\begin{split}
\| U_{\epsilon} - U \|_{L^2(\mu)} & \leq \int_0^\infty e^{-t} \, \|\mathbf{Q}_{t} G_\epsilon - G^{(t)}\|_{L^2(\mu)} \d t \\
&\underset{\epsilon \to 0}{\longrightarrow} 0,
\end{split} \]
whence the claim. Now, we show that $U \in D(\Lambda)$. Note that, for all $t>0$ and $\epsilon >0$, we have
\[ \nabla \mathbf{Q}_{t} G_\epsilon (z) = \frac{1}{2} \int_{0}^{1} h_{r} \, g_t(r,\cdot) \, \mathbb{E}[ \rho_{\epsilon}^{(3)}(v(t,r))] \d r, \qquad z \in H, \]
where $v$ is given by \eqref{expr_solution_she} and where we are taking expectation with respect to the white noise $\xi$. Therefore, denoting by $\| \cdot \|_{L^2}$ the norm in $L^2(H,\mu; H)$, we have
\[\begin{split}
&\| \nabla \mathbf{Q}_{t} G_\epsilon \|^2_{L^2} = \\
& \frac{1}{4} \int_{[0,1]^2} h_{r} h_s \langle g_t(r,\cdot), g_t(s, \cdot) \rangle \, \int_H   \mathbb{E}[ \rho_{\epsilon}^{(3)}(v(t,r))] \, \mathbb{E}[ \rho_{\epsilon}^{(3)}(v(t,s))] \, \d \mu(z) \, \d r \d s 
\end{split} \]
where the integral in $\d \mu(z)$ is taken with respect to $v(0,\cdot) = z$. Hence
\[ \begin{split}
&\| \nabla \mathbf{Q}_{t} G_\epsilon \|^2_{L^2} = \\
& \int_{[0,1]} \frac{h_r h_s \langle g_t(r,\cdot), g_t(s, \cdot) \rangle}{4} \, \int_{\mathbb{R}^2} \rho^{(3)}_{\epsilon}(x) \,  \rho^{(3)}_{\epsilon}(y) \, \Gamma_{r,s} (x,y) \, \d x \d y \, \d r \d s  = \\
& \int_{[0,1]} \frac{h_r h_s \langle g_t(r,\cdot), g_t(s, \cdot) \rangle}{4} \, \int_{\mathbb{R}^2} \rho_{\epsilon}(x) \, \rho_{\epsilon}(y) \, \frac{\partial^6 \Gamma_{r,s}}{\partial x^3 \partial y^3} (x,y) \, \d x \d y \, \d r \d s, 
\end{split}\] 
where, for all $(r,s) \in [0,1]$ and $(x,y) \in \mathbb{R}^{2}$
\[
\Gamma_{r,s}(x,y) := \mathbb{E} \left[ \frac{1}{2 \pi \sqrt{q_{t}(r) q_{t}(s)}} \exp \left(- \frac{(x-z(t,r))^{2}}{2q_{t}(r)} - \frac{(y-z(t,s))^{2}}{2q_{t}(s)} \right) \right], 
\]
where $z(t,\cdot)$ is given by \eqref{expr_solution_he}, and where we are taking expectation with respect to $z \overset{(d)}{=} \mu$. Reasoning as in Section 6 of \cite{zambotti2005integration}, we see that $\Gamma_{r,s}$ is the density of the centered Gaussian law on $\mathbb{R}^{2}$ with covariance matrix
\[ M = \begin{pmatrix}
q_{\infty}(r) & q^{t}(r,s) \\
q^{t}(r,s) & q_{\infty}(s)
\end{pmatrix}. \]
Similarly, we have 
\[\begin{split}
\| \nabla G^{(t)}\|^2_{L^2} = 
\int_{0}^{1} \int_{0}^{1} \frac{h_{r} \, h_s \, \langle g_t(r,\cdot), g_t(s, \cdot) \rangle}{4} \,  \frac{\partial^6 \Gamma_{r,s}}{\partial x^3 \, \partial y^3} (0,0) \, \d r \d s. 
\end{split}\] 
So there remains to obtain a bound on 
\[  \underset{\mathbb{R}^{2}}{\sup} \ \left| \frac{\partial^{6}\Gamma_{r,s}}{\partial x^{3} \, \partial y^{3}} \right|, \]
for all $(r,s) \in [0,1]^{2}$. To do so, 
we use the following lemma:

\begin{lm}
\label{bound_derivative_density}
Let $f: \mathbb{R}^{2} \to \mathbb{R}$ be the density of a centered Gaussian law on $\mathbb{R}^{2}$ with non-degenerate covariance matrix $M$ satisfying $|M_{i,j}| \leq 1$ for all $i,j \in \{1,2\}$. Then, for all $k,\ell \in \mathbb{N}$ and $(x,y) \in \mathbb{R}^{2}$
\[ \left| \frac{\partial^{k+\ell} f}{\partial^{k} x \, \partial^{\ell} y} \right| \leq A_{k,\ell} \  \det(M)^{-\frac{1+k+\ell}{2}}\]
where $A_{k, \ell} >0$ is a constant depending only on $k$ and $\ell$.
\end{lm}

\begin{proof}
Setting
\[ M = \begin{pmatrix}
a & b \\
b & c
\end{pmatrix},  \]
we can express the eigenvalues $\lambda$ and $\mu$ of $M$ as
\[ \lambda = \frac{a+c}{2} + \sqrt{\left(\frac{a-c}{2} \right)^2 + b^2} \]
and
\[ \mu = \frac{a+c}{2} - \sqrt{\left(\frac{a-c}{2} \right)^2 + b^2}. \]
Hence, since $a$, $b$ and $c$ are bounded by $1$, we deduce that $\lambda$ and $\mu$ are bounded by some universal constant $C>0$. 
Let now $P$ be an orthogonal matrix such that $M = P^{T} D P$, where $P^{T}$ denotes the transposed of the matrix $P$, and where
\[ D = \begin{pmatrix}
\lambda & 0 \\
0 & \mu
\end{pmatrix}.\]  
Then, for all $u \in \mathbb{R}^2$
\begin{equation}
\label{expre_density}
f(u) = \frac{1}{2 \pi \sqrt{\det(M)}} \, g(Pu) 
\end{equation}
where 
\[g(v) := \exp \left(-\frac{1}{2} v^T D^{-1} v \right) = \exp \left(-\frac{x^2}{2 \lambda} - \frac{y^2}{2 \mu} \right)\]
for all $v = (x,y) \in \mathbb{R}^2$. Since the function $u \mapsto e^{-\frac{u^2}{2}}$ is bounded on $\mathbb{R}$ with all its derivatives, we deduce that for all $k, \ell \in \mathbb{N}$, there exists $C_{k, \ell}>0$ depending only on $k$ and $\ell$ such that
\[\left| \frac{\partial^{k+\ell} g}{\partial x^{k} \partial y^{\ell}} \right| \leq C_{k,\ell} \, \lambda^{-k/2} \mu^{-\ell/2}. \]
Therefore, since $\lambda$ and $\mu$ are bounded by $C$, and noting that $\det(M) = \lambda \, \mu$, setting $C'_{k,\ell} := C_{k,\ell} \, C^{\frac{k+\ell}{2}}$ we have
\[\left| \frac{\partial^{k+\ell} g}{\partial x^{k} \partial y^{\ell}} \right| \leq C'_{k,\ell} \, \det(M)^{-\frac{k+\ell}{2}}. \]
Hence, by the relation \eqref{expre_density} and the chain rule, and since the coefficients of the orthogonal matrix $P$ are all bounded aby $1$, we obtain the claim. 
\end{proof}
We now apply the Lemma to the Gaussian density function $\Gamma_{r,s}$ for all $(r,s) \in (0,1)$. Note that $q_\infty(r) \leq 1$ and $q_{\infty}(s) \leq 1$, so all coefficients of its covariance matrix $M$ are indeed bounded by $1$ as requested. Therefore 
\[  \underset{\mathbb{R}^{2}}{\sup} \ \left| \frac{\partial^{6}\Gamma_{r,s}}{\partial x^{3} \partial y^{3}} \right| \leq A \, \det(M)^{-7/2}, \]
where $A \in (0, \infty)$ is a universal constant. Now
\[
\det(M) = q_{\infty}(r) q_{\infty}(s) - q^{t}(r,s)^{2},
\]
But
\[\begin{split}
q_{\infty}(r) q_{\infty}(s) - q^{t}(r,s)^{2} &\geq q_{\infty}(r) q_{\infty}(s) - q_{\infty}(r,s)^{2} \\
&= r(1-r)s(1-s) - (r \wedge s - rs)^{2} \\
&= s \wedge r (1- s \vee r) |s-r|,
\end{split}\] 
so we obtain the lower bound
\begin{equation}
\label{lower_bound_det_trivial}
\det(M) \geq \delta^{2} |r-s| 
\end{equation}
for all $r,s \in [\delta,1-\delta]$. On the other hand, reasoning as in Section 6 of \cite{zambotti2005integration}, we can show that there exists $c_{\delta}>0$ depending only on $\delta$ such that, for all $r,s \in [\delta,1-\delta]$ 
\[q_{\infty}(r) q_{\infty}(s) - q^{t}(r,s)^{2} \geq c_{\delta} \, (t \wedge 1)^{1/2},\]
which yields the lower bound
\begin{equation}
\label{lower_bound_det} 
\det(M) \geq c_{\delta} \, (t \wedge 1) ^{1/2}.
\end{equation} 
As a consequence, for all $r,s \in [\delta,1-\delta]$, interpolating \eqref{lower_bound_det_trivial} and \eqref{lower_bound_det}, we thus obtain 
\begin{equation}
\label{bound_derivative_gamma}
\left| \frac{\partial^{6} \Gamma_{r,s}}{\partial^{3} x \, \partial^{3}y} \right| \leq C_\delta \, (t \wedge 1)^{-\gamma/2} |r-s|^{-(7/2-\gamma)},
\end{equation} 
for any $\gamma \in (5/2,3)$, where $C_\delta>0$ is a constant depending only on $\delta$. Note also that, for some universal constant $C>0$, we have
\begin{equation} 
\label{bound_green}
\forall r,s \in [\delta, 1-\delta], \qquad \langle g_t(r,\cdot), g_t(s, \cdot) \rangle = g_{2t}(r,s) \leq C \, t^{-1/2}, 
\end{equation}
see e.g. Exercise 4.16 in \cite{zambotti2017random}. Therefore,
\[\| \nabla G^{(t)}\|^2_{L^2} \leq 
 C_\delta \, \|h\|_{\infty}^2(t \wedge 1)^{-(1+\gamma)/2} \, \int_{0}^{1} \int_{0}^{1} |r-s|^{-(7/2-\gamma)} \, \d r \d s, \] 
and the last integral is finite due to the choice of $\gamma$. Therefore, we deduce that $\| \nabla G^{(t)}\|_{L^2} \leq C(\delta,h,\gamma) \, t^{-(1+\gamma)/4}$, where the constant $C(\delta,h,\gamma)$ does not depend on $t$. Since $(1+\gamma)/4<1$, it follows that
\[ \int_0^\infty e^{-t} \, \| \nabla G^{(t)}\|_{L^2} \d t < \infty, \]
so that $\nabla U \in L^2(H,\mu; H)$. Therefore $U \in D(\Lambda)$ as claimed. There remains to prove that $U_\epsilon \underset{\epsilon \to 0}{\longrightarrow} U$ in $D(\Lambda)$. Note that, for all $t>0$ and $\epsilon > 0$,
\[\begin{split}
&\|\mathbf{Q}_{t} G_\epsilon - G^{(t)}\|^2_{L^2} \\
&= \int_{[0,1]^2} \d r \d s \, \frac{h_{r} h_s \langle g_t(r,\cdot), g_t(s, \cdot) \rangle}{4} \int_{\mathbb{R}^2} \d x \d y \, \rho(x) \, \rho(y) \, \Gamma^{(3;3)}_{r,s}(\epsilon x , \epsilon y ),
\end{split}\] 
where for all $(u,v) \in \mathbb{R}^2$,
\[\Gamma^{(3;3)}_{r,s}(u,v) := \frac{\partial^{6} \Gamma_{r,s}}{\partial^{3} x \, \partial^{3}y} (u,v)  - \frac{\partial^{6} \Gamma_{r,s}}{\partial^{3} x \, \partial^{3}y} (u,0) - \frac{\partial^{6} \Gamma_{r,s}}{\partial^{3} x \, \partial^{3}y} (0,v) + \frac{\partial^{6} \Gamma_{r,s}}{\partial^{3} x \, \partial^{3}y} (0,0).\]
By \eqref{bound_derivative_gamma} and \eqref{bound_green} 
we deduce that
\[ \|\mathbf{Q}_{t} G_\epsilon - G^{(t)}\|^2_{L^2} \leq  C_\delta \,  \|h\|_{\infty}^2 \, t^{-(1+\gamma)/2} \, \int_{0}^{1} \int_{0}^{1} |r-s|^{-(7/2-\gamma)} \, \d r \d s,\]
so that:
\[ \|\mathbf{Q}_{t} G_\epsilon - G^{(t)}\|_{L^2} \leq C(\delta, h, \gamma) \, t^{-(1+\gamma)/4},\]
where $C(\delta,h, \gamma)>0$ is independent of $\epsilon $ and $t$. Recall that the right-hand side above is integrable with respect to $ e^{-t} \d t$. Moreover, since $\Gamma_{r,s}$ is continuous, it follows that for all $t>0$, 
\[ \|\mathbf{Q}_{t} G_\epsilon - G^{(t)}\|_{L^2} \underset{\epsilon \to 0}{\longrightarrow} 0. \]
Hence, by dominated convergence, we deduce that 
\[ \| \nabla U_\epsilon - \nabla U \|_{L^2} \leq \int_0^t e^{-t} \, \| \nabla P_t G_\epsilon - \nabla G^{(t)}\|_{L^2} \, \d t  \underset{\epsilon \to 0}{\longrightarrow} 0. \]
Hence  $U_{\epsilon} \underset{\epsilon \to 0}{\longrightarrow} U$ in $D(\Lambda)$, and the Proposition is proved.
\end{proof}

\subsection{A projection principle}

Note that in the above section we worked in the domain $D(\Lambda)$ of the Dirichlet form associated with the Brownian bridge. For our dynamical problem, we shall however need to transfer the above results to the domain $D(\mathcal{E})$ of the Dirichlet form associated with the Bessel bridge. To do so, we invoke the following projection principle, which was first used in \cite{zambotti2004occupation} for the case of a $3$-Bessel bridge (see Lemma 2.2 therein). Recall the notations $\Lambda_{1} := \Lambda + (\cdot,\cdot)_{L^{2}(\mu)}$ and $\mathcal{E}_{1} := \mathcal{E} + (\cdot,\cdot)_{L^{2}(\nu)}$.

\begin{lm}
\label{projection}
There exists a unique bounded linear operator $\Pi:  D(\Lambda) \to D(\mathcal{E})$ such that, for all $F,G \in D(\Lambda)$ and $f \in D(\mathcal{E})$
\[ 
\Lambda_{1}(F,f \circ j) = \mathcal{E}_{1}(\Pi F,f), 
\]
where $j$ is as in \eqref{absolute_value_map}. Moreover, we have
\[\mathcal{E}_{1}(\Pi F, \Pi F) \leq \Lambda_{1}(F,F). \]
\end{lm}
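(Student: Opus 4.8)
The plan is to realise $\Pi$ as the inverse of an isometric embedding composed with an orthogonal projection. First I would record the consequences of Proposition \ref{closability}: the map $J : D(\mathcal{E}) \to D(\Lambda)$ defined by $Jf := f \circ j$ is well-defined, and it is a linear isometry from $(D(\mathcal{E}), \mathcal{E}_1)$ onto a subspace of $(D(\Lambda), \Lambda_1)$. Indeed, by \eqref{isometry} we have $\mathcal{E}(f,g) = \Lambda(f\circ j, g\circ j)$ for $f,g \in D(\mathcal{E})$, and since $\varphi \mapsto \varphi\circ j$ is an isometry from $L^{2}(\nu)$ to $L^{2}(\mu)$ we also have $(f,g)_{L^{2}(\nu)} = (f\circ j, g\circ j)_{L^{2}(\mu)}$; adding the two identities gives $\mathcal{E}_1(f,g) = \Lambda_1(f\circ j, g\circ j)$. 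Since $\mathcal{E}$ is a closed form, $D(\mathcal{E})$ is complete for $\mathcal{E}_1$, hence its image $J(D(\mathcal{E}))$ is a \emph{closed} subspace of the Hilbert space $(D(\Lambda), \Lambda_1)$.

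Next, for a fixed $F \in D(\Lambda)$, I would consider the linear functional $f \mapsto \Lambda_1(F, f\circ j)$ on $D(\mathcal{E})$. By Cauchy--Schwarz for $\Lambda_1$ together with the isometry property just established, $|\Lambda_1(F, f\circ j)| \leq \Lambda_1(F,F)^{1/2}\,\mathcal{E}_1(f,f)^{1/2}$, so this functional is bounded on the Hilbert space $(D(\mathcal{E}), \mathcal{E}_1)$. By the Riesz representation theorem there is a unique element, which I denote $\Pi F \in D(\mathcal{E})$, such that $\mathcal{E}_1(\Pi F, f) = \Lambda_1(F, f\circ j)$ for all $f \in D(\mathcal{E})$. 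Uniqueness of the Riesz representative gives at once both that $\Pi F$ is the unique element satisfying the defining identity and that $F \mapsto \Pi F$ is linear; equivalently, $\Pi = J^{-1}\circ P$, where $P$ is the $\Lambda_1$-orthogonal projection of $D(\Lambda)$ onto the closed subspace $J(D(\mathcal{E}))$, which also explains the name ``projection principle''.

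Finally, for the norm estimate I would take $f = \Pi F$ in the defining identity: using the isometry once more,
\[
\mathcal{E}_1(\Pi F, \Pi F) = \Lambda_1(F, (\Pi F)\circ j) \leq \Lambda_1(F,F)^{1/2}\,\Lambda_1\big((\Pi F)\circ j, (\Pi F)\circ j\big)^{1/2} = \Lambda_1(F,F)^{1/2}\,\mathcal{E}_1(\Pi F, \Pi F)^{1/2},
\]
whence $\mathcal{E}_1(\Pi F, \Pi F) \leq \Lambda_1(F, F)$; this in particular shows that $\Pi$ is a bounded operator from $(D(\Lambda),\Lambda_1)$ to $(D(\mathcal{E}),\mathcal{E}_1)$. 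There is no substantial obstacle beyond assembling the ingredients supplied by Proposition \ref{closability}; the one point that genuinely needs care is the closedness of $J(D(\mathcal{E}))$ inside $D(\Lambda)$, since it is precisely what licenses the Riesz/projection step, and it rests on the isometry $\mathcal{E}_1 = \Lambda_1\circ(J\times J)$ together with completeness of the form domains.
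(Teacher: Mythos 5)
Your argument is correct and follows essentially the same route as the paper's: in both cases the key observation is that $\mathcal{D} := \{\varphi\circ j : \varphi\in D(\mathcal{E})\}$ is, via \eqref{isometry} together with the $L^2$-isometry of $\varphi\mapsto\varphi\circ j$, a closed subspace of $(D(\Lambda),\Lambda_1)$ isometrically identified with $(D(\mathcal{E}),\mathcal{E}_1)$, and $\Pi$ is obtained as the $\Lambda_1$-orthogonal projection onto $\mathcal{D}$ followed by inverting that isometry (your Riesz-representation formulation being equivalent, as you note via $\Pi = J^{-1}\circ P$). You are, if anything, more explicit than the paper, which stops at "It then follows that $\Pi$ possesses the required properties," whereas you actually verify the defining identity, the uniqueness and linearity, and the bound $\mathcal{E}_1(\Pi F,\Pi F)\leq\Lambda_1(F,F)$.
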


\begin{proof}
We use the same arguments as in the proof of Lemma 2 in \cite{zambotti2004occupation}. Let $\mathcal{D} := \{ \varphi \circ j, \quad \varphi \in D(\mathcal{E}) \}$. By Proposition \ref{closability}, $\mathcal{D}$ is a linear subspace of $D(\Lambda)$ which is isometric to $D(\mathcal{E})$. 
In particular, it is a closed subspace of the Hilbert space $D(\Lambda)$. Hence, we may consider the orthogonal projection operator $\hat{\Pi}$ onto $\mathcal{D}$. Then, for all $F \in D(\Lambda)$, let $\Pi F $ be the unique element of $D(\mathcal{E})$ such that $\hat{\Pi} F = (\Pi F) \circ j$. It then follows that $\Pi$ possesses the required properties.
\end{proof}

We obtain the following refinement of the IbPF \eqref{exp_fst_part_ibpf_a_b_1} for $P^1$.
\begin{cor}
Let $U$ be as in \eqref{limiting_one_pt}. For all $f \in D(\mathcal{E})$ and $h \in C^{2}_{c}(0,1)$, we have
\begin{equation}
\label{IbPF_Dirichlet}
\mathcal{E}\left(\langle h, \cdot \rangle - \frac{1}{2} \Pi U \, ,\, f\right) = - \frac{1}{2} \int_{K} \left(\langle h'', \zeta \rangle - \Pi  U(\zeta)\right) f(\zeta) \d \nu(\zeta).
\end{equation}
\end{cor}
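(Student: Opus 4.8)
The plan is to use the fact that $U_\epsilon$ is the one-potential of $G_\epsilon$, i.e. $U_\epsilon = \mathbf{R}_1 G_\epsilon$, as the bridge between the analytic IbPF \eqref{exp_fst_part_ibpf_a_b_1} and the Dirichlet form $(\mathcal{E},D(\mathcal{E}))$. By the characterisation of one-potentials (see Section 5 of \cite{fukushima2010dirichlet}), this gives $\Lambda_1(U_\epsilon, F) = (G_\epsilon, F)_{L^2(\mu)}$ for every $F \in D(\Lambda)$. First I would apply this with $F = f\circ j$ for $f \in D(\mathcal{E})$, which is licit since $f\circ j \in D(\Lambda)$ by Proposition \ref{closability}; using the operator $\Pi$ of Lemma \ref{projection} on the left and the definition \eqref{absolute_value_map} of $j$ on the right yields
\[ \mathcal{E}_1(\Pi U_\epsilon, f) = \Lambda_1(U_\epsilon, f\circ j) = \int_H G_\epsilon(z)\, f(|z|)\, \mu(\d z) = \mathbb{E}\!\left[ G_\epsilon(\beta)\, f(|\beta|) \right]. \]

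Next I would let $\epsilon\to 0$ along $f = \Phi\rvert_K$ with $\Phi \in \mathscr{S}$. On the left-hand side, Proposition \ref{conv_one_pot} gives $U_\epsilon \to U$ in $D(\Lambda)$, so by boundedness of $\Pi$ one has $\Pi U_\epsilon \to \Pi U$ in $D(\mathcal{E})$ and hence $\mathcal{E}_1(\Pi U_\epsilon, f) \to \mathcal{E}_1(\Pi U, f)$; it is essential here that the convergence in Proposition \ref{conv_one_pot} be in $D(\Lambda)$ and not merely in $L^2(\mu)$. On the right-hand side, since $G_\epsilon(z) = \tfrac12 \int_0^1 h_r\,\rho''_\epsilon(z_r)\,\d r$, formula \eqref{relation_cond_lt} gives $\mathbb{E}[ G_\epsilon(\beta)\Phi(|\beta|) ] \to \tfrac14 \int_0^1 h_r\, \tfrac{\d^2}{\d a^2}\Sigma^1_r(\Phi(X)\,|\,a)\rvert_{a=0}\,\d r$, which by the IbPF \eqref{exp_fst_part_ibpf_a_b_1} (note $\Phi\rvert_{C([0,1])}\in\mathcal{S}$) equals $E^1(\partial_h\Phi(X)) + E^1(\langle h'',X\rangle\,\Phi(X))$. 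Thus, for all $f\in\mathscr{S}_{K}$, one has $\mathcal{E}_1(\Pi U, f) = E^1(\partial_h\Phi(X)) + E^1(\langle h'',X\rangle\,\Phi(X))$.

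It then remains to rewrite this. For $\Phi(\zeta)=\exp(-\langle\theta,\zeta^2\rangle)$ one has $\partial_h\Phi(\zeta)=\langle\nabla\Phi(\zeta),h\rangle$; moreover the linear functional $\zeta\mapsto\langle h,\zeta\rangle$ lies in $D(\mathcal{E})$ with gradient the constant $h$ (it is approximated in $\mathcal{E}_1$-norm by $\psi_N(\langle h_n,\cdot\rangle)$, with $h_n\to h$ in $H$, $h_n\in\mathrm{Span}\{e_k\}$, and $\psi_N$ a smooth truncation of the identity), so $E^1(\partial_h\Phi(X)) = \int_K \langle h,\nabla f\rangle\,\d\nu = 2\,\mathcal{E}(\langle h,\cdot\rangle, f)$. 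Writing also $E^1(\langle h'',X\rangle\Phi(X)) = \int_K \langle h'',\zeta\rangle f(\zeta)\,\d\nu(\zeta)$ and $\mathcal{E}_1(\Pi U,f) = \mathcal{E}(\Pi U,f) + \int_K \Pi U\, f\,\d\nu$, the previous identity rearranges into \eqref{IbPF_Dirichlet} for all $f\in\mathscr{S}_{K}$. Finally, both sides of \eqref{IbPF_Dirichlet} are $\mathcal{E}_1$-continuous in $f$ — the left because $\langle h,\cdot\rangle-\tfrac12\Pi U\in D(\mathcal{E})$, the right because $\langle h'',\cdot\rangle-\Pi U\in L^2(\nu)$ (using $\int_K\|\zeta\|^2\,\d\nu<\infty$ and $\Pi U\in D(\mathcal{E})\subset L^2(\nu)$) — so by density of $\mathscr{S}_{K}$ in $D(\mathcal{E})$ (Lemma \ref{density}) the identity extends to all $f\in D(\mathcal{E})$.

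The hard part is really contained in the two inputs, both already established: the $D(\Lambda)$-convergence $U_\epsilon\to U$ of Proposition \ref{conv_one_pot}, which is where the genuine heat-kernel and Gaussian-density estimates live, and the identity \eqref{relation_cond_lt} turning the distributional last term of the IbPF into a limit of honest measures. Within the assembly above, the only delicate points are that one must pass to the limit in the $D(\Lambda)$ topology (mere $L^2(\mu)$-convergence of $U_\epsilon$ would not let one identify $\lim_\epsilon\mathcal{E}_1(\Pi U_\epsilon,f)$) and that one must check the small structural facts $\langle h,\cdot\rangle\in D(\mathcal{E})$ with gradient $h$ and $\langle h'',\cdot\rangle\in L^2(\nu)$ needed for the closing density argument.
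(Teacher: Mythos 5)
Your proof is correct and follows essentially the same route as the paper: the chain one-potential identity $\Lambda_1(U_\epsilon, f\circ j) = \int G_\epsilon (f\circ j)\,\d\mu$, pass to the limit via Proposition \ref{conv_one_pot} and Lemma \ref{projection} to get $\mathcal{E}_1(f,\Pi U)$, combine with \eqref{relation_cond_lt} and the IbPF \eqref{exp_fst_part_ibpf_a_b_1}, rearrange, and extend by density of $\mathscr{S}_K$. You merely spell out a few points the paper leaves implicit (that $\langle h,\cdot\rangle\in D(\mathcal{E})$ with constant gradient $h$, and the $\mathcal{E}_1$-continuity of both sides needed to close the density argument), which is a sound and welcome bit of extra care.
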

\begin{proof}
By the density of $\mathscr{S}_{K}$ in $D(\mathcal{E})$ proved in Lemma \ref{density}, it is enough to consider
$f \in \mathscr{S}_{K}$. By \eqref{relation_cond_lt}
\[ \begin{split}
&\frac{1}{4} \int_{0}^{1} {\rm d} r\, h_{r} \frac{\d^{2}}{\d a^{2}} \Sigma^1_r\left(f(X) \,|\, a\right) \, \biggr \rvert_{a=0} = \frac{1}{2} \, \lim_{\epsilon \to 0} \mathbb{E} \left[ f(|\beta|)  \int_{0}^{1} h_{r} \, \rho_{\epsilon}''(\beta_{r}) \, \d r \right] \\
&= \lim_{\epsilon \to 0} \int (f\circ j) \, G_{\epsilon} \d \mu 
= \lim_{\epsilon \to 0} \, \Lambda_{1} ( f \circ j , \, U_{\epsilon}) 
=  \, \Lambda_{1} ( f \circ j, \, U) = \, \mathcal{E}_{1} (f , \, \Pi U).
\end{split}
\]
Therefore, for all $f \in \mathscr{S}_{K}$, the IbPF \eqref{exp_fst_part_ibpf_a_b_1} can be rewritten
\[ 2 \mathcal{E} (\langle h, \cdot \rangle, f ) = -  \int_{K} \langle h'', \zeta \rangle \, f(\zeta) \d \nu(\zeta) + \, \mathcal{E}_{1} (f , \, \Pi U),  \]
that is
\[  \mathcal{E} \left(\langle h, \cdot \rangle - \frac{1}{2} \Pi U, f \right) = - \frac{1}{2} \int_{K} (\langle h'', \zeta \rangle - \Pi  U(\zeta)) \, f(\zeta) \d \nu(\zeta).  \]  
The proof is complete.
\end{proof}

Recall that $M=(\Omega, \mathcal{F}, (u_t)_{t \geq 0}, (\mathbb{P}_x)_{x \in K})$ denotes the Markov process properly associated with the Dirichlet form $(\mathcal{E},D (\mathcal{E}))$ constructed above. 
Note that, by Theorem 5.2.2 in \cite{fukushima2010dirichlet}, for all $F \in D(\mathcal{E})$, we can write in a unique way
\[ 
F(u_{t}) - F(u_{0}) = M^{[F]}_{t} + N^{[F]}_{t}, \quad t \geq 0, 
\]
$\mathbb{P}_{\nu}$ a.s., where $M^{[F]}$ is a martingale additive functional, and $N^{[F]}$ is an additive functional of zero energy. Using this fact we can thus write $u$ as the weak solution to some SPDE, but with coefficients that are not explicit. However the formula \eqref{IbPF_Dirichlet} above will allow us to identify these coefficients. 

We can now finally state the result justifying that that the Markov process constructed above satisfies the SPDE \eqref{formal1} above.

\begin{thm}\label{fukushima_decomposition}
For all $h \in C^{2}_{c}(0,1)$, we have 
\[ \langle u_{t}, h \rangle - \langle u_0, h \rangle = M_{t} + N_{t}, \qquad \mathbb{P}_{u_0}-\text{a.s.}, \quad \text{q.e.} \ u_0 \in K.  \]
Here $(N_{t})_{t \geq 0}$ is a continuous additive functional of zero energy satisfying
\[N_{t} - \frac{1}{2} \int_{0}^{t} \langle h'', u_{s} \rangle \d s = \underset{\epsilon \to 0}{\lim} \, N^{\epsilon}_{t}, \qquad
N^{\epsilon}_{t} := -\frac{1}{4} \int_{0}^{t} \langle \rho''_{\epsilon}(u_{s}), h \rangle \d s, 
\]
in $\mathbb{P}_{\nu}$-probability, uniformly in $t$ on finite intervals.
Moreover, $(M_{t})_{t \geq 0}$ is a martingale additive functional whose sharp bracket has the Revuz measure $\|h\|_{H}^{2} \, \nu$. Finally we also have
\[N_{t} - \frac{1}{2} \int_{0}^{t} \langle h'', u_{s} \rangle \d s = \underset{k \to \infty}{\lim} \, N^{\epsilon_k}_{t}
\]
along a subsequence $\epsilon_k\to 0$ in $\mathbb{P}_{u_0}$-probability, for q.e. $u_0 \in K$.  
\end{thm}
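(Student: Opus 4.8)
The plan is to obtain the decomposition by applying the Fukushima decomposition \cite[Thm.~5.2.2]{fukushima2010dirichlet} (available here since $(\mathcal{E},D(\mathcal{E}))$ is quasi-regular) to the function $F:=\langle h,\cdot\rangle$. First one checks $F\in D(\mathcal{E})$ with $\nabla F\equiv h$: writing $h_N:=\sum_{k\le N}\langle h,e_k\rangle e_k$ and letting $\chi_n$ be smooth truncations of the identity, the functionals $\chi_n(\langle h_N,\cdot\rangle)\in\mathcal{F}\mathcal{C}^\infty_b(K)$ converge to $F$ in $\mathcal{E}_1$ as $N,n\to\infty$, by dominated convergence both for $\|\cdot\|_{L^2(\nu)}$ and for $\|\nabla\cdot\|_{L^2(\nu;H)}$. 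The Fukushima decomposition then yields, $\mathbb{P}_{u_0}$-a.s. for q.e. $u_0$,
\[
\langle u_t,h\rangle-\langle u_0,h\rangle=F(u_t)-F(u_0)=M^{[F]}_t+N^{[F]}_t=:M_t+N_t,
\]
with $M^{[F]}$ a martingale additive functional of finite energy and $N^{[F]}$ a continuous additive functional of zero energy. Since the energy measure of $F$ for the gradient form $\mathcal{E}$ is $\mu_{\langle F\rangle}=\|\nabla F\|_H^2\,\nu=\|h\|_H^2\,\nu$, the sharp bracket $\langle M\rangle$ has Revuz measure $\|h\|_H^2\,\nu$ by \cite[Thm.~5.2.3]{fukushima2010dirichlet}.

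To identify $N_t$, set $G:=\langle h,\cdot\rangle-\tfrac12\Pi U$, with $U$ as in \eqref{limiting_one_pt} and $\Pi$ as in Lemma~\ref{projection}; note $U\in D(\Lambda)$ by Proposition~\ref{conv_one_pot}, hence $\Pi U\in D(\mathcal{E})$, and that $\langle h'',\cdot\rangle\in L^2(\nu)$. Denoting by $L$ the generator of $(\mathcal{E},D(\mathcal{E}))$, the IbPF \eqref{IbPF_Dirichlet} says precisely that $G\in D(L)$ with $LG=\tfrac12(\langle h'',\cdot\rangle-\Pi U)$. Since for $w\in D(L)$ the zero-energy part of the Fukushima decomposition of $w$ is $\int_0^\cdot Lw(u_s)\,\d s$, and since $w\mapsto(M^{[w]},N^{[w]})$ is linear, from $\langle h,\cdot\rangle=G+\tfrac12\Pi U$ one gets
\[
N_t=\int_0^tLG(u_s)\,\d s+\tfrac12 N^{[\Pi U]}_t=\tfrac12\int_0^t\langle h'',u_s\rangle\,\d s-\tfrac12\Big(\int_0^t\Pi U(u_s)\,\d s-N^{[\Pi U]}_t\Big),
\]
i.e. $N_t-\tfrac12\int_0^t\langle h'',u_s\rangle\,\d s=-\tfrac12 A_t$, where $A_t:=\int_0^t\Pi U(u_s)\,\d s-N^{[\Pi U]}_t$.

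It remains to show $-2N^\epsilon_t\to A_t$. Writing $g_\epsilon:=G_\epsilon|_K$ and using that $\rho''_\epsilon$ is even, one has $G_\epsilon=g_\epsilon\circ j$ with $j$ the absolute-value map \eqref{absolute_value_map}, and $N^\epsilon_t=-\tfrac12\int_0^t g_\epsilon(u_s)\,\d s$. Since $U_\epsilon=\mathbf{R}_1 G_\epsilon$, one has $\Lambda_1(U_\epsilon,\phi)=(G_\epsilon,\phi)_{L^2(\mu)}$ for all $\phi\in D(\Lambda)$; taking $\phi=f\circ j$ with $f\in D(\mathcal{E})$ and using $G_\epsilon=g_\epsilon\circ j$ together with $\nu=j_*\mu$ gives $\Lambda_1(U_\epsilon,f\circ j)=(g_\epsilon,f)_{L^2(\nu)}$, while by Lemma~\ref{projection} the left-hand side equals $\mathcal{E}_1(\Pi U_\epsilon,f)$. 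Hence $\Pi U_\epsilon=R_1 g_\epsilon\in D(L)$ with $L(\Pi U_\epsilon)=\Pi U_\epsilon-g_\epsilon$, so that
\[
-2N^\epsilon_t=\int_0^t g_\epsilon(u_s)\,\d s=\int_0^t\Pi U_\epsilon(u_s)\,\d s-N^{[\Pi U_\epsilon]}_t.
\]
By Proposition~\ref{conv_one_pot} and boundedness of $\Pi$ (Lemma~\ref{projection}), $\Pi U_\epsilon\to\Pi U$ in $D(\mathcal{E})$, hence in $L^2(\nu)$; therefore, by stationarity under $\mathbb{P}_\nu$, $\mathbb{E}_\nu\big[\sup_{t\le T}|\int_0^t(\Pi U_\epsilon-\Pi U)(u_s)\,\d s|\big]\le T\,\|\Pi U_\epsilon-\Pi U\|_{L^1(\nu)}\to0$, and by the continuity of the Fukushima decomposition along $\mathcal{E}_1$-convergent sequences, $\sup_{t\le T}|N^{[\Pi U_\epsilon]}_t-N^{[\Pi U]}_t|\to0$ in $\mathbb{P}_\nu$-probability, and along a subsequence $\mathbb{P}_{u_0}$-a.s. for q.e. $u_0$. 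Thus $-2N^\epsilon_t\to A_t$ uniformly in $t$ on finite intervals in $\mathbb{P}_\nu$-probability, and along a subsequence $\mathbb{P}_{u_0}$-a.s. for q.e. $u_0$; combined with the previous paragraph this is exactly the asserted convergence of $N^\epsilon_t$.

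The main obstacle is the continuity of $w\mapsto N^{[w]}$ with respect to $\mathcal{E}_1$-convergence, uniformly in $t$ on compact intervals, together with the passage from convergence in $\mathbb{P}_\nu$-probability to convergence $\mathbb{P}_{u_0}$-a.s. for q.e. $u_0$ along a subsequence: both rest on the capacitary estimates of \cite[Ch.~5]{fukushima2010dirichlet}, transported to the present infinite-dimensional setting via quasi-regularity and the transfer method of \cite[Ch.~VI]{ma2012introduction}. By contrast, the genuinely analytic input — the convergence $U_\epsilon\to U$ in $D(\Lambda)$ — is Proposition~\ref{conv_one_pot}, and the structural link between the integration by parts formula and the drift is entirely encoded in \eqref{IbPF_Dirichlet}.
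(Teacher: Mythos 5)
Your proof is correct and follows essentially the same route as the paper: it identifies the drift via \eqref{IbPF_Dirichlet} (equivalently, $G=\langle h,\cdot\rangle-\tfrac12\Pi U\in D(L)$ with $LG=\tfrac12(\langle h'',\cdot\rangle-\Pi U)$), exploits $\Pi U_\epsilon=R_1 g_\epsilon$, and passes to the limit using $\Pi U_\epsilon\to\Pi U$ in $D(\mathcal{E})$ (Proposition~\ref{conv_one_pot} plus Lemma~\ref{projection}), together with the continuity statements of \cite[Ch.~5]{fukushima2010dirichlet}. The only departure is organizational — you decompose $F=\langle h,\cdot\rangle$ directly (after checking $F\in D(\mathcal{E})$, a verification the paper leaves implicit) and then split $F=G+\tfrac12\Pi U$ by linearity of $w\mapsto N^{[w]}$, whereas the paper writes two separate Fukushima decompositions and adds them; the substance and the inputs are the same.
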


\begin{proof}
On the one hand, by \eqref{IbPF_Dirichlet}, we can write
\begin{equation} 
\label{fukushima_decomposition_one}
\langle u_{t}, h \rangle - \frac{1}{2} \Pi U (u_{t}) - \left(\langle u_0, h \rangle - \frac{1}{2} \Pi U (u_0) \right) = N^{(1)}_{t} + M^{(1)}_{t}, 
\end{equation}
where $N^{(1)}$ is the continuous additive functional of zero energy given by
\[
N^{(1)}_{t} = \frac{1}{2} \int_{0}^{t} \left(\langle h'', u_{s} \rangle - \Pi  U(u_{s}) \right) \d s, \quad t \geq 0
\]
and $M^{(1)}$ is defined by \eqref{fukushima_decomposition_one}. On the other hand, for all $\epsilon >0$, by definition of $U_{\epsilon}$, we have for $G_\epsilon$ as in \eqref{Geps}
\[ \Lambda_{1} ( U_{\epsilon}, \Phi) = \int_{H} G_{\epsilon} \,\Phi \d\mu, \quad \Phi \in D( \Lambda ). \]
Hence, remarking that $G_{\epsilon} = g_{\epsilon} \circ j$, where $g_{\epsilon}: K \to \mathbb{R}$ is the functional defined by
\[ g_{\epsilon}(z) := \frac12\int_{0}^{1} h_{r} \, \rho_{\epsilon}''(z_{r}) \d r = \frac12\langle \rho''_{\epsilon}(z), h \rangle, \]
by Lemma \ref{projection}, we obtain for all $f \in D( \mathcal{E} )$
\begin{equation}
\label{u_epsilon}
\mathcal{E}_{1} ( \Pi U_{\epsilon}, f) = \int_{K} f(z) \, g_{\epsilon}(z) \d \nu(z) = - \int_{K} f(z)(\Pi U_{\epsilon}(z) - g_{\epsilon}(z)) \d \nu(z).
\end{equation}
As a consequence, we have the decomposition
\begin{equation}
\label{fukushima_decomposition_two}
\frac{1}{2} \Pi U_{\epsilon} (u_{t}) - \frac{1}{2} \Pi U_{\epsilon} (u_0) = N^{(2,\epsilon)}_{t} + M^{(2,\epsilon)}_{t}, 
\end{equation}
where $N^{(2, \epsilon)}$ is the continuous additive functional of zero energy given by
\[N^{(2, \epsilon)}_{t} = \frac{1}{2} \int_{0}^{t}  \left( \Pi U_{\epsilon} (u_{s})- g_{\epsilon}(u_{s})\right) \d s, \quad t \geq 0\]
and $M^{(2,\epsilon)}$ is defined by \eqref{fukushima_decomposition_two}. 
Since $U_{\epsilon} \underset{\epsilon \to 0}{\longrightarrow} U$ in $D(\Lambda)$ by Proposition \ref{conv_one_pot}, by the continuity of $\Pi:D(\Lambda) \to D(\mathcal{E})$, we have the convergence $\Pi U_{\epsilon} \underset{\epsilon \to 0}{\longrightarrow} \Pi U$ in $D(\mathcal{E})$. Therefore, setting
\[ M^{(2)}_{t} = M^{[\Pi U]}_{t}, \qquad N^{(2)}_{t} := N^{[\Pi U]}_{t}, \]
then, by (5.1.1), (5.2.22) and (5.2.25) in \cite{fukushima2010dirichlet},
we have
\[ \Pi U_{\epsilon}(u_{t}) - \Pi U_{\epsilon}(u_{0}) \underset{\epsilon \to0}{\longrightarrow} \Pi U(u_{t}) - \Pi U(u_{0}), 
\quad  M^{(2, \epsilon)}_{t} \underset{\epsilon \to0}{\longrightarrow} M^{(2)}_{t}, \quad  N^{(2, \epsilon)}_{t} \underset{\epsilon \to \infty}{\longrightarrow} N^{(2)}_{t} \]
in $\mathbb{P}_{\nu}$-probability, for the topology of uniform convergence on finite intervals of $t \in \mathbb{R}_{+}$. 
Adding equality \eqref{fukushima_decomposition_two} to \eqref{fukushima_decomposition_one} yields 
\[ \langle u_{t}, h \rangle - \langle u_0, h \rangle = M_{t} + N_{t}, \]
with $M_{t} = M^{1}_{t} + M^{2}_{t}$
and
\[\begin{split}
N_{t} &= N^{1}_{t} + N^{2}_{t} = \frac{1}{2} \int_{0}^{t} \left(\langle h'', u_{s} \rangle - \Pi  U(u_{s}) \right) \d s + \underset{\epsilon \to 0}{\lim} \, \frac{1}{2} \int_{0}^{t}  \left( \Pi U_{\epsilon} (u_{s})- g_{\epsilon}(u_{s})\right) \d s \\
&= \frac{1}{2} \int_{0}^{t} \langle h'', u_{s} \rangle \d s -  \underset{\epsilon \to 0}{\lim} \, \frac{1}{2} \int_{0}^{t}  g_{\epsilon}(u_{s}) \d s,
\end{split}\]
Moreover, note that $M=M^{[F_h]}$, where $F_h \in D(\mathcal{E})$ is given by
\[ F_h(z) := \langle z, h \rangle, \quad z \in K. \]
Hence, by Theorem 5.2.3 in \cite{fukushima2010dirichlet}, $\mu_{<M>}$ is given by $\|h\|_{L^{2}(0,1)}^{2} \cdot \nu$. For the last statement, we apply \cite[Corollary 5.2.1]{fukushima2010dirichlet}. 
\end{proof}

\subsection{A distinction result}

As a consequence of our IbPFs and the above constructions, we can prove that the Markov process $(u_t)_{t\geq 0}$ constructed above is not identically equal in law to the process corresponding to the modulus of the solution $(v_t)_{t\geq 0}$ to the stochastic heat equation, as one could be tempted to infer in analogy with the relation between the invariant measures $\mu$ and $\nu$.

Let $K^{\mathbb{R}_+}$ denote the space of functions from $\mathbb{R}_{+}$ to $K$, endowed with the product $\sigma$-algebra. For all $x \in K$, let $P_{x}$ be the law, on $K^{\mathbb{R}_+}$, of the Markov process $(u_t)_{t \geq 0}$ associated with $\mathcal{E}$, started from $x$.
Similarly, for all $z \in H$, let $\mathbf{P}_{z}$ be the law, on $K^{\mathbb{R}_+}$, of $(|v_{t}|)_{t \geq 0}$, where $(v_{t})_{t \geq 0}$ is the solution of the stochastic heat equation \eqref{solution_she}, with $v_{0}=z$. 

\begin{thm}
\label{dist_res}
\[\mu \left( \{ z \in H : \, P_{|z|} \neq \mathbf{P}_{z} \} \right) > 0. \] 
\end{thm}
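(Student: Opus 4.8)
The plan is to argue by contradiction: suppose that $P_{|z|} = \mathbf{P}_z$ for $\mu$-a.e. $z \in H$. The strategy is to compare the Fukushima-type decompositions of the additive functional $t \mapsto \langle u_t, h \rangle$ for the two processes and show that this forces an integration by parts formula for the reflected Brownian bridge that contradicts the IbPF \eqref{exp_fst_part_ibpf_a_b_1}. First I would observe that both $M = (\Omega,\mathcal{F},(u_t),(\mathbb{P}_x))$ and the modulus process $(|v_t|)_{t \geq 0}$ are, under the assumption, the same Markov process, and both have $\nu$ as a (reversible, resp. stationary) measure; in particular the stationary process $(|v_t|)_{t\geq 0}$ started from $\nu$ would have to be symmetric and associated with the Dirichlet form $(\mathcal{E},D(\mathcal{E}))$, so its generator must agree with that of $M$.

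The key computation is then to write down the Fukushima decomposition of $\langle \cdot , h\rangle$ for the modulus of the solution to the stochastic heat equation. On the one hand, by It\^o's formula (or the Tanaka formula) applied to $v \mapsto |v|$, we have, $\mathbf{P}_z$-a.s.,
\[
\langle |v_t|, h \rangle - \langle |v_0|, h \rangle = \frac{1}{2}\int_0^t \langle h'', |v_s| \rangle \, \d s + \int_0^t \langle \mathrm{sgn}(v_s)\, h , \d v_s^{\mathrm{mart}} \rangle + \int_0^t \langle h, \d L^0_s \rangle,
\]
where $L^0$ is the (space-dependent) local time of $v$ at $0$, which by the results of \cite{zambotti2005integration} has a Revuz measure of the form $\frac12 \nu$ times a local time density --- crucially, the drift term here is a genuine \emph{positive measure} (a local time), with no second-derivative-of-a-delta structure. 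On the other hand, Theorem \ref{fukushima_decomposition} gives
\[
\langle u_t, h \rangle - \langle u_0, h \rangle = M_t + N_t, \qquad N_t - \tfrac12 \int_0^t \langle h'', u_s\rangle\,\d s = \lim_{\epsilon\to 0}\left(-\tfrac14\int_0^t \langle \rho''_\epsilon(u_s), h\rangle\,\d s\right),
\]
so the zero-energy drift part involves $\lim_\epsilon \rho''_\epsilon$, i.e. (formally) the \emph{second derivative} of a Dirac mass. If the two processes coincided, the continuous additive functionals of zero energy would have to agree, forcing $\lim_\epsilon(-\tfrac14\langle\rho''_\epsilon(u_\cdot),h\rangle)$ to equal (the Revuz-measure version of) a positive local-time term. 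Equivalently, at the level of the IbPFs, \eqref{exp_fst_part_ibpf_a_b_1} would have to coincide with the (strictly weaker) formula \eqref{|beta|} recalled in Remark \ref{weaker}, which it does not: the right-hand side of \eqref{exp_fst_part_ibpf_a_b_1} is $\tfrac14\int_0^1 h_r\, \tfrac{\d^2}{\d a^2}\Sigma^1_r(\Phi(X)\,|\,a)\big|_{a=0}\,\d r$, a genuinely distributional object, whereas the drift coming from the modulus of the SHE is an integral against an occupation density. To make the contradiction quantitative I would test against a specific $\Phi \in \mathscr{S}$ (e.g. $\Phi(\zeta) = \exp(-\langle m,\zeta^2\rangle)$ for a well-chosen $m$) and a specific $h$, compute both sides explicitly using Lemma \ref{lap_cond_bridge} for the left side and the known structure of the SHE local time for the right side, and exhibit a numerical discrepancy.

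The main obstacle I anticipate is making rigorous the passage from ``the laws $P_{|z|}$ and $\mathbf{P}_z$ agree for $\mu$-a.e.\ $z$'' to ``the corresponding zero-energy additive functionals in the Fukushima decomposition agree''. This requires knowing that $(|v_t|)_{t\geq 0}$ is genuinely symmetric with respect to $\nu$ and associated with a Dirichlet form, so that the Fukushima decomposition applies to it and is unique; this in turn uses that $v$ started from $\mu$ is stationary and reversible (true, since $\mu$ is reversible for the OU/SHE dynamics and the absolute value map intertwines $\mu$ with $\nu$, cf.\ the isometry before Proposition \ref{closability}), plus the identification $\mathcal{E}(f,g) = \Lambda(f\circ j, g\circ j)$ from Proposition \ref{closability}. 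Once that structural point is in place, the rest is a matter of carefully comparing the two explicit drift expressions: the one from Theorem \ref{fukushima_decomposition} (involving $\lim_\epsilon \rho''_\epsilon$) and the one from the Tanaka/It\^o-Wentzell formula for $|v|$ (involving the SHE local time at $0$, as analyzed in \cite{zambotti2005integration}), and observing they cannot be equal as additive functionals --- equivalently, that \eqref{exp_fst_part_ibpf_a_b_1} and \eqref{|beta|} have different right-hand sides, which is exactly the content of Remark \ref{weaker}.
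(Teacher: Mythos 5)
Your high-level strategy---argue by contradiction and show that equality of the laws would force incompatible drift structures on the common Markov process---does match the paper's, but the concrete plan has two gaps that prevent it from closing.

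First, there is no Tanaka formula for $|v|$. For the stochastic heat equation, $t \mapsto v(t,x)$ is not a semimartingale, so neither It\^o nor Tanaka can be invoked, and the decomposition you write with a positive local-time drift $\int_0^t \langle h, \d L^0_s\rangle$ is not available; this is precisely one of the difficulties the introduction emphasizes. The statement from \cite{zambotti2005integration} and \cite{grothaus2016integration} that the paper actually uses is the \emph{static} IbPF \eqref{|beta|}, whose right-hand side features the Wick-renormalized square $:\!\dot{\beta}^2\!:$ integrated against $\d L^0$; this is neither a positive measure nor ``$\tfrac12\nu$ times a local time density.'' Second, and more fundamentally, testing against $\Phi \in \mathscr{S}$ cannot produce a contradiction: such $\Phi$ depends only on $\zeta^2$, so $\Phi(\beta) = \Phi(|\beta|)$, and then \eqref{|beta|} and \eqref{exp_fst_part_ibpf_a_b_1} have literally the same left-hand side and hence the same right-hand side---Remark \ref{weaker} does not assert that their right-hand sides differ, only that \eqref{|beta|} is written in terms of $\beta$ (via $:\!\dot\beta^2\!:$) rather than $X=|\beta|$, which is what makes it unusable for writing the SPDE in $u$ alone. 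The paper's proof obtains a contradiction by instead using the linear functional $\Psi_k(z) = \exp(\langle k, z\rangle)$, which does \emph{not} factor through $|z|$, together with the projection $\Pi$ of Lemma~\ref{projection}. Under the hypothesis, $\Pi$ is forced to coincide with conditional expectation given $|\beta|$; the two quantities $J(t)$ (computed via \eqref{|beta|} applied to $\Psi_k(\beta)$) and $L(t)$ (computed via \eqref{IbPF_Dirichlet} applied to $\Pi\Psi_k$) then reduce to explicit Gaussian integrals, and their equality forces the Cameron--Martin function $K = Qk$ to satisfy a first-order quadratic ODE which fails for generic $k$. Without a test functional that distinguishes $\beta$ from $|\beta|$, the mechanism for the contradiction is missing.
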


\begin{proof}
Assume by contradiction that $P_{|z|} = \mathbf{P}_{z}$ for $\mu$-a.e. $z \in H$. Then, recalling that $(\mathbf{Q}_{t})_{t \geq 0}$ denotes the semigroup associated with $\Lambda$, and $(Q_{t})_{t \geq 0}$ the semigroup associated with $\mathcal{E}$, we would have
\[ \mathbf{Q}_{t}(f \circ j) = (Q_{t} f) \circ j, \quad \mu - \text{a.e.}, \]  
for all $t \geq 0$ and $f \in L^{2}(\nu)$. Therefore, the corresponding families of resolvents $(\mathbf{R}_{\lambda})_{\lambda  >0}$ and $(R_{\lambda})_{\lambda >0}$ would satisfy, for all $f \in L^{2}(\nu)$ 
\[\mathbf{R}_{1} (f \circ j) = (R_{1} f) \circ j, \]
where the equality holds in $L^{2}(\mu)$. In particular, this shows that $(R_{1} f) \circ j \in D(\Lambda)$ for any $f$ as above. We then claim that, for all $F \in D(\Lambda)$, $\Pi F = \mathbb{E} [F(\beta) \, | \, |\beta| \,]$ $\mu$-a.e. Indeed, by the previous observations, for all $f \in L^{2}(\nu)$, it holds
\begin{equation}\label{pi_is_cond_exp}
\begin{split}
\int_{H} (f \circ j )(z) F(z) \d \mu(z) &= \Lambda_{1}( \mathbf{R}_{1}(f \circ j), F) = \Lambda_{1}( (R_{1}f) \circ j, F) 
\\&= \mathcal{E}_{1} (R_{1}f , \Pi F) = \int_{K} f(x) (\Pi F) (x) \d \nu(x),
\end{split}\end{equation}
i.e. $\Pi F = \mathbb{E} [F(\beta) \, | \, |\beta| \,]$ $\mu$-a.e., as claimed. By \eqref{pi_is_cond_exp} and the first equality in Lemma \ref{projection}, we deduce that, for all $f \in D(\mathcal{E})$ and $F \in D(\Lambda)$
\[\Lambda(F,f \circ j) = \mathcal{E}(\Pi F,f). \] 
Consider now the process $(v_{t})_{t \geq 0}$ associated with $\Lambda$ and started from $v_{0}=\beta$, where $\beta$ is a Brownian bridge on $[0,1]$. Consider also the process $(u_{t})_{t \geq 0}$ associated with $\mathcal{E}$ under the law $\mathbb{P}_{\nu}$ (so that, in particular, $u_{0}\overset{(d)}{=}|\beta|$). Thus the processes $v$ and $u$ are stationary, and $|v| \overset{(d)}{=} u$ by our assumption. Let us set
\[
A_t:=\langle |v_t| , h \rangle - \langle |v_0| , h \rangle - \frac{1}{2} \int_0^t\langle |v_{s}| , h'' \rangle \d s,
\]
\[
C_t:=\langle u_t , h \rangle - \langle u_0 , h \rangle - \frac{1}{2} \int_0^t \langle u_s , h'' \rangle \d s.
\]
Let further $k \in C^{2}([0,1])$ with $k(0)=k(1)=0$, and consider the functionals $\Psi_k:H\to\R$ and $\tilde\Psi_k:K\to\R$ given by
\[
\Psi_k(z):=\exp(\langle k,z\rangle), \qquad
\tilde\Psi_k(y):=\E\left[\Psi_k(\beta)\,|\, |\beta|=y\, \right], \qquad y\in K.
\]
Note that $\Psi_{k} \in D(\Lambda)$, and recall that, by the above remarks, $\tilde{\Psi}_{k} = \Pi \Psi_{k}$ $\mu$-a.e., so in particular $\tilde{\Psi}_{k} \in D(\mathcal{E})$. We then have
\[
\begin{split}
&J(t):=- \frac{d}{dt}\E\left[ A_t\, \tilde\Psi_k(|v_0|)\right]   = 
\\ & = - \frac{d}{dt}\E\left[ (\langle u_t , h \rangle - \langle u_0 , h \rangle )\, \tilde\Psi_k(u_0)\right] + \frac{1}{2} \frac{d}{dt} \mathbb{E} \left[\int_{0}^{t} \langle h'',|v_{s}|\rangle \d s \, \Psi_k(\beta)\right] \\
&= \mathcal{E}(\langle \cdot , h \rangle \, , \, \tilde \Psi_k) + \frac{1}{2} \E[\langle h'',|\beta|\rangle\Psi_k(\beta)]  
= \Lambda(\langle |\cdot| , h \rangle \, , \, \Psi_k) + \frac{1}{2} \E[\langle h'',|\beta|\rangle\Psi_k(\beta)] \\
& = \frac{1}{2} \E[\langle \nabla \Psi_k(\beta),{\rm sign}(\beta)\,h\rangle + \langle h'',|\beta|\rangle\Psi_k(\beta)] 
= \E\left[\Psi_k(\beta)\int_0^1 h \, :\dot{\beta}^2: \d L^0\right]
\end{split}
\]
by (3.10) in \cite{zambotti2005integration}, or rather its analogue for the Brownian bridge as stated in Remark 1.3 of \cite{grothaus2016integration}. But, by \cite[Corollary 3.4]{zambotti2005integration} and \cite[Theorem 3.2]{grothaus2016integration}, the last quantity equals
\[ \begin{split}
\sqrt{\frac{1}{2 \pi}}  \, e^{\frac{1}{2} \langle Qk, k \rangle} \int_{0}^{1} \frac{h_r}{\sqrt{r(1-r)}} \exp\left( - \frac{K_{r}^{2}}{2r(1-r)} \right) \lambda(K'_r, -K_{r}, r) \d r,
\end{split}
\]
where $K= Q k$, with $Q$ the covariance operator of $\beta$,
\[ 
(Q k)_{r} = \int_{0}^{1} (r \wedge \sigma - r \sigma) \, k_{\sigma} \d \sigma, \qquad r \in [0,1],
 \]
and $\lambda : \mathbb{R}^{2} \times [0,1] \to \mathbb{R}$ is defined by   
\[ \lambda(x,y,r) := x^{2} + xy \frac{1-2r}{r(1-r)}  +  y^{2}\frac{(1-2r)^{2}}{4r^{2}(1-r)^{2}} - \frac{1}{4r(1-r)}, \quad x,y \in \mathbb{R}, \ r \in [0,1]. 
\]
Hence, 
\begin{equation}
\label{quantity_one}
\begin{split}
J(t) =  \sqrt{\frac{2}{\pi}} e^{\frac{1}{2} \langle Qk, k \rangle} \int_{0}^{1} \frac{h_r}{\sqrt{r(1-r)}} \exp\left( - \frac{K_{r}^{2}}{2r(1-r)} \right) \lambda(K'_{r}, -K_{r}, r) \d r.
\end{split}
\end{equation}
On the other hand
\[
\begin{split}
L(t):=- \left. \frac{d}{dt}\E\left[ C_t\, \tilde\Psi_k(|v_0|)\right] \,\right|_{t=0} 
& = 
\mathcal{E} (\Pi \Psi_k , \langle \cdot, h \rangle) + \frac{1}{2} \E[\langle h'',|\beta|\rangle \, \Pi \Psi_k(|\beta|)] 
\\ = \frac{1}{2}\mathcal{E}(\Pi U,\Pi \Psi_{k})
&= \frac{1}{4} \, \underset{\epsilon \to 0}{\lim} \, \mathbb{E} \left[\int_{0}^{1} h_r \, \rho''_{\epsilon}(|\beta_r|) \d r \, \Pi \Psi_k(|\beta|) \right],
\end{split}
\]
where we used \eqref{IbPF_Dirichlet} to obtain the second equality, and the fact that $U = \underset{\epsilon \to 0}{\lim}  \, U_{\epsilon}$ in $D(\mathcal{E})$, combined with \eqref{u_epsilon}, to obtain the third one. Therefore, recalling that $\Pi \Psi_{k} = \E(\Psi_{k} \, | \, |\beta|)$ $\mu$-a.e., we have
\[\begin{split}
L(t) =  \frac14 \lim_{\epsilon\to 0} \mathbb{E} \left[\int_{0}^{1} h_r \, \rho''_{\epsilon}(|\beta_r|) \d r \, \Psi_{k}( \beta ) \right]
 = \frac14 \lim_{\epsilon\to 0} \mathbb{E} \left[ \int_{0}^{1} h_r \, \rho''_{\epsilon}(\beta_r) \d r \, e^{\langle k, \beta \rangle} \right].
\end{split}
\]
By the Cameron-Martin formula, for all $\epsilon >0$
\[ \begin{split}
&  \frac{1}{4} \, \mathbb{E} \left[\int_{0}^{1} h_r \, \rho''_{\epsilon}(\beta_r) \d r \, e^{\langle k, \beta \rangle} \right] = 
\\
&= \frac{1}{4} e^{\frac{1}{2} \langle Qk, k \rangle} \int_{0}^{1} \frac{h_r}{\sqrt{2 \pi r(1-r)}} \int_{\mathbb{R}} \rho_{\epsilon}''(a) \exp\left( - \frac{(a-K_{r})^{2}}{2r(1-r)} \right) {\rm d} a\d r \\
&\underset{\epsilon \to 0}{\to} \frac{1}{4} e^{\frac{1}{2} \langle Qk, k \rangle} \int_{0}^{1} \frac{h_r}{\sqrt{2 \pi r(1-r)}} \left[\frac{K_{r}^{2} - r(1-r)}{r^{2}(1-r)^{2}}\right] \exp\left( - \frac{K_{r}^{2}}{2r(1-r)} \right){\rm d}r.
\end{split}\]
Hence we obtain
\begin{equation} 
\label{quantity_two}
\begin{split}
&L(t) = \frac{1}{4} e^{\frac{1}{2} \langle Qk, k \rangle} \int_{0}^{1} \frac{h_r}{\sqrt{2 \pi r(1-r)}} \left[\frac{K_{r}^{2} - r(1-r)}{r^{2}(1-r)^{2}}\right] \exp\left( - \frac{K_{r}^{2}}{2r(1-r)} \right){\rm d}r.
\end{split}
\end{equation}
Since $|v|$ and $u$ have the same law, $J(t)=L(t)$ and therefore the right-hand sides of \eqref{quantity_one} and \eqref{quantity_two} above are equal. This being true for any $h \in C^{2}_{c}(0,1)$, we deduce that
\[ \frac{K_{r}^{2} - r(1-r)}{4 r^{2}(1-r)^{2}} = \lambda(K'_{r}, -K_{r}, r),  \] 
for a.e. $r \in (0,1)$, hence for all $r$ by continuity. We thus deduce that 
\begin{equation*} 
(K'_{r})^{2} - \frac{1-2 r}{r(1-r)} K_{r}K'_{r} - \frac{1}{r(1-r)} K_{r}^{2} = 0, \qquad \forall \, r \in (0,1). 
\end{equation*}
Since we can choose $k \in C^{2}_{c}(0,1)$ such that $K = Q k$ does not satisfy the above equation, we obtain a contradiction.  
\end{proof}
 
\section{Conjectures and open problems}
\label{sect_conj_dynamics}

Theorem \ref{statement_ibpf} above enables us to conjecture the structure of the Bessel SPDEs for $\delta < 3$. The idea is that the right-hand side of the IbPFs \eqref{exp_fst_part_ibpf_a_b} (respectively \eqref{exp_fst_part_ibpf_a_b_1}) corresponds to the logarithmic derivative of the measure $P^{\delta}$ for $\delta \in (0,3) \setminus \{1\}$ (resp. $\delta=1$), which should yield the drift in the SPDEs we are looking for. 
More precisely, considering for instance the case $\delta \in (1,3)$, for all $\Phi \in \mathcal{S}$, we may rewrite the last term in the IbPF \eqref{exp_fst_part_ibpf_a_b} as follows
\[
\begin{split}
&- \kappa(\delta) \int_{0}^{1} h_{r} \int_{0}^{\infty} a^{\delta-4}\left( \Sigma^\delta_r\left(\Phi(X) \,|\, a\right) - \Sigma^\delta_r\left(\Phi(X) \,|\, 0\right) \right) \d a \, \d r = \\
&= - \kappa(\delta) \, \lim_{\epsilon \to 0} \lim_{\eta \to 0} \mathbb{E} \left[ \Phi(X)  \int_{0}^{1} h_{r} \, \left( \frac{\mathbf{1}_{\{X_r \geq \epsilon\}}}{X_r^{3}} - 2 \frac{\epsilon^{\delta-3}}{3-\delta} \frac{\rho_{\eta}(X_r)}{X_r^{\delta-1}} \right) \d r \right], 
\end{split}
\]
where the mollifying functions $\rho_\eta, \eta >0$ are as in \eqref{def_mollifier}.  
As a consequence of this equality, we may write formally the gradient dynamics corresponding to $P^{\delta}$, $\delta \in (1,3)$, as follows
\[  \partial_{t} u = \frac{1}{2} \partial^{2}_{x} u + \xi + \frac{\kappa(\delta)}{2} \lim_{\epsilon \to 0} \lim_{\eta \to 0} \, \left( \frac{\mathbf{1}_{\{u \geq \epsilon\}}}{u^{3}} - 2 \frac{\epsilon^{\delta-3}}{3-\delta}  \frac{\rho_{\eta}(u)}{u^{\delta-1}} \right), \]
where $\xi$ denotes space-time white noise on $\mathbb{R}_{+} \times (0,1)$. Assuming now the existence of a local time process $(\ell^a_{t,x})_{x \in (0,1), t, a \geq 0}$ satisfying the occupation times formula \eqref{otf}
and possessing sufficient regularity at $a=0$, we could in turn write 
\[\lim_{\epsilon \to 0} \lim_{\eta \to 0} \, \left( \frac{\mathbf{1}_{u \geq \epsilon}}{u^{3}} - 2\frac{\epsilon^{\delta-3}}{3-\delta} \frac{\rho_{\eta}(u)}{u^{\delta-1}} \right) = \int_{0}^{+\infty} \ a^{\delta-4} (\ell^a_{t,x} - \ell^0_{t,x}) \d a, \]
so the SPDE could be written:
\[  \partial_{t} u = \frac{1}{2} \partial^{2}_{x} u + \xi + \frac{\kappa(\delta)}{2} \frac{\partial}{\partial  t} \int_{0}^{+\infty} \ a^{\delta-4} (\ell^a_{t,x} - \ell^0_{t,x}) \d a. \]
The same reasoning can be done for $\delta \in (0,1)$, yielding for that case
\[  \partial_{t} u = \frac{1}{2} \partial^{2}_{x} u + \xi + \frac{\kappa(\delta)}{2}  \frac{\partial}{\partial  t} \int_{0}^{+\infty} \ a^{\delta-4} \,\mathcal{T}^{\,2}_{a} \ell^{(\cdot)}_{t,x} \d a. \]
As for the critical case $\delta = 1$, as shown in Section \ref{sect_Dirichlet}, the dynamics is formally given by \eqref{formal1}, which we can rewrite using the local times as follows:
\[
 \partial_{t} u = \frac{1}{2} \partial^{2}_{x} u + \xi -  \frac{1}{8} \frac{\partial}{\partial t} \frac{\partial^2}{\partial a^{2}} \ell^{a}_{t,x} \biggr\rvert_{a=0}.
\] 
In all the SPDEs above, the unknown would be the couple $(u,\ell)$, where $u$ is a continuous nonnegative function on $\mathbb{R}_{+} \times (0,1)$, and, for all $x \in (0,1)$, $(\ell^{a}_{t}(x))_{a, t \geq 0}$ is a family of occupation times satisfying \eqref{otfo}.

These conjectures raise several problems. Indeed, assuming that the process $u$ can be constructed - as done above for the case $\delta=1$ - it is at present unknown whether a family of occupation times $\ell$ satisfying \eqref{otfo} should exist and, if it does, whether it has the requested differentiability property. Moreover, pathwise uniqueness for such equations is at present an open problem. 
For instance, due to the lack of monotonicity, the techniques used in \cite{nualart1992white} to define a solution to the stochastic heat equation with reflection would not be of any help. We stress that the analogous SDE case of Bessel processes of dimension $\delta \in (0,1)$ is also a problem of interest in itself; these processes are not semi-martingales, but nonetheless satisfy the stochastic equation
\[ X_{t} = x + \frac{\delta-1}{2} \int_{0}^{+\infty} a^{\delta-2} (\ell^{a}_{t} - \ell^{0}_{t}) \d a + B_{t}, \]
where $\left(\ell^{a}_{t}\right)_{a,t \geq 0}$ is the diffusion local times process of the Bessel process $(X_{t})_{t \geq 0}$ (see \cite{revuz2013continuous}, Chapter XI, ex. 1.26). Even in this one-dimensional context, the only known method for solving this equation is to consider $Y_t:=X_t^2$ and show pathwise uniqueness for $Y$; this method breaks down for SPDEs since the It\^o formula produces very complicated terms, see the discussion in the Introduction.

The Dirichlet form techniques used in Section \ref{sect_Dirichlet} above to construct $u$ in the case $\delta=1$ can also be applied successfully to treat the case $\delta=2$, see the forthcoming paper \cite{henri2018bessel}. However, for $\delta\in\,]0,3[\,\setminus\{1,2\}$, it is not even known whether the form which naturally generalizes $(\mathcal{E},\mathcal{F} \mathcal{C}^{\infty}_{b}(K))$ in Proposition \ref{closability} is closable and whether its closure is a quasi-regular Dirichlet form . 

We recall the main result of \cite{dalang2006hitting}: for all $\delta\geq 3$, we set
\[
\zeta(\delta):=\sup\{k\geq\N: \exists t>0, \, 0<x_1<\ldots<x_k<1, \, u(t,x_i)=0 \quad i=1,\ldots,k\},
\]
where $u$ is the solution to the $\delta$-Bessel SPDE \eqref{spde>3}-\eqref{spde=3}. Then we have
\begin{equation}\label{conj}
{\mathbb P}\left( \zeta(\delta)>\frac 4{\delta-2} \right)=0.
\end{equation}
In other words, a.s. $u$ hits the obstacle $0$ in at most $\lfloor\frac 4{\delta-2}\rfloor$ space points simultaneously in time. It is very tempting to conjecture that \eqref{conj} holds for all $\delta>2$
in other words, the $\delta$-Bessel SPDE would hit 0 at finitely many space points simultaneously in time for any $\delta>2$, but the number of such hitting points would tend to $+\infty$ as $\delta\downarrow 2$. The fact that $\delta=2$ is the critical value for this behaviour is clearly
related to the fact that $\delta=2$ is also the critical dimension for the probability that the
$\delta$-Bessel process or bridge hit 0.

The transition between $\delta\geq 3$ and $\delta<3$ is visible at the level of the invariant measure, namely the $\delta$-Bessel bridge, since in the former case the measure is log-concave, 
while this property is lost in the latter case. Therefore the techniques of \cite{ASZ} based on 
optimal transport and gradient flows in metric spaces fail for $\delta<3$. In the same vein, the Strong Feller property holds easily
for $\delta\geq 3$, while it is an open problem for $\delta<3$, again because the drift of the SPDE
becomes highly non-dissipative. Still, the recent paper \cite{Henri18} of the first author shows that
Bessel processes of dimension $\delta<1$ are Strong Feller even if their drift contains a renormalised local time. Moreover Tsatsoulis and Weber \cite{TW} have proved that the 2-dimensional stochastic quantization equation satisfies a Strong Feller property, although it is an equation which
needs renormalisation; also Hairer and Mattingly \cite{HM18} have proved the Strong Feller property for a large class of
equations with renormalised drifts. All this suggests that there may be hope that this technically very useful
property holds also for $\delta$-Bessel SPDEs with $\delta<3$.

\appendix

\section{Proofs of two technical results}\label{Proofs}
\begin{proof}[Proof of Proposition \ref{closability}]
Since $D(\Lambda)$ contains all globally Lipschitz functions on $H$, for all $f \in \mathcal{F} \mathcal{C}^{\infty}_{b}(K)$ we have $f \circ j \in D(\Lambda)$. 
A simple calculation shows that for any $f\in \mathcal{F} \mathcal{C}^{\infty}_{b}(K)$ of the form \eqref{Fexp} we have
\begin{equation}
\label{derivative_functional_abs_val}
\nabla (f\circ j)(z) = \nabla f (j(z)) \, \text{sgn}(z).
\end{equation}  
Hence, for all $f,g \in \mathcal{F} \mathcal{C}^{\infty}_{b}(K)$, we have 
\[ \begin{split}
\mathcal{E}(f,g)  &= \frac{1}{2} \int \langle \nabla f(x) , \nabla g(x) \rangle \d \nu(x) = \frac{1}{2} \int \langle \nabla f(j(z)) , \nabla g(j(z)) \rangle \d \mu (z) \\
&= \frac{1}{2} \int \langle \nabla (f \circ j)(z) , \nabla (g \circ j)(z) \rangle \d \mu (z) = \Lambda(f \circ j, g \circ j),
\end{split} \]
where the third equality follows from \eqref{derivative_functional_abs_val}. This shows that the bilinear symmetric form $(\mathcal{E},\mathcal{F} \mathcal{C}^{\infty}_{b}(K))$ admits as an extension the image of the Dirichlet form $(\Lambda, D(\Lambda))$ under the map $j$. Since $\mathcal{F} \mathcal{C}^{\infty}_{b}(K)$ is dense in $L^{2}(\nu)$, this extension is a Dirichlet form. In particular, $(\mathcal{E},\mathcal{F} \mathcal{C}^{\infty}_{b}(K))$ is closable, its closure $(\mathcal{E},D (\mathcal{E}))$ is a Dirichlet form, and we have the isometry property \eqref{isometry}.

There remains to prove that the Dirichlet form $(\mathcal{E},D (\mathcal{E}))$ is quasi-regular. Since it is the closure of $(\mathcal{E},\mathcal{F} \mathcal{C}^{\infty}_{b}(K))$, it suffices to show that the associated capacity is tight. Since $K$ is separable, we can find a countable dense subset $\{ y_{k}, \, k \in \mathbb{N} \} \subset K$ such that $y_k \neq 0$ for all $k \in \mathbb{N}$. 

Let now $\varphi \in C^{\infty}_{b}(\mathbb{R})$ be an increasing function such that $\varphi(t)=t$ for all $t \in [-1,1]$ and $\|\varphi'\|_{\infty} \leq 1$. For all $m \in \mathbb{N}$, we define the function $v_{m} : K \to \mathbb{R}$ by
\[ v_{m}(z) := \varphi(\|z-y_{m}\|), \quad z \in K.\]
Moreover, we set, for all $n \in \mathbb{N}$
\[w_{n}(z) := \underset{m \leq n}{\inf} v_{m}(z), \quad z \in K.\] 
We claim that $w_{n} \in D(\mathcal{E})$, $n \in \mathbb{N}$, and that $w_{n} \underset{n \to \infty}{\longrightarrow} 0$, $\mathcal{E}$ quasi-uniformly in $K$. Assuming this claim for the moment, for all $k \geq 1$ we can find a closed subset $F_{k}$ of $K$ such that $\text{Cap} (K \setminus F_{k}) < 1/k$, and $w_{n} \underset{n \to \infty}{\longrightarrow} 0$ uniformly on $F_{k}$. Hence, for all $\epsilon >0$, we can find $n \in \mathbb{N}$ such that $w_{n} < \epsilon$ on $F_{k}$. Therefore
\[ F_{k} \subset \underset{m \leq n}{\bigcup} B(y_{m}, \epsilon) \]
where $B(y, r)$ is the open ball in $K$ centered at $y \in K$ with radius $r >0$.
This shows that $F_{k}$ is totally bounded. Since it is, moreover, complete as a closed subspace of a complete metric space, it is compact, and the tightness of $\text{Cap}$ follows.

We now justify our claim. For all $i \in \mathbb{N}$, we set $l_i := \|y_i\| ^{-1} \, y_i$. Then for all $i \geq 1$, $l_{i} \in K$, $\|l_{i}\| = 1$ and, for all $z \in K$
\[ \|z\| = \underset{i \geq 0}{\sup} \, \langle l_{i}, z \rangle. \]
Let $m \in \mathbb{N}$ be fixed. For all $i \geq 0$, let
$u_{i}(z) := \underset{j \leq i}{\sup} \, \, \varphi( \, \langle l_{j}, z- y_{m} \rangle \, )$, $z \in K$.  
We have $u_{i} \in D(\mathcal{E})$, and, for $\nu$ - a.e. $z \in K$
\[\sum_{k=1}^{\infty} \frac{\partial u_{i}}{\partial e_{k}} (z) ^{2} \leq \underset{j \leq i}{\sup} \left(  \sum_{k=1}^{\infty} \varphi'(\langle l_{j}, z - y_{m} \rangle )^{2} \, \langle l_{j}, e_{k} \rangle ^{2} \right) \leq 1,  \]
whence $\mathcal{E}(u_{i}, u_{i})\leq 1$.
By the definition of $v_{m}$, as $i \to \infty$, $u_{i} \uparrow v_{m}$ on $K$, hence in $L^{2}(K, \nu)$. By  \cite[I.2.12]{ma2012introduction}, we deduce that $v_{m} \in D(\mathcal{E})$, and that
$
\mathcal{E}(v_{m}, v_{m}) \leq 1.
$    
Therefore, for all $n \in \mathbb{N}$, $w_{n} \in D(\mathcal{E})$, and
$ \mathcal{E}(w_{n}, w_{n}) \leq 1. 
$ 
But, since $\{ y_{k}, \, k \in \mathbb{N} \}$ is dense in $K$, as $n \to \infty$, $w_{n} \downarrow 0$ on $K$. Hence $w_{n} \underset{n \to \infty}{\longrightarrow} 0$ in $L^{2}(K, \nu)$. This and the previous bound imply, by \cite[I.2.12]{ma2012introduction}, that the Ces\`{a}ro means of some subsequence of $(w_{n})_{n \geq 0}$ converge  to $0$ in $D(\mathcal{E})$. By \cite[III.3.5]{ma2012introduction}, some subsequence thereof converges $\mathcal{E}$ quasi-uniformly to $0$. But, since $(w_{n})_{n \geq 0}$ is non-increasing, we deduce that it converges $\mathcal{E}$-quasi-uniformly to $0$. The claimed quasi-regularity follows. There finally remains to check that $(\mathcal{E}, D(\mathcal{E}))$ is local in the sense of Definition \cite[V.1.1]{ma2012introduction}. Let $u,v \in D(\mathcal{E})$ satisfying $\text{supp}(u) \cap \text{supp}(v) = \emptyset$. Then, $u \circ j$ and $v \circ j$ are two elements of $D(\Lambda)=W^{1,2}(\mu)$ with disjoint supports, and, recalling \eqref{isometry}, we have 
\[ \mathcal{E}(u,v) = \Lambda(u \circ j,v \circ j) = \frac{1}{2} \int_{H} \nabla (u \circ j) \cdot \nabla (v \circ j) \d \mu=0. \]
The claim follows.       
\end{proof}

\begin{proof}[Proof of Lemma \ref{density}]
Recall that $D(\mathcal{E})$ is the closure under the bilinear form $\mathcal{E}_{1}$
of the space $\mathcal{F} \mathcal{C}^{\infty}_{b}(K)$ of functionals of the form $F = \Phi \bigr \rvert_{K}$, where $\Phi \in \mathcal{F} \mathcal{C}^{\infty}_{b}(H)$. 
Therefore, to prove the claim, it suffices to show that for any functional $\Phi \in \mathcal{F} \mathcal{C}^{\infty}_{b}(H)$ and all $\epsilon>0$, there exists $\Psi \in \mathscr{S}$ such that $\mathcal{E}_1(\Phi-\Psi,\Phi-\Psi) < \epsilon $.

Let $\Phi \in \mathcal{F} \mathcal{C}^{\infty}_{b}(H)$. We set for all $\epsilon > 0$
\[ \Phi_{\epsilon}(\zeta) := \Phi(\sqrt{\zeta^{2} + \epsilon}), \quad \zeta \in H.  \] 
A simple calculation shows that $\Phi_{\epsilon} \underset{\epsilon \to 0}{\longrightarrow} \Phi$ 
and $\nabla \Phi_{\epsilon} \underset{\epsilon \to 0}{\longrightarrow} \nabla \Phi$ pointwise, with
uniform bounds $\|\Phi_{\epsilon}\|_{\infty} \leq \| \Phi \|_{\infty}$ and $ \| \nabla \Phi_{\epsilon} \|_{\infty} \leq  \| \nabla \Phi \|_{\infty}$. Hence, by dominated convergence, $\mathcal{E}_1 (\Phi_{\epsilon} - \Phi, \Phi_{\epsilon} - \Phi) \underset{\epsilon \to 0}{\longrightarrow} 0$. Then, introducing for all $d \geq 1$ $(\zeta^{d}_{i})_{1 \leq i \leq d}$ the orthonormal family in $L^{2}(0,1)$ given by
\[
 \zeta^{d}_{i} := \sqrt{d} \ \mathbf{1}_{[\frac{i-1}{d}, \frac{i}{d}[}, \quad i = 1, \ldots, d, 
\]
and setting
\[ \Phi^{d}_{\epsilon}(\zeta) :=  \Phi_{\epsilon} \left( \left( \sum_{i=1}^{d} \langle \zeta_{d,i}, \zeta^{2} \rangle  \right)^{\frac12} \right) = \Phi \left( \left( \sum_{i=1}^{d} \langle \zeta_{d,i}, \zeta^{2} \rangle   + \epsilon \right)^{\frac12} \right) , \quad \zeta \in H,  \]
again we obtain the convergence $\mathcal{E}_1(\Phi^{d}_{\epsilon} - \Phi_{\epsilon}, \Phi^{d}_{\epsilon} - \Phi_{\epsilon}) \underset{d \to \infty}{\longrightarrow} 0$.

There remains to show that any fixed functional of the form 
\[ \Phi (\zeta) = f\left( \langle \zeta_{1}, \zeta^{2} \rangle, \ldots, \langle \zeta_{d}, \zeta^{2} \rangle \right), \quad \zeta \in H \]
with $d \geq 1$, $f \in C^{1}_{b}(\mathbb{R}_{+}^{d})$, and $(\zeta_{i})_{i=1, \ldots, d}$  
a family of elements of $K$, 
can be approximated by elements of $\mathscr{S}$. Again by dominated convergence, we can suppose
that $f$ has compact support in $\mathbb{R}_{+}^{d}$. We define $g\in C^{1}_{b}([0,1]^{d})$,
\[ g(y) := f(-\ln(y_{1}), \cdots, -\ln(y_{d})), \quad y \in \,]0,1]^{d}, \] 
and $g(y):=0$ if $y_i=0$ for any $i=1,\ldots,d$. 
By a differentiable version of the Weierstrass Approximation Theorem (see Theorem 1.1.2 in \cite{llavona1986approximation}), there exists a sequence $(p_{k})_{k \geq 1}$ of polynomial functions converging to $g$ for the $C^{1}$ topology on $[0,1]^{d}$. Defining for all $k \geq 1$ the function $f_{k}: \mathbb{R}_{+}^{d} \to \mathbb{R}$ by
\[ f_{k}(x) = p_{k}(e^{-x_{1}}, \cdots, e^{-x_{d}}), \quad x \in \mathbb{R}_{+}^{d}, \]  
we define $\Phi_{k} \in \mathscr{S}$ by
\[ \Phi_{k} (\zeta) = f_{k} \left( \langle \zeta_{1}, \zeta^{2} \rangle, \ldots, \langle \zeta_{d}, \zeta^{2} \rangle \right), \quad \zeta \in H. \]
Since $p_{k} \underset{k \to \infty}{\longrightarrow} g$ for the $C^{1}$ topology on $[0,1]^{d}$, $f_{k} \underset{k \to \infty}{\longrightarrow} f$ uniformly on $\mathbb{R}_{+}^{d}$ together with its first order derivatives. Hence, it follows that $\Phi_{k} \underset{k \to \infty}{\longrightarrow} \Phi$ pointwise on $K$ together with its
gradient. It also follows that there is some $C>0$ such that for all $k \geq 1$
\[ \forall  \zeta \in K, \quad |\Phi_{k}(\zeta)|^{2} + \|\nabla \Phi_{k}(\zeta)\|^{2} \leq C(1+ \|\zeta \|^{2}). \]
Since the quantity in the right-hand side is $\nu$ integrable in $\zeta$, it follows by dominated convergence that $\mathcal{E}_1(\Phi_{k}-\Phi, \Phi_{k}-\Phi) \underset{k \to \infty}{\longrightarrow} 0$. This yields the claim.  
\end{proof}

\bibliographystyle{amsplain}
\bibliography{IBPF_Bessel_Revised}

\end{document}